\newtheorem{theorem}{Theorem}[section]
\newtheorem{prop}[theorem]{Proposition}
\newtheorem{lemma}[theorem]{Lemma}
\newtheorem{cor}[theorem]{Corollary}
\newtheorem{sett}[theorem]{Setting}
\newcommand{\eps}{\varepsilon}
\newcommand{\R}{\mathbb{R}}
\newcommand{\N}{\mathbb{N}}
\newcommand{\E}{\mathbb{E}}
\renewcommand{\P}{\mathbb{P}}
\newcommand{\1}{\mathbbm{1}}
\newcommand{\HS}{\operatorname{HS}}
\newcommand{\constFun}{\phi}
\newcommand{\Id}{\mathrm{Id}}
\newcommand{\F}[0]{\ensuremath{\mathcal{F}}}
\title{A stochastic Gronwall inequality
and\\
applications to moments,
strong completeness,\\
strong local Lipschitz continuity,
and perturbations}
\author{Anselm Hudde$^1$, Martin Hutzenthaler$^1$,
and Sara Mazzonetto$^{2}$ 
\bigskip
\\
\small{$^1$Faculty of Mathematics, University of Duisburg-Essen,
Germany}
\\
\small{$^2$Institute of Mathematics, University of Potsdam, 
Germany}
\smallskip
}
\begin{document}

\maketitle
\makeatletter
\let\@makefnmark\relax
\let\@thefnmark\relax
\@footnotetext{\emph{AMS 2010 subject classification:} 60H10; secondary: 60E15}
\@footnotetext{\emph{Key words and phrases:} Stochastic Gronwall inequality, stochastic Gronwall lemma, martingale inequality, exponential moments, strong completeness, strong local Lipschitz continuity, perturbation theory}
\makeatother

\begin{abstract}
  There are numerous applications of the classical
  (deterministic)
  Gronwall inequality.
  Recently, Michael Scheutzow discovered
  a stochastic Gronwall inequality
  which provides upper bounds for
  $p$-th moments, $p\in(0,1)$,
  of the supremum of nonnegative scalar continuous processes
  which satisfy a linear integral inequality.
  In this article we complement this with
  upper bounds for $p$-th moments, $p\in[2,\infty)$,
  of the supremum of general It\^o processes
  which satisfy a suitable one-sided affine-linear growth condition.
  As example applications, we improve known results
  on strong local Lipschitz continuity in the starting point of solutions
  of stochastic differential equations (SDEs),
  on (exponential) moment estimates for SDEs,
  on strong completeness of SDEs,
  and
  on perturbation estimates for SDEs.
\end{abstract}

\tableofcontents


\section{Introduction}
  Recently, Scheutzow \cite{Scheutzow2013}
  discovered
  a powerful stochastic Gronwall inequality.
More precisely, Scheutzow~\cite[Theorem 4]{Scheutzow2013}
proves that if
$Z,\alpha,\eta\colon[0,\infty)\times\Omega\to[0,\infty)$
are adapted processes on a filtered probability space $(\Omega,\mathcal{F},\P,(\mathbb{F}_t)_{t\in[0,\infty)})$
with continuous sample paths
and if $M\colon[0,\infty)\times\Omega\to\R$ is a continuous local martingale with $M_0=0$
which satisfy
that $\P$-a.s.\ it holds for all $t\in[0,\infty)$ that
\begin{equation}  \begin{split}\label{eq:Z.intro}
  Z_t\leq \int_0^t\alpha_s Z_s\,ds+M_t+\eta_t,
\end{split}     \end{equation}
then for all $t\in[0,\infty)$, $q_1,q_3\in(0,1)$, $q_2\in(0,\infty)$ with
$\tfrac{1}{q_1}=\tfrac{1}{q_2}+\tfrac{1}{q_3}$ it holds that
\begin{equation}  \begin{split}
     &\left\|\sup_{s\in[0,t]}Z_s\right\|_{L^{q_1}(\P;\R)}
     \leq
      \Big(\min\{4,\tfrac{1}{q_3}\} \tfrac{\pi q_3}{\sin(\pi q_3)}+1
      \Big)^{\frac{1}{q_3}}
     \bigg\|
     \exp\Big(\smallint_0^{ {t}} \alpha_u\,du \Big)
     \bigg\|_{L^{q_2}(\P;\R)}
     \left\|\sup_{s\in[0,t]}\eta_s \right\|_{L^{q_3}(\P;\R)}.
\end{split}     \end{equation}
In this article we complement
Scheutzow~\cite[Theorem 4]{Scheutzow2013}
with upper bounds for $p$-th moments, $p\in[2,\infty)$,
of general mulit-dimensional It\^o processes which satisfy
the \emph{one-sided affine-linear growth condition}~\eqref{eq:lin.growth.intro.thm}
below. Our contribution is as follows:
\begin{itemize}
  \item Marginal estimate~\eqref{eq:c:moments.hilbert.m.intro}:
      We observe that the proof of Hutzenthaler and Jentzen~\cite[Theorem 2.10]{HutzenthalerJentzen2014} transfers to the more general setting of Theorem~\ref{thm:moments:hilbert.m.intro}.
 The estimate resulting from this approach turns out to be suboptimal
   if $\beta$ in~\eqref{eq:lin.growth.intro.thm} is non-zero due to
   an estimate with Young's inequality.
   We show how to avoid the use of Young's inequality 
   by applying the Gronwall-Bellman-Opial inequality in Lemma~\ref{l:nonlinear.Gronwall}
   below.
  \item Uniform estimate~\eqref{eq:c:moments.hilbert.m.sup.intro}:
  We observe
  that 
   It\^o's formula applied 
    for fixed $p\in[2,\infty)$
   to
$(\|X_t\|_{\R^d}^p)_{t\in[0,\infty)}$
  together with the
{one-sided affine-linear growth condition}~\eqref{eq:lin.growth.intro.thm}
and Young's inequality results in inequality~\eqref{eq:Z.intro}
with $Z= (\|X_t\|_{\R^d}^p)_{t\in[0,\infty)}$;
see the proof of
Theorem~\ref{thm:moments:hilbert.m} below for details.
Thus
Theorem 4 in Scheutzow~\cite{Scheutzow2013}
   can be applied to $(\|X_t\|_{\R^d}^p)_{t\in[0,\infty)}$.
 The estimate resulting from this approach turns out to be suboptimal
   if $\beta$ in~\eqref{eq:lin.growth.intro.thm} is non-zero.
   We show how to avoid the use of Young's inequality and how
   to get a better upper bound.
 \item
   We observe that 
the {one-sided affine-linear growth condition}~\eqref{eq:lin.growth.intro.thm}
(or its analog~\eqref{eq:ass.powers2} for more general Lyapunov-type functions)
is satisfied in many applications; see Section~\ref{sec:applications}
for a few examples.
\end{itemize}
\begin{theorem}[A stochastic Gronwall inequality for multi-dimensional It\^o processes]\label{thm:moments:hilbert.m.intro}
Let
$d,m\in\N$,
$T \in (0, \infty)$,
$p\in[2,\infty)$,
let $\|\cdot\|_{\R^d}$, $\|\cdot\|_{\R^m}$, and $\langle \cdot,\cdot\rangle_{\R^d}$ denote Euclidean norm respectively scalar product,
let $\|\cdot\|_{\R^{d\times m}}\colon \R^{d\times m}\to[0,\infty)$
satisfy for all $A=(A_{i,j})_{i\in\{1,\ldots,d\},j\in\{1,\ldots,m\}}\in\R^{d\times m}$ that $\|A\|_{\R^{d\times m}}^2=\sum_{i=1}^d\sum_{j=1}^m|A_{i,j}|^2$,
let $(\Omega, \F, \P)$ be a probability space with a normal filtration $(\mathbb{F}_{t})_{t\in [0,T]}$,
let $W\colon[0,T]\times\Omega\to\R^m$ be a
standard Brownian motion,
let $X,a\colon[0,T]\times\Omega\to \R^d$,
$b\colon[0,T]\times\Omega\to
\R^{d\times m}$,
$\alpha,\beta \colon [0,T] \times \Omega \to [0,\infty)$
be $\mathcal{B}([0,T])\otimes\mathcal{F}$-measurable and adapted stochastic processes
which satisfy $\P$-a.s.\ that
$\int_0^{T}\|a_s\|_{\R^d}+\|b_s\|_{\R^{d\times m}}^2+|\alpha_s|\,ds<\infty$,
which satisfy that $X$ has continuous sample paths,
which satisfy for all $t\in[0,T]$
that it holds $\P$-a.s.\ that $X_t=X_0+\int_0^t a_s\,ds +\int_0^tb_s\,dW_s$,
and
which satisfy that $\P$-a.s.\ it holds for Lebesgue-almost all $t\in[0,{T}]$ that
\begin{equation}  \begin{split}\label{eq:lin.growth.intro.thm}
  \langle X_t,a_t\rangle_{\R^d}+\tfrac{1}{2}\|b_t\|_{\R^{d\times m}}^2
  +\tfrac{p-2}{2}\tfrac{\|\langle X_t,b_t\rangle_{\R^d}\|_{\R^m}^2}{\|X_t\|_{\R^d}^2}
  \leq \alpha_t\|X_t\|_{\R^d}^2+\tfrac{1}{2}|\beta_t|^2.
\end{split}     \end{equation}
 Then
   \begin{enumerate}[(i)]
     \item  
   it holds for all $q_1,q_2\in(0,\infty]$, $t\in[0,T]$
   with $\tfrac{1}{q_1}=\tfrac{1}{q_2}+\tfrac{1}{p}$ that
   \begin{equation}  \begin{split}\label{eq:c:moments.hilbert.m.intro}
     &\|X_{t}\|_{L^{q_1}(\P;\R^d)}
     \leq
     \bigg\|
     \exp\Big(\smallint_0^{ {t}} \alpha_u\,du \Big)
     \bigg\|_{L^{q_2}(\P;\R)}
     \left(
     \left\|
     X_0
     \right\|_{L^p(\P;\R^d)}^2
     +\int_0^t
     \Big\|
     \tfrac{ \beta_s }
     {\exp\left(\smallint_0^s \alpha_u\,du \right)}
     \Big\|_{L^{p}(\P;\R)}^2\,ds
     \right)^{\!\frac{1}{2}}
   \end{split}     \end{equation}
   and
   \item
   it holds for all $q_1,q_2,q_3\in(0,\infty]$
   with $q_3< p$ and $\tfrac{1}{q_1}=\tfrac{1}{q_2}+\tfrac{1}{q_3}$ that
   \begin{align}\label{eq:c:moments.hilbert.m.sup.intro}
     &\left\|\sup_{s\in[0,T]}\|X_s\|_{\R^d}\right\|_{L^{q_1}(\P;\R)}
     \leq
      \bigg(\tfrac{p}{q_3}\smallint_{\frac{p-q_3}{q_3}}^{\infty}\tfrac{s^{\frac{q_3}{p}}}{(s+1)^2}\,ds+1\bigg)^{\!\frac{p}{2q_3}}
     \\&
     \cdot
     \bigg\|
     \exp\Big(\smallint_0^{ {T}} \alpha_u\,du \Big)
     \bigg\|_{L^{q_2}(\P;\R)}
     \left\|
     \left(
     \|X_0\|_{\R^d}^2
     +\int_0^{T} \Big|
     \tfrac{ \beta_s }
     {\exp\left(\smallint_0^s \alpha_u\,du \right)}
     \Big|^2
    \,ds
     \right)^{\!\frac{1}{2}}
     \right\|_{L^{q_3}(\P;\R)}.
   \end{align}
   \end{enumerate}
\end{theorem}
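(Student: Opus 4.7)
My strategy for both parts is to apply It\^o's formula to an appropriate power of $\|X_t\|_{\R^d}$, introduce an integrating factor determined by $\alpha$, and then invoke the Gronwall--Bellman--Opial inequality (Lemma~\ref{l:nonlinear.Gronwall}) for part~(i) and a sharpened Scheutzow-type maximal inequality for part~(ii). In both cases the point is to avoid Young's inequality, which---as the introduction discusses---would lead to suboptimal constants.

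For part~(i), I would apply It\^o's formula to the smoothed function $(\|X_t\|_{\R^d}^2+\varepsilon)^{p/2}$ and send $\varepsilon\downarrow 0$; by~\eqref{eq:lin.growth.intro.thm} the drift is dominated by $p\alpha_t\|X_t\|_{\R^d}^p+\tfrac{p}{2}|\beta_t|^2\|X_t\|_{\R^d}^{p-2}$. Setting $A_t:=\int_0^t\alpha_s\,ds$ and $Y_t:=e^{-pA_t}\|X_t\|_{\R^d}^p$, localization and expectation yield
\begin{equation*}
\E[Y_t] \le \E[Y_0] + \tfrac{p}{2}\int_0^t \E\bigl[(|\beta_s|e^{-A_s})^2\, Y_s^{(p-2)/p}\bigr]\,ds.
\end{equation*}
H\"older's inequality with conjugate exponents $p/2$ and $p/(p-2)$ inside the integrand converts this into the Bellman-type scalar inequality $\phi(t)\le\phi(0)+\tfrac{p}{2}\int_0^t v(s)\,\phi(s)^{(p-2)/p}\,ds$ with $\phi(t):=\E[Y_t]$ and $v(s):=\E[|\beta_s e^{-A_s}|^p]^{2/p}$. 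This is exactly the hypothesis of Lemma~\ref{l:nonlinear.Gronwall}, whose conclusion reads $\phi(t)^{2/p}\le\phi(0)^{2/p}+\int_0^t v(s)\,ds$, i.e.\ $\|e^{-A_t}X_t\|_{L^p(\P;\R^d)}^2\le\|X_0\|_{L^p(\P;\R^d)}^2+\int_0^t\|e^{-A_s}\beta_s\|_{L^p(\P;\R)}^2\,ds$. A final H\"older split with $\tfrac1{q_1}=\tfrac1{q_2}+\tfrac1p$ applied to the factorization $\|X_t\|_{\R^d}=e^{A_t}\cdot(e^{-A_t}\|X_t\|_{\R^d})$ then rearranges this into~\eqref{eq:c:moments.hilbert.m.intro}.

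For part~(ii), the $p$-version of~\eqref{eq:lin.growth.intro.thm} implies its $p=2$ version (the omitted term being non-negative for $p\ge 2$), so It\^o's formula applied to $\|X_t\|_{\R^d}^2$ combined with the integrating factor $e^{-2A_t}$ gives
\begin{equation*}
V_t:=e^{-2A_t}\|X_t\|_{\R^d}^2 \le V_0+\int_0^t(|\beta_s|e^{-A_s})^2\,ds+N_t,
\end{equation*}
where $N$ is a continuous local martingale with $N_0=0$. Since $A$ is non-decreasing, $\sup_{t\le T}\|X_t\|_{\R^d}\le e^{A_T}\sup_{t\le T}V_t^{1/2}$, so H\"older with $\tfrac1{q_1}=\tfrac1{q_2}+\tfrac1{q_3}$ reduces the claim to a maximal estimate of the form $\|\sup_{t\le T}V_t\|_{L^{q_3/2}(\P;\R)}\le C_{p,q_3}\,\|V_0+\int_0^T(|\beta_s|e^{-A_s})^2\,ds\|_{L^{q_3/2}(\P;\R)}$ valid in the regime $q_3<p$. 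I would prove this by adapting Scheutzow's stopping-time strategy with $\sigma_\lambda:=\inf\{t:V_t\ge\lambda\}$, but extend the Scheutzow range $q_3<2$ up to $q_3<p$ by also invoking It\^o's formula for $V_t^{p/2}$ together with the \emph{full} condition~\eqref{eq:lin.growth.intro.thm} (the convexity of $x\mapsto x^{p/2}$ for $p\ge 2$ supplies the extra quadratic-variation control). The explicit constant $\bigl(\tfrac{p}{q_3}\int_{(p-q_3)/q_3}^\infty s^{q_3/p}/(s+1)^2\,ds+1\bigr)^{p/(2q_3)}$ then arises from optimizing over the level $\lambda$ and reduces to Scheutzow's $\pi q_3/\sin(\pi q_3)$-constant in the case $p=2$, $q_3<1$.

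The main obstacle is precisely the maximal inequality in part~(ii) reaching $q_3<p$ instead of $q_3<2$: a direct application of Scheutzow's Theorem~4 either to $V_t$ (which only admits $q_3<2$) or to $\|X_t\|_{\R^d}^p$ after Young's inequality (which spoils the dependence on $\beta$) would be suboptimal, so weaving the full power-$p$ structure of~\eqref{eq:lin.growth.intro.thm} into the Scheutzow-style optional stopping argument is the technically most delicate step of the proof.
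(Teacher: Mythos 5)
Part~(i) of your proposal is essentially the paper's argument: smoothed $p$-th power It\^o, H\"older inside the time integral to reduce to a scalar Bellman--Opial inequality, Lemma~\ref{l:nonlinear.Gronwall}, and a final H\"older split across $\Omega$.

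For part~(ii) there is a genuine gap. Your starting point is the $p=2$ It\^o inequality $V_t\le V_0+\int_0^t(\beta_s e^{-A_s})^2\,ds+N_t$, and you correctly observe that applying Scheutzow's Theorem~4 to this only covers $q_3<2$. But the proposed fix -- ``adapting Scheutzow's stopping-time strategy'' while ``also invoking It\^o's formula for $V_t^{p/2}$'' -- names the obstacle without supplying a mechanism. Once you do apply It\^o to $(\eps+\|X_t\|_{\R^d}^2)^{p/2}e^{-pA_t}$ and use the full condition~\eqref{eq:lin.growth.intro.thm}, the non-martingale drift becomes (up to constants) $\int_0^t(\eps+\|X_s\|_{\R^d}^2)^{\frac{p}{2}-1}\beta_s^2\,e^{-pA_s}\,ds$, which is \emph{not} of the Scheutzow form ``local martingale plus a nondecreasing process free of $X$'' because of the factor $(\eps+\|X_s\|_{\R^d}^2)^{\frac{p}{2}-1}$. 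Removing that factor with Young's inequality returns you to precisely the suboptimal regime the theorem is meant to avoid, and optimizing a stopping level $\lambda$ does not remove it either. The device that actually closes the loop -- which your outline omits -- is the step~\eqref{eq:sup.trick} in the proof of Theorem~\ref{thm:moments:hilbert.m}: one first invokes the one-sided maximal inequality of Ba\~nuelos--Os\c{e}kowski~\cite[Theorem 1.4]{BanuelosOsekowski2014} at exponent $q=q_3/p<1$ (this is where the restriction $q_3<p$ originates, with the explicit constant coming from identity~\eqref{eq:optimal.constants}), and then one factors the offending drift as a product of $\sup_s\bigl((\eps+V(s,X_s))/\exp(\smallint_0^s\alpha_u\,du)\bigr)^{p-1}$ with $\eps+V(0,X_0)+\int_0^\tau\beta_s/\exp(\smallint_0^s\alpha_u\,du)\,ds$, applies H\"older over $\Omega$ with exponents $\tfrac{p}{p-1}$ and $p$, and finally \emph{divides both sides} by the resulting supremum moment, which the $\tau_n$-localization guarantees is finite and positive. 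It is this self-referential division that lets $\beta^2$, rather than a higher power of $\beta$, appear in~\eqref{eq:c:moments.hilbert.m.sup.intro}. Incidentally, since the paper uses the Ba\~nuelos--Os\c{e}kowski constant rather than the non-optimal $\min\{4,\tfrac1{q_3}\}\tfrac{\pi q_3}{\sin(\pi q_3)}$ of Scheutzow, your remark that the constant ``reduces to Scheutzow's'' for $p=2$, $q_3<1$ is not correct.
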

\noindent
Theorem~\ref{thm:moments:hilbert.m.intro}
is an immediate consequence of
Corollary~\ref{c:moments:hilbert.m} below and the proof of
Theorem~\ref{thm:moments:hilbert.m.intro} is therefore omitted.
Corollary~\ref{c:moments:hilbert.m} follows from
Theorem~\ref{thm:moments:hilbert.m} below
which is
the main result of this article and which proves $L^p$-estimates for $p\in[1,\infty)$
of more general functions of It\^o processes in a separable Hilbert space.
Since 
Theorem~\ref{thm:moments:hilbert.m} below essentially generalizes both the differential form of the Gronwall inequality
and Lyapunov's second method for stability of equilibria of ordinary differential equations,
we refer to
Theorem~\ref{thm:moments:hilbert.m} below
as a \textit{stochastic Gronwall-Lyapunov inequality}.

In the literature, there are numerous results which assume that
$\alpha$ in
condition~\eqref{eq:lin.growth.intro.thm} is deterministic.
In that case one can first take $L^p$-norm, $p\in[1,\infty)$ and then
apply the classical Gronwall inequality. This approach imposes strong
assumptions (e.g.,  global monotonicity which is
condition~\eqref{eq:ass.41} in the special case $p=2$, $V_0\equiv 0$, $\bar{V}\equiv 0$)
on the problem under consideration which are not
satisfied by numerous interesting
stochastic differential equations.
To illustrate the power of
the stochastic Gronwall-Lyapunov inequality in
Theorem~\ref{thm:moments:hilbert.m} below,
we
discuss in Section~\ref{sec:applications}
the impact of
Theorem~\ref{thm:moments:hilbert.m}
on the following problems:
\begin{enumerate}[(i)]
  \item (Exponential) moment estimates for stochastic differential equations (SDEs); see Subsection~\ref{sec:moments}
  and Subsection~\ref{sec:exp.moments}.
  \item Strong local Lipschitz continuity in the initial value;
  see Subsection~\ref{sec:localLip}.
  \item Strong completeness of SDEs;
  see Subsection~\ref{sec:completeness}.
  \item Perturbation estimates for SDEs;
  see Subsection~\ref{sec:perturbation}.
\end{enumerate}
In particular, we considerably improve existing results in the literature on these applications;
see Section~\ref{sec:applications} below for details.
Moreover, in the subsequent article Hudde et al.~\cite{HuddeHutzenthalerMazzonetto2019b} we apply
Theorem~\ref{thm:moments:hilbert.m}
and
Corollary~\ref{c:moments:hilbert.m}
to derive versions of solutions of SDEs
which are twice continuously differentiable in the initial value without
assuming the coefficients of the SDE to satisfy a global monotonicity condition.

\subsection{Notation}
\label{sec:notation}

Throughout this article we frequently use the following notation.
For every topological space $(E,\mathcal E)$ 
we denote by $\mathcal{B}(E)$ the Borel-sigma-algebra on  $(E,\mathcal E)$.
For all measurable spaces $(A,\mathcal{A})$ and $(B,\mathcal{B})$
we denote by $\mathcal{M}(\mathcal A,\mathcal B)$ the set of $\mathcal{A}$/$\mathcal{B}$-measurable functions from $A$ to $B$.
For every probability space $(\Omega,\mathcal{A},\P)$, real number $p\in (0,\infty]$, and normed vector space $(V,\|\cdot\|_V)$
we denote by
$\left\|\cdot\right\|_{L^p(\P;V)}\colon\mathcal{M}(\mathcal{A},\mathcal{B}(V))\to[0,\infty]$ the function that
 satisfies for all
$X\in\mathcal{M}(\mathcal{A},\mathcal{B}(V))$ that
$\|X\|_{L^p(\P;V)}=\left(\E\!\left[\|X\|_V^p\right]\right)^{\!\nicefrac1p}$ 
if $p<\infty$ and 
$\|X\|_{L^\infty(\P;V)}
= \inf\{ c \in [0,\infty) \colon \|X\|_V \leq c \ \P\text{-a.s.}\}$
otherwise.
For every $a\in(0,\infty)$ we denote by $\tfrac{a}{0}$
and $\infty^{a}$ the extended real numbers
given by
$\tfrac{a}{0}=\infty$
and $\infty^{a}=\infty$.
 We denote by
 $\tfrac{0}{0}$, $0\cdot\infty$, and  $0^0$
 the extended real numbers given by
 $\tfrac{0}{0}=0$, $0\cdot\infty=0$, and  $0^0=1$.
%
%
%
%
%
%
%
%
%
%
%
For two separable $\R$-Hilbert spaces $( H, \left< \cdot , \cdot \right>_H, \left\| \cdot \right\|_H )$ and $( U, \left< \cdot , \cdot \right>_U, \left\| \cdot \right\|_U )$
and an orthonormal basis $\mathbb{U}$ of $( U, \left< \cdot , \cdot \right>_U, \left\| \cdot \right\|_U )$
we denote $L(U,H)$ the set of continuous linear functions,
by $\|\cdot\|_{\HS(U,H)}\colon L(U,H)\to[0,\infty]$ the function satisfying for all $A\in L(U,H)$
that $\|A\|_{\HS(U,H)}^2=\sum_{u\in\mathbb{U}}\|Au\|_H^2$,
and by $\HS(U,H)$ the set $\HS(U,H)=\{A\in L(U,H)\colon\|A\|_{\HS(U,H)}<\infty\}$.
%
Stochastic integrals with respect to Wiener processes over product measurable and adapted
integrands are defined, e.g., in Weiz\"acker \& Winkler~\cite[Definition 6.3.4]{WeizaeckerWinkler1990}.


\section{A stochastic Gronwall-Lyapunov inequality}
In this section we derive the main result of this paper:
the stochastic Gronwall-Lyapunov inequality
in Theorem~\ref{thm:moments:hilbert.m} below.
First we prove in Lemma~\ref{l:integrating.factor.V}
an almost sure identity for functions of It\^o processes with an
exponential integrating factor.
Moreover, in Lemma~\ref{l:nonlinear.Gronwall}
 we provide an analog of Gronwall's inequality
where the exponential function is replaced by a monomial.
Throughout this section we use the notation from Subsection~\ref{sec:notation}.
\begin{sett} \label{sett:moments.m}
Let
$( H, \left< \cdot , \cdot \right>_H, \left\| \cdot \right\|_H )$
and
$( U, \left< \cdot , \cdot \right>_U, \left\| \cdot \right\|_U )$
be
separable
$\R$-Hilbert spaces,
let 
$T \in (0, \infty)$,
let $(\Omega, \F, \P)$ be a probability space with a normal filtration $(\mathbb{F}_{t})_{t\in [0,T]}$,
let $(W_t)_{t \in [0, T]}$ be an
$\Id_U$-cylindrical $(\mathbb{F}_t)_{t\in[0,T]}$-Wiener process,
let $O\subseteq H$ be an open set,
let $\tau\colon\Omega\to[0,T]$ be a stopping time,
let $X\colon[0,T]\times\Omega\to O$,
$a\colon[0,T]\times\Omega\to H$,
$b\colon[0,T]\times\Omega\to \HS(U,H)$
be $\mathcal{B}([0,T])\otimes\mathcal{F}$-measurable and adapted stochastic processes which satisfy that it holds
$\P$-a.s.\ that $\int_0^{\tau}\|a_s\|_H+\|b_s\|_{\HS(U,H)}^2\,ds<\infty$,
which satisfy that $X$ has continuous sample paths,
and which satisfy that for all $t\in[0,T]$ it holds $\P$-a.s.\ that $X_{\min\{t,\tau\}}=X_0+\int_0^t\1_{[0,\tau]}(s) a_s\,ds +\int_0^t\1_{[0,\tau]}(s)b_s\,dW_s$.
\end{sett}

\subsection{Almost sure identity with an exponential integrating factor}\label{ssec:exp.factor}

The following lemma, Lemma~\ref{l:integrating.factor.V}, slightly
generalizes
Lemma 2.1 in Hutzenthaler \& Jentzen~\cite{HutzenthalerJentzen2014}
to time-dependent test functions.
\begin{lemma}[Exponential integrating factor]\label{l:integrating.factor.V}
   Assume Setting~\ref{sett:moments.m},
   let $ V=(V(t,x))_{t\in[0,T],x\in O} \in C^{ 1,2 }( [0,T]\times O, \R )$,
   and let
   $\chi\colon[0,T]\times\Omega\to\R\cup\{-\infty,\infty\}$,
   $\eta\colon[0,T]\times\Omega\to\HS(U,\R)$
   be $\mathcal{B}([0,T])\otimes\mathcal{F}$-measurable and adapted stochastic processes which satisfy
   $\P$-a.s.\ that $\int_0^{\tau}|\chi_s|+\|\eta_s\|_{\HS(U,\R)}^2\,ds<\infty$.
   Then it holds for all $t\in[0,T]$ that $\P$-a.s.
   \begin{equation}  \begin{split} \label{eq:integrating.factor.V}
     &\tfrac{V({\min\{t,\tau\}},X_{\min\{t,\tau\}})}
     {\exp\left(\smallint_0^{\min\{t,\tau\}} \chi_r-\frac{1}{2}\|\eta_r\|_{\HS(U,\R)}^2\,dr
     +\smallint_0^t\1_{[0,\tau]}(r)\eta_r\,dW_r\right)}
     \\&
     =V(0,X_0)+\int_0^t\1_{[0,\tau]}(s)\tfrac{(\frac{\partial}{\partial x}V)(s,X_s)b_s-V(s,X_s)\eta_s}
     {\exp\left(\smallint_0^s \chi_r-\frac{1}{2}\|\eta_r\|_{\HS(U,\R)}^2\,dr
     +\smallint_0^s\eta_r\,dW_r\right)}
     \,dW_s
     \\&
     +\int_0^{\tau}\tfrac{(\frac{\partial}{\partial s}V)(s,X_s)+
     (\frac{\partial}{\partial x}V)(s,X_s)a_s+\frac{1}{2}\textup{trace}\left(b_s^{*}(\textup{Hess}_xV)(s,X_s)b_s\right)
     +\textup{trace}\left(\eta_s^{*}\left[V(s,X_s)\eta_s-(\frac{\partial}{\partial x}V)(s,X_s)b_s\right]\right)- V(s,X_s)\chi_s}
     {\exp\left(\smallint_0^s \chi_r-\frac{1}{2}\|\eta_r\|_{\HS(U,\R)}^2\,dr
     +\smallint_0^s\eta_r\,dW_r\right)}
     \,ds.
   \end{split}     \end{equation}
\end{lemma}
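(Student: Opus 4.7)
The plan is to apply the It\^o product rule to the quotient $Y_t := V(\min\{t,\tau\}, X_{\min\{t,\tau\}})\cdot E_t^{-1}$, where $E_t$ denotes the stopped exponential integrating factor
\[
E_t := \exp\!\left(\smallint_0^{\min\{t,\tau\}}\!\big(\chi_r - \tfrac{1}{2}\|\eta_r\|_{\HS(U,\R)}^2\big)dr + \smallint_0^t\1_{[0,\tau]}(r)\eta_r\,dW_r\right),
\]
which is exactly the denominator appearing on the left-hand side of~\eqref{eq:integrating.factor.V}. Applying It\^o's formula successively to $\exp$ and to $x\mapsto 1/x$ delivers $dE_t = \1_{[0,\tau]}(t)E_t(\chi_t\,dt+\eta_t\,dW_t)$ and, after the standard cancellation of cross terms,
\[
d(E_t^{-1}) = \1_{[0,\tau]}(t)E_t^{-1}\big[(-\chi_t + \|\eta_t\|_{\HS(U,\R)}^2)\,dt - \eta_t\,dW_t\big].
\]

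Next, since Setting~\ref{sett:moments.m} guarantees that $X_{\min\{\cdot,\tau\}}$ is an It\^o process in $O$ with drift $\1_{[0,\tau]}(\cdot)a$ and diffusion $\1_{[0,\tau]}(\cdot)b$, I would view $\big(\min\{t,\tau\}, X_{\min\{t,\tau\}}\big)$ as an $\R\times H$-valued semimartingale with drift $(\1_{[0,\tau]}(t), \1_{[0,\tau]}(t)a_t)$ and zero diffusion in the time coordinate, and apply the standard time-dependent It\^o formula to the $C^{1,2}$ function $V$ to obtain
\begin{align*}
dV(\min\{t,\tau\}, X_{\min\{t,\tau\}}) ={}& \1_{[0,\tau]}(t)\big[(\tfrac{\partial}{\partial s}V)(t,X_t) + (\tfrac{\partial}{\partial x}V)(t,X_t)a_t + \tfrac{1}{2}\mathrm{tr}(b_t^{*}(\mathrm{Hess}_x V)(t,X_t)b_t)\big]\,dt\\
&+ \1_{[0,\tau]}(t)(\tfrac{\partial}{\partial x}V)(t,X_t)b_t\,dW_t.
\end{align*}
The indicator in front of $(\partial_s V)$ reflects that the left-hand side is constant in $t$ once $t$ exceeds $\tau$.

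Finally, by the It\^o product rule, the Hilbert--Schmidt covariation between the stochastic-integral parts of $V(\min\{\cdot,\tau\}, X_{\min\{\cdot,\tau\}})$ and $E_\cdot^{-1}$ produces
\[
-\1_{[0,\tau]}(t)E_t^{-1}\mathrm{tr}\!\big(\eta_t^{*}(\tfrac{\partial}{\partial x}V)(t,X_t)b_t\big)\,dt,
\]
which combines with the $V(t,X_t)\|\eta_t\|_{\HS(U,\R)}^2$ term inherited from $d(E_t^{-1})$ into the expression $\mathrm{tr}(\eta_t^{*}[V(t,X_t)\eta_t - (\tfrac{\partial}{\partial x}V)(t,X_t)b_t])$, using the identity $V(t,X_t)\|\eta_t\|_{\HS(U,\R)}^2 = \mathrm{tr}(\eta_t^{*}V(t,X_t)\eta_t)$. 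Collecting the drift terms and noting $\int_0^t \1_{[0,\tau]}(s)f_s\,ds = \int_0^\tau f_s\,ds$ then yields~\eqref{eq:integrating.factor.V}. The only real obstacle I foresee is careful bookkeeping: correctly treating the stopping inside \emph{both} arguments of $V$ (so that one ends up with $V(\min\{t,\tau\}, X_{\min\{t,\tau\}})$ rather than $V(t, X_{\min\{t,\tau\}})$) and correctly interpreting the covariation via the trace pairing on $\HS(U,\R)$.
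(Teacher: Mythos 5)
Your proposal is correct and follows essentially the same route as the paper's proof: both apply It\^o's formula (you unfold it as the product rule applied to $V(\min\{\cdot,\tau\},X_{\min\{\cdot,\tau\}})\cdot E_\cdot^{-1}$, the paper invokes It\^o's formula in one step) to reach the same pre-rearranged display, and then both use the trace identity $V\|\eta\|_{\HS(U,\R)}^2-\operatorname{trace}(\eta^*(\tfrac{\partial}{\partial x}V)b)=\operatorname{trace}(\eta^*[V\eta-(\tfrac{\partial}{\partial x}V)b])$ to obtain~\eqref{eq:integrating.factor.V}. One small imprecision: the identity you quote at the end should read $\int_0^t\1_{[0,\tau]}(s)f_s\,ds=\int_0^{\min\{t,\tau\}}f_s\,ds$ (not $\int_0^{\tau}f_s\,ds$), which is what the lemma's $\int_0^{\tau}$ is implicitly shorthand for since the left-hand side of~\eqref{eq:integrating.factor.V} depends on $t$.
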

\begin{proof}
  It\^o's formula
  proves that for all $t\in[0,T]$  it holds $\P$-a.s.\ that
  \begin{equation}  \begin{split}
     &\tfrac{V({\min\{t,\tau\}},X_{\min\{t,\tau\}})}
     {\exp\left(\smallint_0^t\1_{[0,\tau]}(r)\big( \chi_r-\frac{1}{2}\|\eta_r\|_{\HS(U,\R)}^2\big)\,dr
     +\smallint_0^t\1_{[0,\tau]}(r)\eta_r\,dW_r\right)}
     \\&
     =V(0,X_0)+\int_0^t\1_{[0,\tau]}(s)\tfrac{(\frac{\partial}{\partial x}V)(s,X_s)b_s-V(s,X_s)\eta_s}
     {\exp\left(\smallint_0^s \chi_r-\frac{1}{2}\|\eta_r\|_{\HS(U,\R)}^2\,dr
     +\smallint_0^s\eta_r\,dW_r\right)}
     \,dW_s
     \\&
     +\int_0^t\1_{[0,\tau]}(s)\tfrac{(\frac{\partial}{\partial s}V)(s,X_s)+
     (\frac{\partial}{\partial x}V)(s,X_s)a_s
     -V(s,X_s)\left[\chi_s-\frac{1}{2}\|\eta_s\|_{\HS(U,\R)}^2\right]
     +\frac{1}{2}\textup{trace}\left(b_s^{*}(\textup{Hess}_xV)(s,X_s)b_s\right)
     }
     {\exp\left(\smallint_0^s \chi_r-\frac{1}{2}\|\eta_r\|_{\HS(U,\R)}^2\,dr
     +\smallint_0^s\eta_r\,dW_r\right)}
     \,ds
     \\&
     +\int_0^t\1_{[0,\tau]}(s)\tfrac{\frac{1}{2}V(s,X_s)\|\eta_s\|_{\HS(U,\R)}^2
     -
     \textup{trace}\left(\eta_s^{*}
     (\frac{\partial}{\partial x}V)(s,X_s)b_s\right)
     }
     {\exp\left(\smallint_0^s \chi_r-\frac{1}{2}\|\eta_r\|_{\HS(U,\R)}^2\,dr
     +\smallint_0^s\eta_r\,dW_r\right)}
     \,ds.
  \end{split}     \end{equation}
  Combining this with the fact that for all $s\in[0,T]$ it holds that
  \begin{equation}  \begin{split}
    &V(s,X_s)\|\eta_s\|_{\HS(U,\R)}^2
    -\operatorname{trace}\left(\eta_s^{*}(\tfrac{\partial}{\partial x}V)(s,X_s)b_s\right)
    \\&
    =
    \operatorname{trace}\left(\eta_s^{*}V(s,X_s)\eta_s\right)
    -\operatorname{trace}\left(\eta_s^{*}(\tfrac{\partial}{\partial x}V)(s,X_s)b_s\right)
    \\&
    =
    \operatorname{trace}\left(\eta_s^{*}\left[V(s,X_s)\eta_s-
    (\tfrac{\partial}{\partial x}V)(s,X_s)b_s\right]\right)
  \end{split}     \end{equation}
  proves~\eqref{eq:integrating.factor.V}
  and finishes the proof of Lemma~\ref{l:integrating.factor.V}.
\end{proof}
\subsection{A nonlinear Gronwall-Bellman-Opial inequality}\label{ssec:nonlinear.G}
The following Gronwall-Bellman-Opial lemma
(see Opial~\cite{Opial1957})
is well-known under additional continuity assumptions;
see, e.g., Beesack~\cite{Beesack1977} or Hutzenthaler \& Jentzen~\cite[Lemma 2.11]{HutzenthalerJentzen2015}.
\begin{lemma}\label{l:nonlinear.Gronwall}
  Let $t\in[0,\infty)$, $p\in(1,\infty)$, let $x,\beta\colon[0,t]\to[0,\infty]$
  be Borel-measurable functions,
  and assume for all $s\in[0,t]$ that
  \begin{equation}  \begin{split}\label{eq:nonlinear.Gronwall.ass}
    (x_s)^p\leq (x_0)^p+p\int_0^s (x_r)^{p-1}\beta_r\,dr<\infty.
  \end{split}     \end{equation}
  Then it holds that
  \begin{equation}  \begin{split}
    x_t\leq x_0+\int_0^t\beta_r\,dr.
  \end{split}     \end{equation}
\end{lemma}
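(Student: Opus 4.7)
The plan is to upgrade the $p$-th power bound \eqref{eq:nonlinear.Gronwall.ass} to a linear bound by integrating a chain-rule estimate. First, I would set
\begin{equation*}
z_s := (x_0)^p + p\int_0^s (x_r)^{p-1}\beta_r\,dr, \qquad s\in[0,t].
\end{equation*}
Hypothesis \eqref{eq:nonlinear.Gronwall.ass} guarantees $z_t<\infty$, so $r\mapsto(x_r)^{p-1}\beta_r$ lies in $L^1([0,t])$ and $z$ is absolutely continuous with Lebesgue-a.e.\ derivative $z'_s = p(x_s)^{p-1}\beta_s$. The assumption also gives $(x_s)^p\leq z_s$ for every $s\in[0,t]$.

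Next, to sidestep the singularity of $u\mapsto u^{1/p}$ at $u=0$, I would regularize: for fixed $\epsilon>0$ consider $f_\epsilon(s) := (z_s+\epsilon)^{1/p}$. Since $u\mapsto(u+\epsilon)^{1/p}$ is $C^1$ and Lipschitz on the bounded range of $z$, the composition $f_\epsilon$ is absolutely continuous with
\begin{equation*}
f'_\epsilon(s) \;=\; \tfrac{1}{p}(z_s+\epsilon)^{\frac{1}{p}-1}\cdot p(x_s)^{p-1}\beta_s \;=\; (z_s+\epsilon)^{\frac{1-p}{p}}(x_s)^{p-1}\beta_s
\end{equation*}
for a.e.\ $s\in[0,t]$. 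Since $p>1$ the map $u\mapsto u^{p-1}$ is monotone, so $(x_s)^p\leq z_s\leq z_s+\epsilon$ yields $(x_s)^{p-1}\leq(z_s+\epsilon)^{(p-1)/p}$, hence $f'_\epsilon(s)\leq\beta_s$ a.e.

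Integrating this a.e.\ inequality via the fundamental theorem of calculus for absolutely continuous functions gives
\begin{equation*}
(z_t+\epsilon)^{1/p} \;=\; f_\epsilon(t) \;\leq\; f_\epsilon(0) + \int_0^t \beta_r\,dr \;=\; \bigl((x_0)^p+\epsilon\bigr)^{1/p} + \int_0^t \beta_r\,dr.
\end{equation*}
Combined with $x_t\leq z_t^{1/p}\leq(z_t+\epsilon)^{1/p}$ and sending $\epsilon\searrow 0$, this yields the claim $x_t\leq x_0+\int_0^t\beta_r\,dr$. The main obstacle I anticipate is the possibility that $x_r$ or $z_s$ vanishes on a non-null set, which would render a direct differentiation of $z_s^{1/p}$ singular; the $\epsilon$-regularization above resolves this uniformly on $[0,t]$. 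A minor secondary point is the (standard) justification of the chain rule for the composition $\phi\circ z$ with $\phi\in C^1$ and $z$ absolutely continuous, which is automatic once we know $z$ is bounded on $[0,t]$.
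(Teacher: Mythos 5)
Your proof is correct and follows essentially the same strategy as the paper's: both regularize with an additive $\epsilon>0$ to avoid the singularity of $y\mapsto y^{1/p}$ near zero, both apply the chain rule for absolutely continuous functions to $(z_s+\epsilon)^{1/p}$, both use the monotonicity of power functions together with the hypothesis $(x_s)^p\leq z_s$ to bound the derivative by $\beta_s$, and both pass $\epsilon\searrow 0$ at the end. Your introduction of $z_s$ and $f_\epsilon$ is just a cleaner bookkeeping of the same computation the paper writes out directly (the paper splits the bound $\left(\eps+z_s\right)^{\frac{1}{p}-1}(x_s)^{p-1}\leq 1$ into two monotonicity steps, whereas you do it in one).
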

\begin{proof}[Proof of Lemma~\ref{l:nonlinear.Gronwall}]
  Nonnegativity of $p,x,\beta$,
  the fact that $(\eps/2,\infty)\ni y\mapsto y^{\frac{1}{p}}\in\R$, $\eps\in(0,\infty)$,
  are continuously differentiable functions,
  the chain rule for absolutely continuous functions,
  the fact that $(0,\infty)\ni y\mapsto y^{\frac{1}{p}-1}\in\R$
  is decreasing,
  the fact that $(0,\infty)\ni y\mapsto y^{\frac{p-1}{p}}\in\R$ is increasing,
  and assumption~\eqref{eq:nonlinear.Gronwall.ass} imply
  for all $\eps\in(0,\infty)$ that
  \begin{equation}  \begin{split}
    &\left(\eps+(x_0)^p+p\int_0^t(x_r)^{p-1}\beta_r\,dr\right)^{\!\frac{1}{p}}
    \\&
    =
    \left(\eps+(x_0)^p\right)^{\!\frac{1}{p}}
    +\int_0^t \tfrac{1}{p}\Big(\eps+(x_0)^p+p\int_0^s(x_r)^{p-1}\beta_r\,dr
    \Big)^{\frac{1}{p}-1}p(x_s)^{p-1}\beta_s\,ds
    \\&
    \leq
    \left(\eps+(x_0)^p\right)^{\!\frac{1}{p}}
    +\int_0^t \tfrac{1}{p}\Big(\eps+(x_s)^p\Big)^{\frac{1}{p}-1}
    p(x_s)^{p-1}\beta_s\,ds
    \\&
    \leq
    \left(\eps+(x_0)^p\right)^{\!\frac{1}{p}}
    +\int_0^t \tfrac{1}{p}\Big(\eps + (x_s)^p\Big)^{\frac{1}{p}-1}
    p\Big(\eps+(x_s)^p\Big)^{\frac{p-1}{p}}\beta_s\,ds
    =
    \left(\eps+(x_0)^p\right)^{\!\frac{1}{p}}
    +\int_0^t \beta_s\,ds.
  \end{split}     \end{equation}
  This,
  the fact that the function
  $(0,\infty)\ni y\mapsto y^{\frac{1}{p}}\in\R$
  is increasing and continuous,
  and assumption~\eqref{eq:nonlinear.Gronwall.ass} imply
  that
  \begin{equation}  \begin{split}
    x_t&\leq \left((x_0)^p+p\int_0^t(x_r)^{p-1}\beta_r\,dr\right)^{\!\frac{1}{p}}
    =\lim_{(0,\infty)\ni\eps\to0}
    \left(\eps+(x_0)^p+p\int_0^t(x_r)^{p-1}\beta_r\,dr\right)^{\!\frac{1}{p}}
    \\&\leq
    \lim_{(0,\infty)\ni\eps\to0}
    \left(\left(\eps+(x_0)^p\right)^{\!\frac{1}{p}}
    +\int_0^t \beta_s\,ds\right)
    =
    x_0
    +\int_0^t \beta_s\,ds.
  \end{split}     \end{equation}
  This completes the proof of Lemma~\ref{l:nonlinear.Gronwall}.
\end{proof}

\subsection{A new stochastic Gronwall-Lyapunov inequality}\label{ssec:GL.inequality}

The following theorem, Theorem~\ref{thm:moments:hilbert.m}, 
is the main result of this article and states
our stochastic Gronwall-Lyapunov inequality for It\^o processes.
A central step in the proof of the uniform moment estimate~\eqref{eq:thm:moments:hilbert.m.sup} below
is a strong observation
of Scheutzow \cite{Scheutzow2013} namely -- suitably adapted to our situation --
that
inequality~\eqref{eq:at.end}
implies~\eqref{eq:Mplus.Mminus}
and then a maximal $L^p$-inequality for local martingales of Burkholder \cite{Burkholder1975}
can be applied which together with~\eqref{eq:Mplus.Mminus} eliminates the involved local martingale in
inequality~\eqref{eq:at.end}.
Here instead of Burkholder \cite{Burkholder1975} we apply
Theorem 1.4 of Ba{\~n}uelos \& Os\c{e}kowski \cite{BanuelosOsekowski2014}
which provides the optimal constants
   \begin{equation}  \begin{split}\label{eq:optimal.constants}
     \Big((\tfrac{1}{p}-1)^p+\int_{p^{-1}-1}^{\infty}\tfrac{s^{p-1}}{s+1}\,ds\Big)^{\frac{1}{p}}
     =\Big(\tfrac{1}{p}\int_{\frac{1-p}{p}}^{\infty}\tfrac{s^{p}}{(s+1)^2}\,ds\Big)^{\frac{1}{p}},\quad p\in(0,1),
   \end{split}     \end{equation}
in Burkholder's result.
\begin{theorem}[A stochastic Gronwall-Lyapunov inequality]\label{thm:moments:hilbert.m}
Assume Setting~\ref{sett:moments.m},
  let $p\in[1,\infty)$,
  let 
  $ V=(V(s,x))_{s\in[0,T],x\in O} \in C^{ 1,2 }( [0,T]\times O, [0,\infty) )$,
  let
  $\alpha,\beta \colon [0,T] \times \Omega \to [0,\infty]$
  be $\mathcal{B}([0,T])\otimes\mathcal{F}$/$\mathcal{B}([0,\infty])$-measurable and adapted stochastic processes
  which satisfy $\P$-a.s.\ that $\int_0^{\tau}|\alpha_u|\,du<\infty$
  and which satisfy that $\P$-a.s.\ it holds for Lebesgue-almost all $s\in[0,\tau]$ that
  \begin{equation}  \begin{split}\label{eq:ass.powers2}
    &(\tfrac{\partial}{\partial s}V)(s,X_s)
    +
    (\tfrac{\partial}{\partial x}V)(s,X_s)\,
    a_s
    +\tfrac{1}{2}\textup{trace}\Big(
    b_sb_s^{*}\,
    (\textup{Hess}_xV)(s,X_s)
    \Big)
    +\tfrac{p-1}{2}\tfrac{\|
    (\frac{\partial}{\partial x}V)(s,X_s)\,b_s\|_{\HS(U,\R)}^2}
    {V(s,X_s)}
    \\&
    \leq \alpha_s\,V(s,X_s)+\beta_s.
  \end{split}     \end{equation}
   Then
   \begin{enumerate}[(i)]
     \item\label{item:1}  
   it holds for all $q_1,q_2\in(0,\infty]$
   with $\tfrac{1}{q_1}=\tfrac{1}{q_2}+\tfrac{1}{p}$ that
   \begin{equation}  \begin{split}\label{eq:thm:moments:hilbert.m}
     &\|V(\tau,X_{\tau})\|_{L^{q_1}(\P;\R)}
     \\&\leq
     \bigg\|
     \exp\Big(\smallint_0^{\tau} \alpha_u\,du \Big)
     \bigg\|_{L^{q_2}(\P;\R)}
    \cdot
     \left(
     \Big\|V(0,X_0) \Big\|_{L^p(\P;\R)}
     +\int_0^T
     \Big\|
     \tfrac{ \1_{[0,\tau]}(s)\beta_s }
     {\exp\left(\smallint_0^s \alpha_u\,du \right)}
     \Big\|_{L^p(\P;\R)}\,ds
     \right)
   \end{split}     \end{equation}
   and
   \item\label{item:2}
   it holds for all $q_1,q_2,q_3\in(0,\infty]$
   with $q_3< p$ and $\tfrac{1}{q_1}=\tfrac{1}{q_2}+\tfrac{1}{q_3}$ that
   \begin{equation}  \begin{split}\label{eq:thm:moments:hilbert.m.sup}
     &\left\|\sup_{s\in[0,\tau]}V(s,X_s)\right\|_{L^{q_1}(\P;\R)}
     \leq \bigg(\tfrac{p}{q_3}\smallint_{\frac{p-q_3}{q_3}}^{\infty}\tfrac{s^{\frac{q_3}{p}}}{(s+1)^2}\,ds+1\bigg)^{\frac{p}{q_3}}
     \\&
     \cdot
     \left\|
     \exp\left(\smallint_0^{\tau} \alpha_u\,du \right)
     \right\|_{L^{q_2}(\P;\R)}
     \left\|
     V(0,X_0)
     +\int_0^{\tau}
     \tfrac{ \beta_s }
     {\exp\left(\smallint_0^s \alpha_u\,du \right)}\,ds
     \right\|_{L^{q_3}(\P;\R)}.
   \end{split}     \end{equation}
   \end{enumerate}
\end{theorem}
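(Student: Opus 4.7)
The plan is to derive both bounds from a single pathwise It\^o-type inequality for the discounted process
\[
Y_t := V(\min\{t,\tau\},X_{\min\{t,\tau\}})\exp\!\left(-\smallint_0^{\min\{t,\tau\}}\alpha_u\,du\right).
\]
I would apply Lemma~\ref{l:integrating.factor.V} with the test function $(s,x)\mapsto V(s,x)^p$, the choice $\chi_s:=p\alpha_s$, and $\eta_s\equiv 0$. A direct chain-rule computation shows that the Hessian cross-term $\tfrac{p(p-1)}{2}V^{p-2}\|(\partial_x V)b\|_{\HS}^{2}$ produced by applying It\^o to $V^p$ is exactly absorbed by the $\tfrac{p-1}{2}\|(\partial_x V)b\|_{\HS}^{2}/V$ appearing on the left-hand side of the standing hypothesis~\eqref{eq:ass.powers2}. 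Writing $\tilde\beta_s := \1_{[0,\tau]}(s)\,\beta_s\,\exp(-\smallint_0^s\alpha_u\,du)$, the outcome is the existence of a continuous local martingale $\tilde M$ with $\tilde M_0=0$ such that $\P$-a.s.\
\[
Y_t^p \leq Y_0^p + \tilde M_t + p\int_0^t Y_s^{p-1}\tilde\beta_s\,ds \quad \text{for all }t\in[0,T].
\]

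For item~(\ref{item:1}) I would localize $\tilde M$ by a sequence of stopping times, take expectations, and apply H\"older's inequality inside the integral via $\E[Y_s^{p-1}\tilde\beta_s]\leq\E[Y_s^p]^{(p-1)/p}\|\tilde\beta_s\|_{L^p(\P;\R)}$. The resulting integral inequality for $s\mapsto\|Y_s\|_{L^p(\P;\R)}$ is precisely the hypothesis of the Gronwall--Bellman--Opial Lemma~\ref{l:nonlinear.Gronwall}; invoking it bypasses Young's inequality --- as announced in the introduction --- and produces $\|Y_T\|_{L^p(\P;\R)}\leq \|Y_0\|_{L^p(\P;\R)} + \int_0^T\|\tilde\beta_s\|_{L^p(\P;\R)}\,ds$. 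A closing H\"older step with exponents $q_2,p$ applied to the identity $V(\tau,X_\tau) = Y_T\,\exp(\smallint_0^\tau\alpha_u\,du)$ then yields~\eqref{eq:thm:moments:hilbert.m}.

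For item~(\ref{item:2}) I would combine the same pathwise It\^o inequality with the sharp $L^r$-maximal inequality for local martingales of Ba{\~n}uelos--Os\c{e}kowski~\cite[Theorem~1.4]{BanuelosOsekowski2014}, applied at the exponent $r := q_3/p \in (0,1)$ --- where the hypothesis $q_3<p$ is essential. Setting $A_\tau := p\int_0^\tau Y_s^{p-1}\tilde\beta_s\,ds$, the nonnegativity $Y_t^p\geq 0$ forces $\sup_t(-\tilde M_t)\leq Y_0^p+A_\tau$, while simultaneously $\sup_t Y_t^p\leq Y_0^p+A_\tau+\sup_t\tilde M_t$. The maximal inequality together with the $r$-subadditivity $\|X+Y\|_{L^r(\P;\R)}^r\leq\|X\|_{L^r(\P;\R)}^r+\|Y\|_{L^r(\P;\R)}^r$ (valid for $r<1$) then produces the sharp prefactor stated in~\eqref{eq:thm:moments:hilbert.m.sup}. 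The residual self-referential $Y$-dependence of $A_\tau$ is removed by applying Lemma~\ref{l:nonlinear.Gronwall} pathwise to the majorizing function $\hat Y_s:=(Y_0^p+\sup_u\tilde M_u + p\int_0^s Y_r^{p-1}\tilde\beta_r\,dr)^{1/p}\geq Y_s$: since $\hat Y_s^p\leq\hat Y_0^p + p\int_0^s\hat Y_r^{p-1}\tilde\beta_r\,dr$, the lemma delivers $\sup_t Y_t\leq(Y_0^p+\sup_u\tilde M_u)^{1/p}+\int_0^\tau\tilde\beta_s\,ds$, which bounds $Y_0^p+A_\tau$ in terms of $V(0,X_0)$, $\sup_u\tilde M_u$, and $\int_0^\tau\tilde\beta_s\,ds$. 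A final H\"older step with exponents $q_2,q_3$ translates $Y$ back into $V$ and produces~\eqref{eq:thm:moments:hilbert.m.sup}.

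The main obstacle is item~(\ref{item:2}): one must close the self-referential loop created by the simultaneous appearances of $Y$ in the bound $\sup_t(-\tilde M_t)\leq Y_0^p+A_\tau$ and inside $A_\tau$ itself, all while preserving the sharp Ba{\~n}uelos--Os\c{e}kowski constant from~\eqref{eq:optimal.constants}. The pathwise Gronwall--Bellman--Opial Lemma~\ref{l:nonlinear.Gronwall} is precisely the tool that accomplishes this closure without recourse to Young's inequality --- which, as noted in the introduction, would otherwise yield a strictly larger constant whenever $\beta\not\equiv 0$.
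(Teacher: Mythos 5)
Your set-up and your treatment of item~(i) match the paper's proof: apply Lemma~\ref{l:integrating.factor.V} to the test function $V^p$ with $\chi_s=p\alpha_s$, $\eta\equiv 0$; check that the hypothesis~\eqref{eq:ass.powers2}, after multiplication by $p\,V^{p-1}$, exactly absorbs the It\^o cross-term $\tfrac{p(p-1)}{2}V^{p-2}\|(\partial_x V)b\|^2_{\HS}$; then localize, take expectations, apply H\"older, and close with Lemma~\ref{l:nonlinear.Gronwall} applied to $s\mapsto\|Y_s\|_{L^p(\P;\R)}$. The one step you elide but would need is the $\eps$-regularization: the paper works throughout with $(\eps+V)^p$ rather than $V^p$ because $y\mapsto y^p$ is not $C^2$ at $y=0$ when $p\in[1,2)$ and the denominator $V$ in~\eqref{eq:ass.powers2} must be kept away from zero; the limit $\eps\to 0$ is taken at the very end via Fatou and dominated convergence.

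For item~(ii) your plan diverges from the paper, and the divergence contains a genuine gap. You correctly identify the three ingredients the paper also uses: the Ba\~nuelos--Os\c{e}kowski maximal inequality at exponent $r=q_3/p\in(0,1)$, Scheutzow's observation $\sup_t(-\tilde M_t)\leq Y_0^p+A_\tau$ coming from $Y_t^p\geq 0$, and $r$-subadditivity, yielding $\E[(\sup_t Y_t^p)^r]\leq(1+c_r^r)\,\E[(Y_0^p+A_\tau)^r]$. The problem is what comes next. You propose to eliminate the self-reference by the pathwise Gronwall--Bellman--Opial argument applied to $\hat Y_s=(Y_0^p+\sup_u\tilde M_u+p\int_0^sY_r^{p-1}\tilde\beta_r\,dr)^{1/p}$, which indeed yields $\sup_t Y_t\leq(Y_0^p+\sup_u\tilde M_u^+)^{1/p}+\int_0^\tau\tilde\beta_s\,ds$. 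But this bound still contains $\sup_u\tilde M_u^+$; the only handle on that quantity is the maximal inequality, which returns you to $\sup_u\tilde M_u^-\leq Y_0^p+A_\tau$ and hence to $Y$ itself, so the loop is \emph{not} closed. Attempting to close it by moving $\E[(\sup_u\tilde M_u^+)^r]$ to the left-hand side would require dividing by $(1-c_r^r)$, which fails because the Ba\~nuelos--Os\c{e}kowski constant is not bounded by $1$. The paper instead never tries to express $Y_0^p+A_\tau$ purely in terms of initial data and $\tilde\beta$; it applies H\"older (with exponents $\tfrac{p}{p-1}$ and $p$) inside $\E[(Y_0^p+A_\tau)^r]$ to factor out the \emph{sub-unit power} $\bigl(\E[\sup_t Y_t^{pr}]\bigr)^{(p-1)/p}$ of the very quantity being estimated, and then divides that factor across both sides (see~\eqref{eq:sup.trick}--\eqref{eq:before.dividing}). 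This algebraic self-bounding step is what actually closes the loop, and it has no analogue in your sketch. A pathwise Gronwall argument cannot substitute for it, precisely because the martingale running maximum survives in the pathwise bound.
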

\begin{proof}[Proof of Theorem~\ref{thm:moments:hilbert.m}]
  Throughout this proof
%
%
  let $\tau_n\colon\Omega\to[0,T]$, $n\in\N$, be the functions which satisfy for all $n\in\N$
  that $\tau_n=\inf\big(\{s\in[0,T]\colon
     V(s,X_s)+\int_0^s\|\tfrac{\partial}{\partial x}V(u,X_u)b_u\|_{\HS(U,\R)}^2\,du
     \geq n\}\cup\{\tau\}\big)$
  and for all $x,y\in\R$ we denote by $x\wedge y\in\R$ the real number
  which satisfies that $x\wedge y=\min\{x,y\}$.
  Lemma~\ref{l:integrating.factor.V}
  (applied for every $\eps\in(0,\infty)$
  with
  $V=([0,T]\times O\ni(t,x)\mapsto (\eps+V(t,x))^p\in\R)$,
  $\chi_s(\omega)=p\alpha_s\1_{[0,\tau(\omega)]}(s)$,
  and
  $\eta_s(\omega)=0$
  for all $s\in[0,T]$, $\omega\in\Omega$
  in the notation of
  Lemma~\ref{l:integrating.factor.V})
  yields that
  for all $t\in[0,T]$, $\eps\in(0,\infty)$
  it holds $\P$-a.s.\ that
   \begin{equation}  \begin{split}\label{eq:integrating.factor2}
     &\tfrac{(\eps+V(\tau\wedge t,X_{\tau\wedge t}))^p}
     {\exp\left(\smallint_0^{\tau\wedge t} p\alpha_r\,dr\right)
     }
     =\left(\eps+V(0,X_0)\right)^{p}
     +\int_0^t\tfrac{p\left(\eps+V(s,X_s)\right)^{p-1}(\frac{\partial}{\partial x}V)(s,X_s)b_s\1_{[0,\tau]}(s)}
     {\exp\left(\smallint_0^s p\alpha_r\,dr\right)
     }
     \,dW_s
     \\&
     +\int_0^{\tau\wedge t}\tfrac{p\left(\eps+V(s,X_s)\right)^{p-1}(\frac{\partial}{\partial s}V)(s,X_s)+
     p\left(\eps+V(s,X_s)\right)^{p-1}(\frac{\partial}{\partial x}V)(s,X_s)a_s+p\left(\eps+V(s,X_s)\right)^{p-1}\frac{1}{2}\textup{trace}\left(b_s^{*}(\textup{Hess}_xV)(s,X_s)b_s\right)
     }
     {\exp\left(\smallint_0^s p\alpha_r\,dr\right)
     }
     \,ds
     \\&
     +\int_0^{\tau\wedge t}\tfrac{\frac{1}{2}p(p-1) \left(\eps+V(s,X_s)\right)^{p-2}
     \left\|(\frac{\partial}{\partial x}V)(s,X_s)b_s\right\|_{\HS(U,\R)}^2
     -\left(\eps+V(s,X_s)\right)^{p}p\alpha_s
     }
     {\exp\left(\smallint_0^s p\alpha_r\,dr\right)
     }
     \,ds.
   \end{split}     \end{equation}
   The growth assumption~\eqref{eq:ass.powers2}
   and nonnegativity of $\alpha$
   yield for all $\eps\in(0,\infty)$ that $\P$-a.s.\ it holds for Lebesgue-almost all $s\in[0,\tau]$,
   that
   \begin{equation}  \begin{split}\label{eq:herea}
   &p\left(\eps+V(s,X_s)\right)^{p-1}(\tfrac{\partial}{\partial s}V)(s,X_s)+
     p\left(\eps+V(s,X_s)\right)^{p-1}(\tfrac{\partial}{\partial x}V)(s,X_s)a_s
     \\&\quad
     +p\left(\eps+V(s,X_s)\right)^{p-1}\tfrac{1}{2}\textup{trace}\left(b_s^{*}(\textup{Hess}_xV)(s,X_s)b_s\right)
     \\&\quad
     +\tfrac{1}{2}p(p-1) \left(\eps+V(s,X_s)\right)^{p-2}
     \left\|(\tfrac{\partial}{\partial x}V)(s,X_s)b_s\right\|_{\HS(U,\R)}^2
     -(\eps+V(s,X_s))^pp\alpha_s
     \\&
   =
   p\left(\eps+V(s,X_s)\right)^{p-1}
   \bigg[
     (\tfrac{\partial}{\partial s}V)(s,X_s)
     +
     (\tfrac{\partial}{\partial x}V)(s,X_s)a_s
     +\tfrac{1}{2}\textup{trace}\left(b_sb_s^{*}(\textup{Hess}_xV)(s,X_s)\right)
     \\&\quad
     +\tfrac{p-1}{2}\tfrac{
     \left\|(\frac{\partial}{\partial x}V)(s,X_s)b_s\right\|_{\HS(U,\R)}^2
     }{\eps+V(s,X_s)}
     -\alpha_s \big(\eps+V(s,X_s)\big)
     \bigg]
     \\&
   \leq
   p\left(\eps+V(s,X_s)\right)^{p-1}
   \beta_s.
 \end{split}     \end{equation}
  Then~\eqref{eq:integrating.factor2}
  and~\eqref{eq:herea}
   imply that for all $t\in[0,T]$, $\eps\in(0,\infty)$, $n\in\N$
   it holds $\P$-a.s.\ that
   \begin{equation}  \begin{split}\label{eq:at.end}
     &\tfrac{\left(\eps+V({\tau_n\wedge t},X_{\tau_n\wedge t})\right)^{p}} {\exp\left(\smallint_0^{\tau_n\wedge t} p\alpha_u\,du \right)}
     \\&
     \leq
     \left(\eps+V(0,X_0)\right)^{p}
     +\int_0^t\tfrac{p(\eps+V(s,X_s))^{p-1}(\frac{\partial}{\partial x}V)(s,X_s)b_s\1_{[0,\tau_n]}(s)}{
     \exp\left(\smallint_0^s p\alpha_u\,du \right)}\,dW_s
     +\int_0^t
     \tfrac{ p(\eps+V(s,X_s))^{p-1}\beta_s \1_{[0,\tau_n]}(s)}
     {\exp\left(\smallint_0^s p\alpha_u\,du \right)}\,ds.
   \end{split}     \end{equation}
   Now we prove item~\eqref{item:1}.
   Without loss of generality we assume that
   $\E\big[|V(0,X_0)|^p\big]<\infty$
   and that
   $\int_0^T
     \|
     \tfrac{ \1_{[0,\tau]}(s)\beta_s }
     {\exp\left(\smallint_0^s \alpha_u\,du \right)}
     \|_{L^p(\P;\R)}\,ds
   <\infty$
   (otherwise the assertion is trivial).
   Note for every $n\in\N$ that $\tau_n$ is a stopping time
   and that the stochastic integral
   on the right-hand side of~\eqref{eq:at.end} stopped at $\tau_n$ is
   integrable with vanishing expectation.
   This,
   \eqref{eq:at.end},
   linearity,
   Tonelli's theorem,
   and
   H\"older's inequality
   yield for all $t\in[0,T]$, $\eps\in(0,\infty)$, $n\in\N$
   that
   \begin{equation}  \begin{split}\label{eq:estimate.for.Gronwall}
     &\Big\|\tfrac{\eps+V({\tau_n\wedge t},X_{\tau_n\wedge t})} {\exp\left(\smallint_0^{\tau_n\wedge t} \alpha_u\,du \right)}
     \Big\|_{L^p(\P;\R)}^p=
     \E\Big[
     \tfrac{\left(\eps+V({\tau_n\wedge t},X_{\tau_n\wedge t})\right)^{\!p}} {\exp\left(\smallint_0^{\tau_n\wedge t} p\alpha_u\,du \right)}
     \Big]
     \\&
     \leq
     \E\!\left[\left(\eps+V(0,X_0)\right)^{\!p}
     +\int_0^{t}\tfrac{p(\eps+V(s,X_s))^{p-1}(\frac{\partial}{\partial x}V)(s,X_s)b_s\1_{[0,\tau_n]}(s)}{
     \exp\left(\smallint_0^s p\alpha_u\,du \right)}\,dW_s
     +\int_0^{t}
     \tfrac{ p(\eps+V(s,X_s))^{p-1}\beta_s\1_{[0,\tau_n]}(s) }
     {\exp\left(\smallint_0^s p\alpha_u\,du \right)}\,ds
     \right]
     \\&
     =
     \Big\|\eps+V(0,X_0)\Big\|_{L^p(\P;\R)}^p
     +p\int_0^{t}
     \E\!\left[
     \tfrac{ (\eps+V(\tau_n\wedge s,X_{\tau_n\wedge s}))^{p-1} }
     {\exp\left(\smallint_0^{\tau_n\wedge s} (p-1)\alpha_u\,du \right)}
     \tfrac{\1_{[0,\tau_n]}(s)\beta_s}
     {\exp\left(\smallint_0^s \alpha_u\,du \right)}
     \right]
     \,ds
     \\&
     \leq
     \Big\|\eps+V(0,X_0)\Big\|_{L^p(\P;\R)}^p
     +p\int_0^{t}
     \Big\|
     \tfrac{ \left(\eps+V(\tau_n\wedge s,X_{\tau_n\wedge s})\right)^{p-1} }
     {\exp\left(\smallint_0^{\tau_n\wedge s} (p-1)\alpha_u\,du \right)}
     \Big\|_{L^{\frac{p}{p-1}}(\P;\R)}
     \Big\|
     \tfrac{ \1_{[0,\tau_n]}(s)\beta_s }
     {\exp\left(\smallint_0^s \alpha_u\,du \right)}
     \Big\|_{L^p(\P;\R)}\,ds
     \\&
     =
     \Big\|\eps+V(0,X_0)\Big\|_{L^p(\P;\R)}^p
     +p\int_0^{t}
     \Big\|
     \tfrac{ \eps+V(\tau_n\wedge s,X_{\tau_n\wedge s}) }
     {\exp\left(\smallint_0^{\tau_n\wedge s} \alpha_u\,du \right)}
     \Big\|_{L^p(\P;\R)}^{p-1}
     \Big\|
     \tfrac{ \1_{[0,\tau_n]}(s)\beta_s }
     {\exp\left(\smallint_0^s \alpha_u\,du \right)}
     \Big\|_{L^p(\P;\R)}\,ds.
   \end{split}     \end{equation}
   Observe for all $t\in[0,T]$, $\eps\in(0,\infty)$, $n\in\N$ that
   \begin{equation}  \begin{split}
     &\Big\|\eps+V(0,X_0)\Big\|_{L^p(\P;\R)}^p
     +p\int_0^{t}
     \Big\|
     \tfrac{ \eps+V(\tau_n\wedge s,X_{\tau_n\wedge s}) }
     {\exp\left(\smallint_0^{\tau_n\wedge s} \alpha_u\,du \right)}
     \Big\|_{L^p(\P;\R)}^{p-1}
     \Big\|
     \tfrac{ \1_{[0,\tau_n]}(s)\beta_s }
     {\exp\left(\smallint_0^s \alpha_u\,du \right)}
     \Big\|_{L^p(\P;\R)}\,ds
     \\&
     \leq
     \Big\|\eps+V(0,X_0)\Big\|_{L^p(\P;\R)}^p
     +p
     \Big\|
      \eps+V(0,X_{0})+n
     \Big\|_{L^p(\P;\R)}^{p-1}
     \int_0^{T}
     \Big\|
     \tfrac{ \1_{[0,\tau]}(s)\beta_s }
     {\exp\left(\smallint_0^s \alpha_u\,du \right)}
     \Big\|_{L^p(\P;\R)}\,ds
     <\infty.
   \end{split}     \end{equation}
   This, \eqref{eq:estimate.for.Gronwall},
   and the nonlinear Gronwall-Bellman-Opial inequality
   in Lemma~\ref{l:nonlinear.Gronwall}
   (applied in the case $p>1$ for all $n\in\N$, $\eps\in(0,\infty)$
   with $t=T$, $p=p$, $x_s=
     \Big\|
     \tfrac{ \eps+V(\tau_n\wedge s,X_{\tau_n\wedge s}) }
     {\exp\left(\smallint_0^{\tau_n\wedge s} \alpha_u\,du \right)}
     \Big\|_{L^p(\P;\R)}
     $,
     $\beta_s=
     \Big\|
     \tfrac{ \1_{[0,\tau_n]}(s)\beta_s }
     {\exp\left(\smallint_0^s \alpha_u\,du \right)}
     \Big\|_{L^p(\P;\R)}$
   for all $s\in[0,T]$ in the notation of
   Lemma~\ref{l:nonlinear.Gronwall})
   imply for all
   $n\in\N$,
   $\eps\in(0,\infty)$
   that
   \begin{equation}  \begin{split}\label{eq:at.end2}
     &\Big\|\tfrac{\eps+V({\tau_n\wedge T},X_{\tau_n\wedge T})} {\exp\left(\smallint_0^{\tau_n\wedge T} \alpha_u\,du \right)}
     \Big\|_{L^p(\P;\R)}
     \leq
     \Big\|\eps+V(0,X_0) \Big\|_{L^p(\P;\R)}
     +\int_0^T
     \Big\|
     \tfrac{ \1_{[0,\tau_n]}(s)\beta_s }
     {\exp\left(\smallint_0^s \alpha_u\,du \right)}
     \Big\|_{L^p(\P;\R)}\,ds.
   \end{split}     \end{equation}
   Moreover the fact that $(V(s,X_s))_{s\in[0,T]}$ and
   $((\tfrac{\partial}{\partial x}V)(s,X_s))_{s\in[0,T]}$ have
   continuous sample paths and the fact that
   $\P(\int_0^{\tau}\|b_s\|_{\HS(U,H)}^2\,ds<\infty)=1$
   imply  that $\P(\tau=\lim_{n\to\infty}\tau_n)=1$.
   This, continuity of $V$, nonnegativity of $V$ and $\beta$,
   Fatou's lemma,
   \eqref{eq:at.end2},
   and the dominated convergence theorem together with
  $\E[|1+V(0,X_0)|^p]<\infty$
   ensure that
   \begin{equation}  \begin{split}
     &\E\Big[
     \tfrac{\left(V({\tau},X_{\tau})\right)^{\!p}} {\exp\left(\smallint_0^{\tau} p\alpha_u\,du \right)}
     \Big]
     =
     \E\Big[
     \liminf_{n\to\infty}
     \tfrac{\left(V({\tau_n},X_{\tau_n})\right)^{\!p}} {\exp\left(\smallint_0^{\tau_n} p\alpha_u\,du \right)}
     \Big]
     \leq
     \liminf_{(0,1]\ni\eps\to 0}\liminf_{n\to\infty}
     \E\Big[
     \tfrac{\left(\eps+V({\tau_n},X_{\tau_n})\right)^{\!p}} {\exp\left(\smallint_0^{\tau_n} p\alpha_u\,du \right)}
     \Big]
     \\&
     \leq
     \liminf_{(0,1]\ni\eps\to 0}\liminf_{n\to\infty}
     \left(
     \Big\|\eps+V(0,X_0) \Big\|_{L^p(\P;\R)}
     +\int_0^T
     \Big\|
     \tfrac{ \1_{[0,\tau_n]}(s)\beta_s }
     {\exp\left(\smallint_0^s \alpha_u\,du \right)}
     \Big\|_{L^p(\P;\R)}\,ds
     \right)^{\!p}
     \\&
     \leq
     \liminf_{(0,1]\ni\eps\to 0}
     \left(
     \Big\|\eps+V(0,X_0) \Big\|_{L^p(\P;\R)}
     +\int_0^T
     \Big\|
     \tfrac{ \1_{[0,\tau]}(s)\beta_s }
     {\exp\left(\smallint_0^s \alpha_u\,du \right)}
     \Big\|_{L^p(\P;\R)}\,ds
     \right)^{\!p}
     \\&
     =
     \left(
     \Big\|V(0,X_0) \Big\|_{L^p(\P;\R)}
     +\int_0^T
     \Big\|
     \tfrac{ \1_{[0,\tau]}(s)\beta_s }
     {\exp\left(\smallint_0^s \alpha_u\,du \right)}
     \Big\|_{L^p(\P;\R)}\,ds
     \right)^{\!p}.
   \end{split}     \end{equation}
   This, the assumption $\P(\int_0^{\tau} \alpha_u\,du<\infty)=1$, and H\"older's inequality
   yield that
   for all $q_1,q_2\in(0,\infty]$
   with $\tfrac{1}{q_1}=\tfrac{1}{q_2}+\tfrac{1}{p}$ 
   it holds that
   \begin{equation}  \begin{split}
     &\|V(\tau,X_\tau)\|_{L^{q_1}(\P;\R)}
     =\Big\|
     \exp\Big(\smallint_0^{\tau} \alpha_u\,du \Big)
     \tfrac{V(\tau,X_\tau)} {\exp\left(\smallint_0^\tau \alpha_u\,du \right)}
     \Big\|_{L^{q_1}(\P;\R)}
     \\&
     \leq
     \Big\|
     \exp\Big(\smallint_0^\tau \alpha_u\,du \Big)
     \Big\|_{L^{q_2}(\P;\R)}
     \cdot
     \Big\|
     \tfrac{V(\tau,X_\tau)} {\exp\left(\smallint_0^\tau \alpha_u\,du \right)}
     \Big\|_{L^{p}(\P;\R)}
     \\&
     \leq
     \bigg\|
     \exp\Big(\smallint_0^{\tau} \alpha_u\,du \Big)
     \bigg\|_{L^{q_2}(\P;\R)}
     \cdot
     \left(
     \Big\|V(0,X_0) \Big\|_{L^p(\P;\R)}
     +\int_0^T
     \Big\|
     \tfrac{ \1_{[0,\tau]}(s)\beta_s }
     {\exp\left(\smallint_0^s \alpha_u\,du \right)}
     \Big\|_{L^p(\P;\R)}\,ds
     \right).
   \end{split}     \end{equation}
   This proves item~\eqref{item:1}.

   Next we prove item~\eqref{item:2}.
   Throughout the proof of item~\eqref{item:2}
   let $q_1,q_2,q_3\in(0,\infty]$ satisfy that $q_3<p$ and $\tfrac{1}{q_1}=\tfrac{1}{q_2}+\tfrac{1}{q_3}$
   and let $M^{\eps}\colon [0,T]\times\Omega\to\R$, $\eps\in(0,\infty)$, be stochastic processes with continuous sample paths
   such that for all $t\in[0,T]$, $\eps\in(0,\infty)$ it holds $\P$-a.s.\ that
   \begin{equation}  \begin{split}
   M_t^{\eps}=
   \int_0^t\tfrac{p(\eps+V(s,X_s))^{p-1}(\frac{\partial}{\partial x}V)(s,X_s)b_s\1_{[0,\tau]}(s)}{ \exp\left(\smallint_0^s p\alpha_u\,du \right)}\,dW_s.
   \end{split}     \end{equation}
   Such a continuous process exists since It\^o-integrals admit continuous versions.
   Throughout the proof of item~\eqref{item:2}
   we assume
   without loss of generality that
   $
     \left\|
     V(0,X_0)
     +\int_0^{\tau}
     \tfrac{ \beta_s }
     {\exp\left(\smallint_0^s \alpha_u\,du \right)}\,ds
     \right\|_{L^{q_3}(\P;\R)}<\infty
   $
   (otherwise the assertion is trivial).
   Now~\eqref{eq:at.end} and nonnegativity of $V$ and $\beta$ imply that
   for all $\eps\in(0,\infty)$, $n\in\N$ it holds $\P$-a.s.\ for all $t\in[0,T]$ that
   \begin{equation}  \begin{split}\label{eq:Mplus.Mminus}
     \max\!\left\{-M_{\tau_n\wedge t}^{\eps},0\right\}
     \leq
     \left(\eps+V(0,X_0)\right)^{p}
     +\int_0^t
     \tfrac{ p(\eps+V(s,X_s))^{p-1}\beta_s\1_{[0,\tau_n]}(s) }
     {\exp\left(\smallint_0^s p\alpha_u\,du \right)}\,ds.
   \end{split}     \end{equation}
   Then~\eqref{eq:at.end}, the triangle inequality,
   Theorem 1.4 in Ba{\~n}uelos \& Os\c{e}kowski \cite{BanuelosOsekowski2014}
   (applied for all $\eps\in(0,\infty)$, $n\in\N$ with
   continuous martingale $(M_{\tau_n\wedge t})_{t\in[0,\infty)}$)
   together with~\eqref{eq:optimal.constants},
   and~\eqref{eq:Mplus.Mminus}
   yield
   for all $n\in\N$, $\eps\in(0,\infty)$, $q\in(0,1)$
   that
   \begin{equation}  \begin{split}\label{eq:prove.item.ii}
     &\E\!\left[\sup_{t\in[0,\tau_n]}\left|\tfrac{\left(\eps+V(t,X_t)\right)^{p}} {\exp\left(\smallint_0^t p\alpha_u\,du \right)}\right|^{q}\right]
     =\E\!\left[\left|\sup_{t\in[0,T]}\tfrac{\left(\eps+V({\tau_n\wedge t},X_{\tau_n\wedge t})\right)^{p}} {\exp\left(\smallint_0^{\tau_n\wedge t} p\alpha_u\,du \right)}\right|^{q}\right]
     \\&
     \leq
     \E\!\left[\left|
     \sup_{t\in[0,\infty)}M_{\tau_n\wedge t}
     +
     \left(\eps\!+V(0,X_0)\right)^{p}
     +\!\int_0^{\tau_n}\!
     \tfrac{ p(\eps+V(s,X_s))^{p-1}\beta_s }
     {\exp\left(\smallint_0^s p\alpha_u\,du \right)}\,ds
     \right|^{q}
     \right]
     \\&
     \leq
     \E\!\left[\left(\sup_{t\in[0,\infty)}
     \max\!\left\{M_{\tau_n\wedge t},0\right\}
     \right)^{\!q}
     \right]
     +
     \E\!\left[
     \left|
     \left(\eps+V(0,X_0)\right)^{p}
     +\int_0^{\tau_n}
     \tfrac{ p(\eps+V(s,X_s))^{p-1}\beta_s }
     {\exp\left(\smallint_0^s p\alpha_u\,du \right)}\,ds
     \right|^q
     \right]
     \\&
     \leq
     \bigg(
     \tfrac{1}{q}\smallint_{\frac{1-q}{q}}^{\infty}\tfrac{s^{q}}{(s+1)^2}\,ds
     \bigg)
     \E\!\left[\left(\sup_{t\in[0,\infty)}
     \max\!\left\{-M_{\tau_n\wedge t},0\right\}
     \right)^{\!q}
     \right]
     \\&\qquad\qquad
     +
     \E\!\left[
     \left|
     \left(\eps+V(0,X_0)\right)^{p}
     +\int_0^{\tau_n}
     \tfrac{ p(\eps+V(s,X_s))^{p-1}\beta_s }
     {\exp\left(\smallint_0^s p\alpha_u\,du \right)}\,ds
     \right|^q
     \right]
     \\&
     \leq
     \bigg(\tfrac{1}{q}\smallint_{\frac{1-q}{q}}^{\infty}\tfrac{s^{q}}{(s+1)^2}\,ds+1\bigg)
     \E\!\left[
     \left|
     \left(\eps+V(0,X_0)\right)^{p}
     +\int_0^{\tau_n}
     \tfrac{ p(\eps+V(s,X_s))^{p-1}\beta_s }
     {\exp\left(\smallint_0^s p\alpha_u\,du \right)}\,ds
     \right|^q
     \right].
   \end{split}     \end{equation}
   The fact that $\forall n\in\N\colon \tau_n\leq \tau$,
   nonnegativity of $\beta$,
   and
   H\"older's inequality
   prove that
   for all
   $n\in\N$, $\eps\in(0,\infty)$, $q\in(0,1)$
   it holds that
   \begin{equation}  \begin{split}\label{eq:sup.trick}
    & \E\!\left[
     \left|
     \left(\eps+V(0,X_0)\right)^{p}
     +\int_0^{\tau_n}
     \tfrac{ p(\eps+V(s,X_s))^{p-1}\beta_s }
     {\exp\left(\smallint_0^s p\alpha_u\,du \right)}\,ds
     \right|^q
     \right]
    \\&
    \leq\E\!\left[
    \left(\sup_{t\in[0,\tau_n]}\tfrac{\left(\eps+V(t,X_t)\right)^{\!(p-1)q}}{\exp\left(\smallint_0^t (p-1)q\alpha_u\,du \right)}\right)
     \left|
     \eps+V(0,X_0)
     +\int_0^{\tau_n}
     \tfrac{ \beta_s }
     {\exp\left(\smallint_0^s \alpha_u\,du \right)}\,ds
     \right|^q
     \right]
    \\&
    \leq
    \Bigg(\E\bigg[
    \sup_{t\in[0,\tau_n]}\tfrac{\left(\eps+V(t,X_t)\right)^{pq}}{\exp\left(\smallint_0^t pq\alpha_u\,du \right)}
    \bigg]\Bigg)^{\!\frac{p-1}{p}}
    \Bigg(
    \E\Bigg[
     \left|
     \eps+V(0,X_0)
     +\int_0^{\tau}
     \tfrac{ \beta_s }
     {\exp\left(\smallint_0^s \alpha_u\,du \right)}\,ds
     \right|^{pq}
     \Bigg]
    \Bigg)^{\!\frac{1}{p}}.
   \end{split}     \end{equation}
   Combining~\eqref{eq:sup.trick} and~\eqref{eq:prove.item.ii}
   proves for all
   $n\in\N$, $\eps\in(0,\infty)$, $q\in(0,1)$
   that
   \begin{equation}  \begin{split}\label{eq:before.dividing}
     &\E\!\left[\sup_{t\in[0,\tau_n]}\left|\tfrac{\left(\eps+V(t,X_t)\right)^{p}} {\exp\left(\smallint_0^t p\alpha_u\,du \right)}\right|^{q}\right]
     \bigg(\tfrac{1}{q}\smallint_{\frac{1-q}{q}}^{\infty}\tfrac{s^{q}}{(s+1)^2}\,ds+1\bigg)^{-1}
    \\&
    \leq
    \Bigg(\E\bigg[
    \sup_{t\in[0,\tau_n]}\tfrac{\left(\eps+V(t,X_t)\right)^{pq}}{\exp\left(\smallint_0^t pq\alpha_u\,du \right)}
    \bigg]\Bigg)^{\!\frac{p-1}{p}}
    \Bigg(
    \E\Bigg[
     \left|
     \eps+V(0,X_0)
     +\int_0^{\tau}
     \tfrac{ \beta_s }
     {\exp\left(\smallint_0^s \alpha_u\,du \right)}\,ds
     \right|^{pq}
     \Bigg]
    \Bigg)^{\!\frac{1}{p}}.
   \end{split}     \end{equation}
   Note that $q_3/p<1$.
   Next dividing for every $n\in\N$, $\eps\in(0,\infty)$, $q\in(0,1)$
   with $pq\leq q_3$
   both sides of~\eqref{eq:before.dividing} by
   \begin{equation}  \begin{split}
   \bigg(\E\bigg[
    \sup_{t\in[0,\tau_n]}\tfrac{\left(\eps+V(t,X_t)\right)^{\!pq}}{\exp\left(\smallint_0^t pq\alpha_u\,du \right)}
    \bigg]\bigg)^{\!\frac{p-1}{p}}
    \in\Big[\eps^{(p-1)q},\Big(\E\Big[(\eps+V(0,X_0)+n)^{pq}\Big]\Big)^{\!\frac{p-1}{p}}\Big]
    \subseteq(0,\infty)
   \end{split}     \end{equation}
   shows for all $q\in(0,q_3/p]$ that
   \begin{equation}  \begin{split}
     &\left(\E\!\left[\sup_{t\in[0,\tau_n]}\left|\tfrac{\eps+V(t,X_t)} {\exp\left(\smallint_0^t \alpha_u\,du \right)}\right|^{pq}\right]
     \right)^{\!\frac{1}{pq}}
     \bigg(\tfrac{1}{q}\smallint_{\frac{1-q}{q}}^{\infty}\tfrac{s^{q}}{(s+1)^2}\,ds+1\bigg)^{-\frac{1}{q}}
    \\&
    \leq
    \Bigg(
    \E\Bigg[
     \left|
     \eps+V(0,X_0)
     +\int_0^{\tau}
     \tfrac{ \beta_s }
     {\exp\left(\smallint_0^s \alpha_u\,du \right)}\,ds
     \right|^{pq}
     \Bigg]
    \Bigg)^{\frac{1}{pq}}.
   \end{split}     \end{equation}
   Then
   the monotone convergence theorem,
   the fact that
   $
     \big\|
     V(0,X_0)
     +\int_0^{\tau}
     \tfrac{ \beta_s }
     {\exp\left(\smallint_0^s \alpha_u\,du \right)}\,ds
     \big\|_{L^{q_3}(\P;\R)}<\infty
   $,
   and the dominated convergence theorem 
   prove that
   \begin{equation}  \begin{split}
     &\left\|\sup_{t\in[0,\tau]}\tfrac{V(t,X_t)} {\exp\left(\smallint_0^t \alpha_u\,du \right)}\right\|_{L^{q_3}(\P;\R)}
     \leq\liminf_{(0,\infty)\ni \eps\to0}
     \left(\E\!\left[\lim_{\N\ni n\to\infty}\sup_{t\in[0,\tau_n]}\left|\tfrac{\eps+V(t,X_t)} {\exp\left(\smallint_0^t \alpha_u\,du \right)}\right|^{q_3}\right]
     \right)^{\!\frac{1}{q_3}}
     \\&
     =\lim_{(0,\infty)\ni \eps\to0}\lim_{\N\ni n\to\infty}
     \left(\E\!\left[\sup_{t\in[0,\tau_n]}\left|\tfrac{\eps+V(t,X_t)} {\exp\left(\smallint_0^t \alpha_u\,du \right)}\right|^{q_3}\right]
     \right)^{\!\frac{1}{q_3}}
    \\&
    \leq
     \bigg(\tfrac{1}{q_3/p}\smallint_{\frac{1-q_3/p}{q_3/p}}^{\infty}\tfrac{s^{q_3/p}}{(s+1)^2}\,ds+1\bigg)^{\frac{1}{q_3/p}}
    \lim_{(0,\infty)\ni \eps\to0}\Bigg(
    \E\Bigg[
     \left|
     \eps+V(0,X_0)
     +\int_0^{\tau}
     \tfrac{ \beta_s }
     {\exp\left(\smallint_0^s \alpha_u\,du \right)}\,ds
     \right|^{q_3}
     \Bigg]
    \Bigg)^{\frac{1}{q_3}}
    \\&
    =
     \bigg(\tfrac{p}{q_3}\smallint_{\frac{p-q_3}{q_3}}^{\infty}\tfrac{s^{\frac{q_3}{p}}}{(s+1)^2}\,ds+1\bigg)^{\frac{p}{q_3}}
     \Bigg(
    \E\Bigg[
     \left|
     V(0,X_0)
     +\int_0^{\tau}
     \tfrac{ \beta_s }
     {\exp\left(\smallint_0^s \alpha_u\,du \right)}\,ds
     \right|^{q_3}
     \Bigg]
    \Bigg)^{\frac{1}{q_3}}.
   \end{split}     \end{equation}
   Finally, this, nonnegativity of $\alpha$, and H\"older's inequality prove that
   \begin{align}\nonumber
     &\left\|\sup_{t\in[0,\tau]}V(t,X_t)\right\|_{L^{q_1}(\P;\R)}
     \leq \left\|
     \exp\left(\smallint_0^{\tau} \alpha_u\,du \right)
     \sup_{t\in[0,\tau]}\tfrac{V(t,X_t)} {\exp\left(\smallint_0^t \alpha_u\,du \right)}
     \right\|_{L^{q_1}(\P;\R)}
     \\&
     \leq \left\|
     \exp\left(\smallint_0^{\tau} \alpha_u\,du \right)
     \right\|_{L^{q_2}(\P;\R)}
     \left\|\sup_{t\in[0,\tau]}\tfrac{V(t,X_t)} {\exp\left(\smallint_0^t \alpha_u\,du \right)}\right\|_{L^{q_3}(\P;\R)}
     \\&
     \leq \left\|
     \exp\left(\smallint_0^{\tau} \alpha_u\,du \right)
     \right\|_{L^{q_2}(\P;\R)}
     \bigg(\tfrac{p}{q_3}\smallint_{\frac{p-q_3}{q_3}}^{\infty}\tfrac{s^{\frac{q_3}{p}}}{(s+1)^2}\,ds+1\bigg)^{\frac{p}{q_3}}
     \left\|
     V(0,X_0)
     +\int_0^{\tau}
     \tfrac{ \beta_s }
     {\exp\left(\smallint_0^s \alpha_u\,du \right)}\,ds
     \right\|_{L^{q_3}(\P;\R)}.\nonumber
   \end{align}
This establishes item~\eqref{item:2}
and completes the proof of Theorem~\ref{thm:moments:hilbert.m}.
\end{proof}

\begin{cor}[A stochastic Gronwall inequality for It\^o processes]\label{c:moments:hilbert.m}
Assume Setting~\ref{sett:moments.m},
let $p\in[2,\infty)$,
and
let
$\alpha,\beta \colon [0,T] \times \Omega \to [0,\infty]$
be $\mathcal{B}([0,T])\otimes\mathcal{F}$/$\mathcal{B}([0,\infty])$-measurable and adapted stochastic processes
which satisfy $\P$-a.s.\ that
$\int_0^{\tau}|\alpha_u|\,du<\infty$
and which satisfy that $\P$-a.s.\ it holds for Lebesgue-almost all $t\in[0,\tau]$ that
\begin{equation}  \begin{split}\label{eq:lin.growth.c}
  \langle X_t,a_t\rangle_H+\tfrac{1}{2}\|b_t\|_{\HS(U,H)}^2
  +\tfrac{p-2}{2}\tfrac{\|\langle X_t,b_t\rangle_H\|_{\HS(U,\R)}^2}{\|X_t\|_H^2}
  \leq \alpha_t\|X_t\|_H^2+\tfrac{1}{2}|\beta_t|^2.
\end{split}     \end{equation}
 Then
   \begin{enumerate}[(i)]
     \item\label{item:lingrowth1}
   it holds for all $q_1,q_2\in(0,\infty]$
   with $\tfrac{1}{q_1}=\tfrac{1}{q_2}+\tfrac{1}{p}$ that
   \begin{equation}  \begin{split}\label{eq:c:moments.hilbert.m}
     &\|X_{\tau}\|_{L^{q_1}(\P;H)}
     \leq
     \bigg\|
     \exp\Big(\smallint_0^{ {\tau}} \alpha_u\,du \Big)
     \bigg\|_{L^{q_2}(\P;\R)}
     \left(
     \left\|
     X_0
     \right\|_{L^p(\P;H)}^2
     +\int_0^T
     \Big\|
     \tfrac{ \1_{[0,\tau]}(s)\beta_s }
     {\exp\left(\smallint_0^s \alpha_u\,du \right)}
     \Big\|_{L^{p}(\P;\R)}^2\,ds
     \right)^{\!\frac{1}{2}}
   \end{split}     \end{equation}
   and
   \item\label{item:lingrowth2}
   it holds for all $q_1,q_2,q_3\in(0,\infty]$
   with $q_3< p$ and $\tfrac{1}{q_1}=\tfrac{1}{q_2}+\tfrac{1}{q_3}$ that
   \begin{align}\label{eq:c:moments.hilbert.m.sup}
     &\left\|\sup_{s\in[0,\tau]}\|X_s\|_{H}\right\|_{L^{q_1}(\P;\R)}
     \leq
      \bigg(\tfrac{p}{q_3}\smallint_{\frac{p-q_3}{q_3}}^{\infty}\tfrac{s^{\frac{q_3}{p}}}{(s+1)^2}\,ds+1\bigg)^{\frac{p}{2q_3}}
     \\&
     \cdot
     \bigg\|
     \exp\Big(\smallint_0^{ {\tau}} \alpha_u\,du \Big)
     \bigg\|_{L^{q_2}(\P;\R)}
     \left\|
     \left(
     \|X_0\|_{H}^2
     +\int_0^{\tau} 
     \Big|
     \tfrac{ \beta_s }
     {\exp\left(\smallint_0^s \alpha_u\,du \right)}
     \Big|^2\,ds
     \right)^{\!\frac{1}{2}}
     \right\|_{L^{q_3}(\P;\R)}.
   \end{align}
   \end{enumerate}
\end{cor}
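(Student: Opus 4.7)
The plan is to deduce Corollary~\ref{c:moments:hilbert.m} from Theorem~\ref{thm:moments:hilbert.m} by specializing the Lyapunov function to $V(s,x) = \|x\|_H^2$ (which is smooth and nonnegative, hence lies in $C^{1,2}([0,T]\times H,[0,\infty))$) and running the theorem at exponent $\tilde p := p/2 \in [1,\infty)$ with rescaled driving processes.

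First I would verify the growth hypothesis~\eqref{eq:ass.powers2}. One has $(\tfrac{\partial}{\partial s}V)(s,x) = 0$, $(\tfrac{\partial}{\partial x}V)(s,x)h = 2\langle x,h\rangle_H$, and $(\textup{Hess}_xV)(s,x) = 2\,\Id_H$, so $(\tfrac{\partial}{\partial x}V)(s,X_s)\,a_s = 2\langle X_s,a_s\rangle_H$, $\tfrac{1}{2}\textup{trace}(b_sb_s^*(\textup{Hess}_xV)(s,X_s)) = \|b_s\|_{\HS(U,H)}^2$, and $\|(\tfrac{\partial}{\partial x}V)(s,X_s)\,b_s\|_{\HS(U,\R)}^2 = 4\|\langle X_s,b_s\rangle_H\|_{\HS(U,\R)}^2$. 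Inserted into the left-hand side of~\eqref{eq:ass.powers2} at $\tilde p = p/2$, these produce exactly twice the left-hand side of~\eqref{eq:lin.growth.c}, and therefore the hypothesis~\eqref{eq:lin.growth.c} yields~\eqref{eq:ass.powers2} with the processes $2\alpha$ in place of $\alpha$ and $|\beta|^2$ in place of $\beta$.

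Second, I would apply Theorem~\ref{thm:moments:hilbert.m} with this $V$, exponent $\tilde p$, and processes $2\alpha$ and $|\beta|^2$, using the substitutions $\tilde q_i := q_i/2$ for $i\in\{1,2,3\}$. The harmonic relation $\tfrac{1}{\tilde q_1} = \tfrac{1}{\tilde q_2} + \tfrac{1}{\tilde p}$ becomes $\tfrac{1}{q_1} = \tfrac{1}{q_2} + \tfrac{1}{p}$, and $\tilde q_3 < \tilde p$ becomes $q_3 < p$. For item~\eqref{item:lingrowth1}, item~\eqref{item:1} of Theorem~\ref{thm:moments:hilbert.m} gives a bound on $\|\|X_\tau\|_H^2\|_{L^{q_1/2}(\P;\R)}$; the elementary identities $\|\xi^2\|_{L^{r}(\P;\R)} = \|\xi\|_{L^{2r}(\P;\R)}^2$ and $\|\exp(2\gamma)\|_{L^{r}(\P;\R)} = \|\exp(\gamma)\|_{L^{2r}(\P;\R)}^2$ applied throughout rewrite this as the square of~\eqref{eq:c:moments.hilbert.m}, whence a square root finishes the proof. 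For item~\eqref{item:lingrowth2}, item~\eqref{item:2} yields the corresponding bound on $\|\sup_{s\in[0,\tau]}\|X_s\|_H^2\|_{L^{q_1/2}(\P;\R)}$; the constant-generating quantities $\tfrac{\tilde p}{\tilde q_3} = \tfrac{p}{q_3}$, $\tfrac{\tilde p-\tilde q_3}{\tilde q_3} = \tfrac{p-q_3}{q_3}$, and $\tfrac{\tilde q_3}{\tilde p} = \tfrac{q_3}{p}$ make the constant from~\eqref{eq:thm:moments:hilbert.m.sup} coincide with the one in~\eqref{eq:c:moments.hilbert.m.sup} raised to the power $\tfrac{p}{q_3}$, so that a final square root produces the desired exponent $\tfrac{p}{2q_3}$ and brings the $L^{q_3/2}$-norm of $\|X_0\|_H^2 + \int_0^\tau \tfrac{|\beta_s|^2}{\exp(\int_0^s 2\alpha_u\,du)}\,ds$ into the $L^{q_3}$-norm of its square root.

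No substantive obstacle arises; the proof is essentially a bookkeeping specialization of the main theorem. The only care needed is to track the factor of $2$ produced by the Hessian of the squared norm and to keep $L^r$-indices and absolute constants aligned under the rescaling $\tilde p = p/2$, $\tilde q_i = q_i/2$.
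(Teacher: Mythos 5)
Your proposal is correct and follows essentially the same route as the paper: both take $V(s,x)=\|x\|_H^2$, note that the Hessian produces a factor of $2$ so that condition~\eqref{eq:lin.growth.c} becomes~\eqref{eq:ass.powers2} at exponent $p/2$ with $2\alpha$ and $|\beta|^2$, apply Theorem~\ref{thm:moments:hilbert.m} with the halved exponents $q_i/2$, and then unwind the $L^{r}$-norms and the Burkholder-type constant via the square-root identities. The only cosmetic difference is that you spell out the derivatives of $V$ explicitly while the paper presents the identity in one compressed display; the substance is identical.
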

\begin{proof}[Proof of Corollary~\ref{c:moments:hilbert.m}]
Assumption~\eqref{eq:lin.growth.c} implies that
$\P$-a.s.\ it holds for Lebesgue-almost all $t\in[0,\tau]$ that
\begin{equation}  \begin{split}
 &2 \langle X_t,a_t\rangle_H
     +\tfrac{1}{2}\textup{trace}\left(b_sb_s^{*}2\textup{Id}_H\right)
  +\tfrac{\frac{p}{2}-1}{2}\tfrac{\|2\langle X_t,b_t\rangle_H\|_{\HS(U,\R)}^2}{\|X_t\|_H^2}
  \\&
 =2\left[ \langle X_t,a_t\rangle_H+\tfrac{1}{2}\|b_t\|_{\HS(U,H)}^2
  +\tfrac{p-2}{2}\tfrac{\|\langle X_t,b_t\rangle_H\|_{\HS(U,\R)}^2}{\|X_t\|_H^2}
  \right]
  \leq 2\alpha_t\|X_t\|_H^2+|\beta_t|^2.
\end{split}     \end{equation}
Theorem~\ref{thm:moments:hilbert.m}
(applied with
$p=\tfrac{p}{2}$,
$V(s,x)=\|x\|_{H}^2$, $\alpha_s=2\alpha_s$, $\beta_s=|\beta_s|^2$,
$q_1=\tfrac{q_1}{2}$,
$q_2=\tfrac{q_2}{2}$,
$q_3=\tfrac{q_3}{2}$
for all $s\in[0,T]$, $x\in O$
in the notation of
Theorem~\ref{thm:moments:hilbert.m})
yields that
   it holds for all $q_1,q_2\in(0,\infty]$
   with  $\tfrac{1}{q_1}=\tfrac{1}{q_2}+\tfrac{1}{p}$ that
\begin{equation}  \begin{split}
     &\|X_{{\tau}}\|_{L^{q_1}(\P;H)}^2
     =\left\|\|X_{{\tau}}\|_H^2\right\|_{L^{\frac{{q_1}}{2}}(\P;H)}
     \\&
     \leq
     \bigg\|
     \exp\Big(\smallint_0^{\tau}2 \alpha_u\,du \Big)
     \bigg\|_{L^{\frac{q_2}{2}}(\P;\R)}
     \left(
     \left\|
     \|X_0\|_{H}^2
     \right\|_{L^{\frac{p}{2}}(\P;\R)}
     +\int_0^T
     \Big\|
     \tfrac{ \1_{[0,\tau]}(s)|\beta_s|^2 }
     {\exp\left(\smallint_0^s 2\alpha_u\,du \right)}
     \Big\|_{L^{\frac{p}{2}}(\P;\R)}\,ds
     \right)
     \\&
     =
     \bigg\|
     \exp\Big(\smallint_0^{ {\tau}} \alpha_u\,du \Big)
     \bigg\|_{L^{q_2}(\P;\R)}^2
     \left(
     \left\|
     X_0
     \right\|_{L^p(\P;H)}^2
     +\int_0^T
     \Big\|
     \tfrac{ \1_{[0,\tau]}(s)\beta_s }
     {\exp\left(\smallint_0^s \alpha_u\,du \right)}
     \Big\|_{L^{p}(\P;\R)}^2\,ds
     \right)
\end{split}     \end{equation}
and yields that
   it holds for all $q_1,q_2,q_3\in(0,\infty]$
   with $q_3< p$ and $\tfrac{1}{q_1}=\tfrac{1}{q_2}+\tfrac{1}{q_3}$ that
   \begin{equation}  \begin{split}
     &\Big\|\sup_{s\in[0,\tau]}\|X_s\|_{H}\Big\|_{L^{q_1}(\P;\R)}^2
     =\Big\|\sup_{s\in[0,\tau]}\|X_s\|_{H}^2\Big\|_{L^{\frac{q_1}{2}}(\P;\R)}
     \leq \bigg(\tfrac{p/2}{q_3/2}\smallint_{\frac{p/2-q_3/2}{q_3/2}}^{\infty}\tfrac{s^{\frac{q_3/2}{p/2}}}{(s+1)^2}\,ds+1\bigg)^{\frac{p/2}{q_3/2}}
     \\&
     \cdot
     \bigg\|
     \exp\Big(\smallint_0^{ {\tau}}2 \alpha_u\,du \Big)
     \bigg\|_{L^{\frac{q_2}{2}}(\P;\R)}
     \left\|
     \|X_0\|_{H}^2
     +\int_0^{\tau}
     \tfrac{ |\beta_s|^2 }
     {\exp\left(\smallint_0^s 2\alpha_u\,du \right)}
     \,ds
     \right\|_{L^{\frac{q_3}{2}}(\P;\R)}
     \\&
     = \bigg(\tfrac{p}{q_3}\smallint_{\frac{p-q_3}{q_3}}^{\infty}\tfrac{s^{\frac{q_3}{p}}}{(s+1)^2}\,ds+1\bigg)^{\frac{p}{q_3}}
     \\&
     \cdot
     \bigg\|
     \exp\Big(\smallint_0^{ {\tau}} \alpha_u\,du \Big)
     \bigg\|_{L^{q_2}(\P;\R)}^2
     \left\|
     \left(
     \|X_0\|_{H}^2
     +\int_0^{\tau}
     \Big|
     \tfrac{ \beta_s }
     {\exp\left(\smallint_0^s \alpha_u\,du \right)}
     \Big|^2\,ds
     \right)^{\!\frac{1}{2}}
     \right\|_{L^{q_3}(\P;\R)}^2.
   \end{split}     \end{equation}
  This completes the proof of Corollary~\ref{c:moments:hilbert.m}.
\end{proof}

\section{Applications of the stochastic Gronwall-Lyapunov inequality}
\label{sec:applications}

In this section we apply the stochastic Gronwall-Lyapunov inequality,
i.e.\ Theorem~\ref{thm:moments:hilbert.m},
to improve existing results on
moment estimates for SDEs in Subsection~\ref{sec:moments},
exponential moment estimates for SDEs in Subsection~\ref{sec:exp.moments},
strong local Lipschitz continuity in the initial value in
Subsection~\ref{sec:localLip},
strong completeness for SDEs in 
Subsection~\ref{sec:completeness},
and strong perturbation estimates in
Subsection~\ref{sec:perturbation}.
Throughout this section we use the notation from Subsection~\ref{sec:notation}
and we frequently use the following setting.
\begin{sett}\label{sett:applications}
Let 
  $( H, \left< \cdot , \cdot \right>_H, \left\| \cdot \right\|_H )$
  and $( U, \left< \cdot , \cdot \right>_U, \left\| \cdot \right\|_U )$ be separable $\R$-Hilbert spaces,
  let $T\in (0,\infty)$,
let $(\Omega, \F, \P)$ be a probability space with a normal filtration $(\mathbb{F}_{t})_{t\in [0,T]}$,
let $(W_t)_{t \in [0, T]}$ be an
$\Id_U$-cylindrical $(\mathbb{F}_t)_{t\in[0,T]}$-Wiener process,
  let $O \subseteq H$ be an open set,
  let $\mathcal{O}\in\mathcal{B}(O)$,
  let $ \mu \colon [0,T] \times \mathcal{O} \to H $
  and 
  $ \sigma \colon [0,T] \times \mathcal{O} \to \HS(U,H)$
  be Borel measurable functions,
  let $\tau\colon\Omega\to[0,T]$ be a stopping time,
  and
  let
  $
    X \colon [0,T] \times \Omega \to \mathcal{O}
  $
  be an adapted stochastic
  process with continuous sample paths
  which satisfies that $ \P $-a.s.\ it holds that
  $
    \smallint_0^{ \tau } \| \mu( s, X_s ) \|_H
    + \| \sigma( s, X_s ) \|_{\HS(U,H)}^2
    \, ds < \infty
  $
  and which satisfies that for all $t\in[0,T]$ it holds $\P$-a.s.\ that
  \begin{equation}  \begin{split}\label{eq:appl.X}
    X_{\min\{t,\tau\} } = 
    X_0
    + \smallint_0^{ t }\1_{[0,\tau]}(s) \mu(s, X_s ) \, ds
    +
    \smallint_0^{ t }\1_{[0,\tau]}(s) \sigma(s, X_s ) \, dW_s.
  \end{split}     \end{equation}
\end{sett}

\subsection{Moment estimates for SDEs}\label{sec:moments}
The following corollary, Corollary~\ref{c:powers.norm},
provides marginal and uniform Lyapunov-type estimates for solutions
of SDEs.
The
marginal Lyapunov-type estimate~\eqref{eq:moments.marginal} below
is essentially known in the literature;
see, e.g., Cox et al.~\cite[Lemma 2.2]{CoxHutzenthalerJentzen2013}
or Gy\"ongy \& Krylov \cite[Lemma 2.2]{GyoengyKrylov1996}.
To the best of our knowledge, the uniform Lyapunov-type esimate~\eqref{eq:moments.uniform} below is new.
In the literature, uniform moment estimates are derived
with the help of a Burkholder-Davis-Gundy inequality.
In a number of situations, finiteness of uniform $L^p$-moments could not be established for all $p\in(1,\infty)$
for which marginal $L^p$-moments are finite; e.g.\ for the $3/2$-model of Heston~\cite{Heston1997} and Platen~\cite{Platen1997}
or for the $4/2$-model of Grasselli~\cite{Grasselli2017}.
In many situations where upper bounds for uniform moments could be established,
these 
are less sharp than~\eqref{eq:moments.uniform}; see, e.g., Proposition 2.27 in
Cox et al.~\cite{CoxHutzenthalerJentzen2013}
(with $V$ depending only on the first component).
  Corollary~\ref{c:powers.norm}
  follows directly from
  Corollary~\ref{c:moments:hilbert.m}
  (applied with
  $a_s=\mu(s,X_s)$,
  $b_s=\sigma(s,X_s)$,
  $\alpha_s=\alpha$,
  $\beta_s=\beta$,
  $q_2=\infty$
  for all $s\in[0,T]$
  in the notation of
  Corollary~\ref{c:moments:hilbert.m}).
\begin{cor}[Moment estimates for SDEs]\label{c:powers.norm}
  Assume Setting~\ref{sett:applications},
  let $p\in[2,\infty)$, $\alpha,\beta\in[0,\infty)$,
  and assume that
  for all $t\in[0,T]$, $x\in \mathcal{O}$
  it holds that
  \begin{equation}  \begin{split}\label{eq:ass.powers.norm}
    &
    \langle x,
    \mu(t,x)\rangle_H
    +\tfrac{1}{2}\|\sigma(t,x)\|_{\HS(U,H)}^2
    +\tfrac{p-2}{2}\tfrac{\|
    \langle x,\sigma(t,x)\rangle_H\|_{\HS(U,\R)}^2}
    {\|x\|_H^2}
    \leq \alpha\|x\|^2_H+\tfrac{1}{2}\beta^2.
  \end{split}     \end{equation}
Then
\begin{enumerate}[(i)]
  \item it holds that
  \begin{equation}  \begin{split}\label{eq:moments.marginal}
    \left\|X_{\tau}\right\|_{L^p(\P;H)}
    \leq 
    e^{\alpha T}
    \left(
    \left\|X_0\right\|_{L^p(\P;H)}^2+\int_0^T \tfrac{\beta^2}{e^{2\alpha s}}\,ds\right)^{\!\frac{1}{2}}
  \end{split}     \end{equation}
  and
  \item  it holds for all $q\in(0,p)$ that
  \begin{equation}  \begin{split}\label{eq:moments.uniform}
    \left\|\sup_{t\in[0,\tau]}\left\|X_t\right\|_H\right\|_{L^q(\P;\R)}
    \leq
    e^{\alpha T}
    \left\|\left(\|X_0\|_H^2+\int_0^T\tfrac{\beta^2}{e^{2\alpha s}}\,ds\right)^{\!\frac{1}{2}}\right\|_{L^q(\P;\R)}
    \left(\tfrac{p}{q}\int_{\frac{p-q}{q}}^{\infty}\tfrac{s^{\frac{q}{p}}}{(s+1)^2}\,ds+1\right)^{\!\frac{p}{q}}.
  \end{split}     \end{equation}
\end{enumerate}
\end{cor}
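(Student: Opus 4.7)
The plan is to deduce Corollary~\ref{c:powers.norm} directly from Corollary~\ref{c:moments:hilbert.m} by specializing to deterministic coefficients $\alpha_s \equiv \alpha$ and $\beta_s \equiv \beta$ and to the It\^o process built from $\mu$ and $\sigma$ evaluated along $X$. Concretely, I will work in Setting~\ref{sett:moments.m} with the choices $a_s = \mu(s,X_s)$ and $b_s = \sigma(s,X_s)$, so that the dynamics~\eqref{eq:appl.X} of $X$ in Setting~\ref{sett:applications} match the required stopped It\^o dynamics of Setting~\ref{sett:moments.m}. The pathwise hypothesis~\eqref{eq:ass.powers.norm}, evaluated at $(t,X_t)$, is exactly the one-sided growth condition~\eqref{eq:lin.growth.c} with the constant stochastic processes $\alpha_t \equiv \alpha$ and $\beta_t \equiv \beta$; thus all hypotheses of Corollary~\ref{c:moments:hilbert.m} hold.

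For item (i), I will apply~\eqref{eq:c:moments.hilbert.m} with the choice $q_2 = \infty$ (so that $q_1 = p$). Since $\alpha$ is a nonnegative constant and $\tau \le T$, one has $\exp(\int_0^\tau \alpha \, du) \le e^{\alpha T}$ pointwise, and therefore $\|\exp(\int_0^\tau \alpha \, du)\|_{L^\infty(\P;\R)} \le e^{\alpha T}$. Moreover, the deterministic factor $\beta / \exp(\int_0^s \alpha \, du)$ equals $\beta e^{-\alpha s}$, and the indicator $\1_{[0,\tau]}(s)$ may be dropped since the right-hand side only grows when we integrate over a larger interval. Substituting yields~\eqref{eq:moments.marginal} immediately.

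For item (ii), I will apply~\eqref{eq:c:moments.hilbert.m.sup} with $q_3 = q$ (noting $q < p$) and again $q_2 = \infty$, so that $q_1 = q$. The same pointwise bound $\exp(\int_0^\tau \alpha\,du) \le e^{\alpha T}$ pulls the exponential factor out of the $L^{q_2}$ norm as $e^{\alpha T}$, while the integrand $\beta / \exp(\int_0^s \alpha \, du) = \beta e^{-\alpha s}$ inside the bracket is deterministic and is bounded above by its integral over $[0,T]$ (since $\tau \le T$ and the integrand is nonnegative). This directly produces~\eqref{eq:moments.uniform} with the constant $\bigl(\tfrac{p}{q}\int_{(p-q)/q}^{\infty} s^{q/p}(s+1)^{-2}\,ds + 1\bigr)^{p/q}$.

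There is no real obstacle here; the whole proof is a substitution exercise, and the only minor point to be careful about is the substitution of the three parameters $(p, q_1, q_2, q_3)$ in the general corollary to match the constants appearing in Corollary~\ref{c:powers.norm}, together with the monotonicity step that replaces the stopped integral up to $\tau$ by the full integral up to $T$ in the deterministic upper bounds.
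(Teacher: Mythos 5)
Your proof follows exactly the same route as the paper: the paper states that Corollary~\ref{c:powers.norm} follows directly from Corollary~\ref{c:moments:hilbert.m} applied with $a_s=\mu(s,X_s)$, $b_s=\sigma(s,X_s)$, $\alpha_s=\alpha$, $\beta_s=\beta$, and $q_2=\infty$, and omits the proof; your write-up fills in precisely those substitutions.

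One small slip is worth flagging. For item~(ii) you claim that the substitution ``directly produces'' the constant $\bigl(\tfrac{p}{q}\smallint_{(p-q)/q}^{\infty} s^{q/p}(s+1)^{-2}\,ds+1\bigr)^{p/q}$, but Corollary~\ref{c:moments:hilbert.m}(ii) with $q_3=q$ actually yields the exponent $\tfrac{p}{2q}$, not $\tfrac{p}{q}$ (compare also the intro estimate~\eqref{eq:c:moments.hilbert.m.sup.intro}). Since the base of that power is always at least $1$, one has $(\cdots)^{p/(2q)}\leq(\cdots)^{p/q}$, so the stated inequality~\eqref{eq:moments.uniform} follows a fortiori; your argument is correct but in fact establishes a slightly sharper bound than the one given in the corollary.
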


\subsection{Exponential moment estimates for SDEs}\label{sec:exp.moments}
For a number of problems involving SDEs with non-globally monotone coefficients,
it is useful to estimate exponential moments;
see, e.g., Subsection~\ref{sec:localLip}
or Subsection~\ref{sec:perturbation} below.
The following corollary, Corollary~\ref{c:exp.moments},
applies Theorem~\ref{thm:moments:hilbert.m}
to derive suitable exponential moment estimates.
Condition~\eqref{eq:condition.exp.moment}
and the marginal exponential moment estimate~\eqref{eq:exp:mom:estimate}
have been found and derived in
Cox et al.~\cite[Proposition 2.3 and Corollary 2.4]{CoxHutzenthalerJentzen2013}.
To the best of our knowledge, the uniform exponential
moment estimate~\eqref{eq:exp:mom:uniform} is new.

\begin{cor}[Exponential moment estimates for SDEs]\label{c:exp.moments}
  Assume Setting~\ref{sett:applications},
  let $\bar{U}\colon[0,T]\times \mathcal{O}\to\R$ be Borel-measurable
  and satisfy $\int_0^{\tau}|\bar{U}(s,X_s)|\,ds<\infty$,
  let 
  $ U=(U(s,x))_{s\in[0,T],x\in O} \in C^{ 1,2 }( [0,T]\times O, [0,\infty) ) $,
  let $\alpha\in\R$,
  and assume that
  for all $(s,x)\in\cup_{\omega\in\Omega}\{(t,X_t(\omega))\in[0,T]\times \mathcal{O}\colon t\in[0,\tau(\omega)]\}$
  it holds that
  \begin{equation}  \begin{split}\label{eq:condition.exp.moment}
   (\tfrac{\partial}{\partial s}U)(s,x)
    +
    (\tfrac{\partial}{\partial x}U)(s,x)\,
    \mu(s,x)
    &+\tfrac{1}{2}\textup{trace}\Big(
    (\sigma\cdot\sigma^*)(s,x)\,
    (\textup{Hess}_xU)(s,x)
    \Big)
    \\&
    +
    \tfrac{1}{2e^{\alpha s}}\big\|(\tfrac{\partial}{\partial x}U)(s,x)\sigma(s,x)\big\|_{\HS(U,\R)}^2
    +\bar{U}(s,x)\leq \alpha U(s,x).
  \end{split}     \end{equation}
  Then
\begin{enumerate}[(i)]
  \item\label{item:expmom1}
  it holds that
  \begin{equation}
  \label{eq:exp:mom:estimate}
  \begin{split}
  &
    \E\!\left[
      \exp\!\left(
          \tfrac{U( \tau, X_{ \tau } )}{\exp(\alpha \tau)}
        +
        \smallint_0^{ \tau }
            \tfrac{\bar{U}( s, X_s ) }{\exp(\alpha s)}
        \, ds
      \right)
    \right]
  \leq
    \E\Big[\!
      \exp\!\big( 
        U(0,X_0) 
      \big)
    \Big],
  \end{split}
  \end{equation}
  and
  \item\label{item:expmom2}
  it holds for all $q\in(0,1)$ that
  \begin{equation}
  \label{eq:exp:mom:uniform}
  \begin{split}
  &
    \E\!\left[
    \sup_{t\in[0,\tau]}
      \exp\!\left(
          \tfrac{qU( t, X_{ t } )}{\exp(\alpha t)}
        +
        \smallint_0^{ t }
            \tfrac{q\bar{U}( s, X_s ) }{\exp(\alpha s)}
        \, ds
      \right)
    \right]
  \leq
    \E\Big[\!
      \exp\!\big( 
        qU(0,X_0) 
      \big)
    \Big]
    \Big(
    \tfrac{1}{q}\smallint_{\frac{1-q}{q}}^{\infty}\tfrac{s^{q}}{(s+1)^2}\,ds+1\Big).
  \end{split}
  \end{equation}
\end{enumerate}
\end{cor}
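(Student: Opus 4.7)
The plan is to reduce Corollary~\ref{c:exp.moments} to Theorem~\ref{thm:moments:hilbert.m} applied in the critical case $p=1$ (in which the $\tfrac{p-1}{2}\|(\tfrac{\partial}{\partial x}V)b\|_{\HS(U,\R)}^{2}/V$ term in~\eqref{eq:ass.powers2} disappears) with a Lyapunov function that is precisely the exponential appearing on the left-hand side of the desired conclusion. Since that exponential depends on the running integral $\int_0^s\bar U(u,X_u)/e^{\alpha u}\,du$, which is not a function of $X_s$ alone, I would first enlarge the state space: set $Z_s:=\int_0^{\min\{s,\tau\}}\bar U(u,X_u)/e^{\alpha u}\,du$ and $\tilde X_s:=(X_s,Z_s)$, so that $\tilde X$ is an It\^o process in $O\times\R\subseteq H\oplus\R$ with drift $(\mu(s,X_s),\bar U(s,X_s)/e^{\alpha s})$ and diffusion $(\sigma(s,X_s),0)\in\HS(U,H\oplus\R)$. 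The assumption $\int_0^\tau|\bar U(s,X_s)|\,ds<\infty$ together with Setting~\ref{sett:applications} ensures that $\tilde X$ fits Setting~\ref{sett:moments.m} on the Hilbert space $H\oplus\R$.

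Next I would introduce the Lyapunov function
\[
  V(s,x,z):=\exp\!\left(\tfrac{U(s,x)}{e^{\alpha s}}+z\right)\in C^{1,2}\big([0,T]\times(O\times\R),[0,\infty)\big)
\]
and compute the drift of $V(s,\tilde X_s)$ by It\^o's formula. The identities $\partial_sV=V(\partial_sU-\alpha U)/e^{\alpha s}$, $\partial_xV=V(\tfrac{\partial}{\partial x}U)/e^{\alpha s}$, $\partial_zV=V$, and
\[
  \mathrm{tr}\!\left(\sigma\sigma^{*}\mathrm{Hess}_xV\right)
  =\tfrac{V}{e^{\alpha s}}\mathrm{tr}\!\left(\sigma\sigma^{*}\mathrm{Hess}_xU\right)
  +\tfrac{V}{e^{2\alpha s}}\big\|(\tfrac{\partial}{\partial x}U)\sigma\big\|_{\HS(U,\R)}^{2}
\]
(with no cross $xz$-contribution because the $z$-diffusion vanishes) rearrange the drift of $V$ into $V/e^{\alpha s}$ times the left-hand side of~\eqref{eq:condition.exp.moment} minus $\alpha U$. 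Assumption~\eqref{eq:condition.exp.moment} is therefore equivalent to the drift of $V$ being nonpositive, so~\eqref{eq:ass.powers2} holds for $V$ with $p=1$ and $\alpha_s\equiv 0\equiv\beta_s$.

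With this setup the two conclusions follow from Theorem~\ref{thm:moments:hilbert.m}. Item~(i) of the theorem with $q_1=1$, $q_2=\infty$ yields $\E[V(\tau,\tilde X_\tau)]\leq\E[V(0,X_0,0)]=\E[\exp(U(0,X_0))]$, which unfolds to~\eqref{eq:exp:mom:estimate}. Item~(ii) with $p=1$, $q_3=q\in(0,1)$, $q_2=\infty$, and $q_1=q$, followed by raising to the $q$-th power (so that the constant $(\tfrac{1}{q}\int_{(1-q)/q}^{\infty}s^{q}/(s+1)^{2}\,ds+1)^{1/q}$ becomes exactly the sharp factor displayed in~\eqref{eq:exp:mom:uniform}), gives~\eqref{eq:exp:mom:uniform}.

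The only nontrivial point I anticipate is bookkeeping around the enlarged state space: verifying measurability/adaptedness of the extended drift, the integrability condition $\int_0^\tau(\|\mu(s,X_s)\|_H+|\bar U(s,X_s)|/e^{\alpha s})\,ds<\infty$, and confirming that~\eqref{eq:ass.powers2} is read correctly when both $\alpha_s$ and $\beta_s$ are identically zero. Once the drift of $V$ is computed and its nonpositivity recorded, both estimates follow mechanically from Theorem~\ref{thm:moments:hilbert.m} and the algebraic identification $V^q=\exp(qU/e^{\alpha\cdot}+qZ)$.
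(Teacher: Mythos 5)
Your proposal is correct and follows essentially the same route as the paper's own proof: you enlarge the state space by the running integral $Z_s=\int_0^{\min\{s,\tau\}}\bar U(u,X_u)e^{-\alpha u}\,du$, take the Lyapunov function $V(s,x,z)=\exp\!\big(U(s,x)e^{-\alpha s}+z\big)$ on $H\oplus\R$, verify from~\eqref{eq:condition.exp.moment} that the generator of $V$ along $(X,Z)$ is nonpositive so that~\eqref{eq:ass.powers2} holds with $p=1$ and $\alpha\equiv\beta\equiv0$, and then invoke Theorem~\ref{thm:moments:hilbert.m} with $q_2=\infty$ (and $q_3=q$, raising to the $q$-th power, for item~(ii)). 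This is precisely the argument in the paper.
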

\begin{proof}[Proof of Corollary~\ref{c:exp.moments}]
  Throughout this proof 
  let $V\colon[0,T]\times O\times\R\to[0,\infty)$
  and $Y\colon[0,T]\times\Omega\to\R$
  satisfy
  for all $t\in[0,T]$, $x\in O$, $y\in\R$
  that $V(t,x,y)=\exp(U(t,x)e^{-\alpha t}+y)$
  and $Y_t=\int_0^{\min\{t,\tau\}}\bar{U}(s,X_s)e^{-\alpha s}\,ds$.
  Assumption~\eqref{eq:condition.exp.moment}
  yields for all $s\in[0,\tau]$ that
  \begin{equation}  \begin{split}
    &(\tfrac{\partial}{\partial s}V)(s,X_s,Y_s)
    +
    (\tfrac{\partial}{\partial (x,y)}V)(s,X_s,Y_s)\,
    (\mu(s,X_s), \bar{U}(s,X_s)e^{-\alpha s})
    \\&\quad
    +\tfrac{1}{2}\textup{trace}\Big(
    (\sigma(s,X_s), 0)(\sigma(s,X_s), 0)^{*}\,
    (\textup{Hess}_{(x,y)}V)(s,X_s,Y_s)
    \Big)
   \\&
   =V(s,X_s,Y_s)e^{-\alpha s}
   (\tfrac{\partial}{\partial s}U)(s,X_s)
   -V(s,X_s,Y_s)e^{-\alpha s}\alpha U(s,X_s)
    \\&\quad
    +
   V(s,X_s,Y_s)e^{-\alpha s}(\tfrac{\partial}{\partial x}U)(s,X_s)\mu(s,X_s)
   +V(s,X_s,Y_s)e^{-\alpha s}\bar{U}(s,X_s)
    \\&\quad
    +V(s,X_s,Y_s)e^{-\alpha s}\tfrac{1}{2}\textup{trace}\Big(
    \sigma(s,X_s)(\sigma(s,X_s))^{*}\,
    (\textup{Hess}_{x}U)(s,X_s)
    \Big)
    \\&\quad
    +V(s,X_s,Y_s)e^{-2\alpha s}\tfrac{1}{2}\left\|(\tfrac{\partial}{\partial x}U)(s,X_s)\sigma(s,x)\right\|_{\HS(U,\R)}^2
   \\&
   =V(s,X_s,Y_s)e^{-\alpha s}
   \Big(
   (\tfrac{\partial}{\partial s}U)(s,X_s)
    +
    (\tfrac{\partial}{\partial x}U)(s,X_s)\,
    \mu(s,X_s)
    - \alpha U(s,X_s)
    +\bar{U}(s,X_s)
    \\&\qquad\qquad
    +
    \tfrac{1}{2e^{\alpha s}}\big\|(\tfrac{\partial}{\partial x}U)(s,X_s)\sigma(s,X_s)\big\|_{\HS(U,\R)}^2
    +\tfrac{1}{2}\textup{trace}\Big(
    (\sigma\cdot\sigma^*)(s,X_s)\,
    (\textup{Hess}_xU)(s,X_s)
    \Big)
   \Big)
   \\&
   \leq 0.
  \end{split}     \end{equation}
  This and Theorem~\ref{thm:moments:hilbert.m}
  (applied with $H=H\times\R$,
  $\langle (x,r),(y,s)\rangle_H=\langle x,y\rangle_H+ rs$,
  $X_t=(X_t,Y_t)$, $a_t=\1_{[0,\tau]}(t)(\mu(t,X_t),\bar{U}(t,X_t)e^{-\alpha t})$, $b_t=\1_{[0,\tau]}(t)(\sigma(t,X_t),0)$, $V=V$,
  $\alpha_t=0$, $\beta_t=0$, $p=1$, $q_2=\infty$
  for all $x,y\in H$, $r,s\in R$, $t\in[0,T]$
  in the notation of Theorem~\ref{thm:moments:hilbert.m})
  yield item~\eqref{item:expmom1} and item~\eqref{item:expmom2}.
  This finishes the proof of Corollary~\ref{c:exp.moments}.
\end{proof}


Clearly if the solution of
an SDE has finite exponential moments,
then it has finite moments if the starting point
has sufficient  exponential moments.
The marginal moment estimate~\eqref{eq:moment.exp.marginal.condition}
and
the uniform moment estimate~\eqref{eq:moment.exp.uniform.condition}
below
show that it suffices that the starting point has suitable
finite moments if a suitable special case of the exponential moment condition~\eqref{eq:condition.exp.moment}
is satisfied.
In particular, we show in the proof of 
Corollary~\ref{c:exp.moments.condition} below that the exponential moment condition~\eqref{eq:condition.exp.moment.condition}
below implies the moment condition~\eqref{eq:ass.powers.condition} below.

\begin{cor}[Exponential moment condition implies moments]\label{c:exp.moments.condition}
  Assume Setting~\ref{sett:applications},
  let 
  $ U=(U(s,x))_{s\in[0,T],x\in O} \in C^{ 1,2 }( [0,T]\times O, [0,\infty) ) $,
  let $\alpha\in\R$, $\beta\in[0,\infty)$,
  and assume that
  for all $(s,x)\in\cup_{\omega\in\Omega}\{(t,X_t(\omega))\in[0,T]\times \mathcal{O}\colon t\in[0,\tau(\omega)]\}$
  it holds that
  \begin{equation}  \begin{split}\label{eq:condition.exp.moment.condition}
   (\tfrac{\partial}{\partial s}U)(s,x)
    +
    (\tfrac{\partial}{\partial x}U)(s,x)\,
    \mu(s,x)
    &+\tfrac{1}{2}\textup{trace}\Big(
    (\sigma\cdot\sigma^*)(s,x)\,
    (\textup{Hess}_xU)(s,x)
    \Big)
    \\&
    +
    \tfrac{1}{2e^{\alpha s}}\big\|(\tfrac{\partial}{\partial x}U)(s,x)\sigma(s,x)\big\|_{\HS(U,\R)}^2
    \leq \alpha U(s,x)+\beta.
  \end{split}     \end{equation}
  Then
\begin{enumerate}[(i)]
\item
  it holds for all $p\in[1,\infty)$ that
  \begin{equation}  \begin{split}\label{eq:moment.exp.marginal.condition}
    &\E\Big[|p+e^{-\alpha \tau}U(\tau,X_{\tau})|^p\Big]
    \leq
    \E\Big[|p+U(0,X_{0})+\smallint_0^T \tfrac{\beta}{e^{\alpha s}}\,ds|^p\Big]
    \leq
    p^p\E\Big[\exp(U(0,X_{0})+\smallint_0^T \tfrac{\beta}{e^{\alpha s}}\,ds)\Big]
  \end{split}     \end{equation}
  and
\item
  it holds for all $p\in[1,\infty)$, $q\in(0,p)$ that
  \begin{equation}  \begin{split}\label{eq:moment.exp.uniform.condition}
    &\E\!\left[\sup_{t\in[0,\tau]}|p+e^{-\alpha t}U(t,X_{t})|^q\right]
    \leq
    \E\Big[|p+U(0,X_{0})+\smallint_0^T \tfrac{\beta}{e^{\alpha s}}\,ds|^q\Big]
    \Big(
    \tfrac{p}{q}\smallint_{\frac{p-q}{q}}^{\infty}\tfrac{s^{\frac{q}{p}}}{(s+1)^2}\,ds+1\Big)^p.
  \end{split}     \end{equation}
\end{enumerate}
\end{cor}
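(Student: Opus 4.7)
The plan is to apply Theorem~\ref{thm:moments:hilbert.m} to a polynomial Lyapunov function after augmenting the state with a deterministic compensating coordinate that pulls the driving term $\beta e^{-\alpha s}$ \emph{inside} the $L^{q_3}$-norm, in the same spirit as the proof of Corollary~\ref{c:exp.moments}. Concretely, set $Y_t:=\int_t^T \beta e^{-\alpha r}\,dr$, so that $(Y_t)_{t\in[0,T]}$ is a deterministic adapted process satisfying $Y_{\min\{t,\tau\}}=Y_0+\int_0^t\1_{[0,\tau]}(s)(-\beta e^{-\alpha s})\,ds$. The pair $(X_t,Y_t)_{t\in[0,T]}$ is then an It\^o process on $H\times\R$ with drift $(\mu(t,X_t),-\beta e^{-\alpha t})$ and diffusion $(\sigma(t,X_t),0)$, fitting Setting~\ref{sett:moments.m}, and I would define
$\tilde{V}(s,x,y):=p+e^{-\alpha s}U(s,x)+y$,
which lies in $C^{1,2}([0,T]\times O\times\R,[0,\infty))$ and is bounded below by $p>0$.

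The key step is to verify that $\tilde{V}$ satisfies hypothesis~\eqref{eq:ass.powers2} of Theorem~\ref{thm:moments:hilbert.m} with the same $p$ but with $\alpha_s\equiv 0$ and $\beta_s\equiv 0$. Computing the time derivative, gradient, and Hessian of $\tilde{V}$, substituting the drift and diffusion of $(X,Y)$, and then replacing $\tfrac{\partial U}{\partial s}+(\tfrac{\partial}{\partial x}U)\mu+\tfrac{1}{2}\textup{trace}(\sigma\sigma^{*}\textup{Hess}_x U)$ by its upper bound $\alpha U+\beta-\tfrac{1}{2e^{\alpha s}}\|(\tfrac{\partial}{\partial x}U)\sigma\|_{\HS(U,\R)}^2$ from~\eqref{eq:condition.exp.moment.condition}, the $\alpha U$-contributions cancel against the $s$-derivative of $e^{-\alpha s}$ and the $-\beta e^{-\alpha s}$-drift from $Y$ cancels the remaining $+\beta e^{-\alpha s}$. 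What remains of the left-hand side of~\eqref{eq:ass.powers2} is
\[
-\tfrac{1}{2}e^{-2\alpha s}\|(\tfrac{\partial}{\partial x}U)(s,X_s)\sigma(s,X_s)\|_{\HS(U,\R)}^2
+\tfrac{p-1}{2}\,\tfrac{e^{-2\alpha s}\|(\tfrac{\partial}{\partial x}U)(s,X_s)\sigma(s,X_s)\|_{\HS(U,\R)}^2}{\tilde{V}(s,X_s,Y_s)},
\]
which is non-positive because $\tilde{V}\geq p$ forces $\tfrac{p-1}{\tilde{V}}\leq\tfrac{p-1}{p}\leq 1$ (the contribution of the $y$-coordinate to the quadratic variation vanishes by construction).

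Item~\eqref{item:1} of Theorem~\ref{thm:moments:hilbert.m} applied with $q_2=\infty$ (so $q_1=p$) then yields $\|\tilde{V}(\tau,X_\tau,Y_\tau)\|_{L^p(\P;\R)}\leq\|\tilde{V}(0,X_0,Y_0)\|_{L^p(\P;\R)}$, and item~\eqref{item:2} with $q_3=q$, $q_2=\infty$ yields the analogous uniform estimate with constant $\bigl(\tfrac{p}{q}\int_{(p-q)/q}^{\infty}\tfrac{s^{q/p}}{(s+1)^2}\,ds+1\bigr)^{p/q}$. Combined with the pointwise bounds $\tilde{V}(\tau,X_\tau,Y_\tau)\geq p+e^{-\alpha\tau}U(\tau,X_\tau)$ and $\sup_{t\in[0,\tau]}\tilde{V}(t,X_t,Y_t)\geq\sup_{t\in[0,\tau]}\bigl(p+e^{-\alpha t}U(t,X_t)\bigr)$, together with the identity $\tilde{V}(0,X_0,Y_0)=p+U(0,X_0)+\int_0^T\beta e^{-\alpha r}\,dr$, raising to the $p$th (respectively $q$th) power produces the first inequality in~\eqref{eq:moment.exp.marginal.condition} and in~\eqref{eq:moment.exp.uniform.condition}. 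The second inequality in~\eqref{eq:moment.exp.marginal.condition} is purely deterministic and follows from $1+A/p\leq e^{A/p}$ for $A\geq 0$, applied pointwise with $A=U(0,X_0)+\int_0^T\beta e^{-\alpha s}\,ds\geq 0$ and then raised to the $p$th power before taking expectation.

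The main obstacle is the Lyapunov verification for $\tilde{V}$, in particular ensuring that the cross term $\tfrac{p-1}{2}\|\nabla\tilde{V}\,\tilde{b}\|^2/\tilde{V}$ is absorbed by the residual $-\tfrac{1}{2}e^{-2\alpha s}\|(\tfrac{\partial}{\partial x}U)\sigma\|^2$ produced by the cancellation; this is precisely where the lower bound $\tilde{V}\geq p$ is used. The role of the compensator $Y$ is to place the term $\int_0^T\beta e^{-\alpha r}\,dr$ \emph{inside} the $L^p$-norm on the right-hand side of~\eqref{eq:moment.exp.marginal.condition}; applying Theorem~\ref{thm:moments:hilbert.m} directly to $p+e^{-\alpha s}U(s,x)$ without augmentation would produce this term outside the norm via Minkowski's inequality, yielding a strictly weaker bound.
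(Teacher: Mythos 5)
Your proof is correct and essentially the same as the paper's: your augmented Lyapunov function satisfies $\tilde V(t,X_t,Y_t)=p+e^{-\alpha t}U(t,X_t)+\int_t^T\beta e^{-\alpha r}\,dr$, which coincides pointwise with the paper's time-dependent choice $V(s,x)=p+e^{-\alpha s}U(s,x)+\beta\int_s^T e^{-\alpha u}\,du$, and the verification of~\eqref{eq:ass.powers2} with $\alpha\equiv0$, $\beta\equiv0$ via the lower bound $\tilde V\geq p$ is the same computation followed by the same application of Theorem~\ref{thm:moments:hilbert.m} and the bound $1+x\leq e^x$. The only cosmetic difference is that you introduce a deterministic auxiliary coordinate $Y$ (mirroring the proof of Corollary~\ref{c:exp.moments}), whereas the paper absorbs the deterministic compensator directly into the time-dependent $V$, which sidesteps both the state augmentation and the minor need to restrict the augmented open set so that $\tilde V$ stays nonnegative on it.
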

\begin{proof}[Proof of Corollary~\ref{c:exp.moments.condition}]
  Throughout this proof 
  let $p\in[1,\infty)$
  and
  let
  $V\colon[0,T]\times O\to[0,\infty)$
  satisfy
  for all $s\in[0,T]$, $x\in O$
  that
  $V(s,x)=p+U(s,x)e^{-\alpha s}+\beta\int_s^T e^{-\alpha u}\,du$.
  Note that \eqref{eq:condition.exp.moment.condition} implies
  that
  for all $s\in[0,T]$, $x\in O$
  it holds that
  \begin{equation}  \begin{split}\label{eq:ass.powers.condition}
    &(\tfrac{\partial}{\partial t}V)(s,x)
    +
    (\tfrac{\partial}{\partial x}V)(s,x)\,
    \mu(s,x)
    +\tfrac{1}{2}\textup{trace}\Big(
    (\sigma\cdot\sigma^*)(s,x)\,
    (\textup{Hess}_xV)(s,x)
    \Big)
    \\&\qquad
    +\tfrac{p-1}{2}\tfrac{\|
    (\frac{\partial}{\partial x}V)(s,x)\,\sigma(s,x)\|_{\HS(U,\R)}^2}
    {V(s,x)}
    \\&
    =
   (\tfrac{\partial}{\partial s}U)(s,x)e^{-\alpha s}
   -U(s,x)\alpha e^{-\alpha s}-\beta e^{-\alpha s}
    +
    (\tfrac{\partial}{\partial x}U)(s,x)\,
    \mu(s,x)e^{-\alpha s}
    \\&\qquad
    +\tfrac{1}{2}\textup{trace}\Big(
    (\sigma\cdot\sigma^*)(s,x)\,
    (\textup{Hess}_xU)(s,x)
    \Big)e^{-\alpha s}
    +
    \tfrac{(p-1)}{2}\tfrac{\big\|(\frac{\partial}{\partial x}U)(s,x)\sigma(s,x)e^{-\alpha s}\big\|_{\HS(U,\R)}^2}{p+U(s,x)e^{-\alpha s}+\beta\int_s^{T}e^{-\alpha u}du}
    \\&
    \leq
   e^{-\alpha s}\Big((\tfrac{\partial}{\partial s}U)(s,x)
   -U(s,x)\alpha -\beta
    +
    (\tfrac{\partial}{\partial x}U)(s,x)\,
    \mu(s,x)
    \\&\qquad
    +\tfrac{1}{2}\textup{trace}\Big(
    (\sigma\cdot\sigma^*)(s,x)\,
    (\textup{Hess}_xU)(s,x)
    \Big)
    +
    \tfrac{1}{2e^{\alpha s}}\big\|(\tfrac{\partial}{\partial x}U)(s,x)\sigma(s,x)\big\|_{\HS(U,\R)}^2
    \Big)
    \leq 0.
  \end{split}     \end{equation}
  This, Theorem~\ref{thm:moments:hilbert.m}
  (applied with $a_t=\mu(t,X_t)$, $b_t=\sigma(t,X_t)$,
  $\alpha_t=0$, $\beta_t=0$, $q_2=\infty$ for all $t\in[0,T]$
  in the notation of Theorem~\ref{thm:moments:hilbert.m}),
  and the fact that $\forall x\in[0,\infty)\colon 1+x\leq e^{x}$
  yield that
  \begin{equation}  \begin{split}
    &\E\Big[|p+e^{-\alpha \tau}U(\tau,X_{\tau})|^p\Big]
    \leq
    \E\Big[|V(\tau,X_{\tau})|^p\Big]
    \leq
    \E\Big[|V(0,X_{0})|^p\Big]
    \\&
    =
    \E\Big[|p+U(0,X_{0})+\smallint_0^T\tfrac{\beta}{e^{\alpha s}}\,ds|^p\Big]
    =
    p^p \E\Big[|1+\tfrac{1}{p}U(0,X_{0})+\tfrac{1}{p}\smallint_0^T\tfrac{\beta}{e^{\alpha s}}\,ds|^p\Big]
    \\&
    \leq
    p^p\E\Big[\Big|\exp\Big(\tfrac{1}{p}U(0,X_{0})+\tfrac{1}{p}\smallint_0^T\tfrac{\beta}{e^{\alpha s}}\,ds\Big)\Big|^p\Big]
    =
    p^p\E\Big[\exp(U(0,X_{0})+\smallint_0^T\tfrac{\beta}{e^{\alpha s}}\,ds)\Big]
  \end{split}     \end{equation}
  and yield for all $q\in(0,p)$ that
  \begin{equation}  \begin{split}
    &\left(\E\!\left[\sup_{t\in[0,\tau]}|p+e^{-\alpha t}U(t,X_{t})|^q\right]\right)^{\!\frac{1}{q}}
    \leq
    \left(\E\!\left[\sup_{t\in[0,\tau]}|V(t,X_{t})|^q\right]\right)^{\!\frac{1}{q}}
    \\&
    \leq
    \left(\E\Big[|V(0,X_{0})|^q\Big]\right)^{\!\frac{1}{q}}
    \left(\tfrac{p}{q}\smallint_{\frac{p-q}{q}}^{\infty}\tfrac{s^{\frac{q}{p}}}{(s+1)^2}\,ds+1\right)^{\!\frac{p}{q}}
    \\&=
    \left(\E\Big[|p+U(0,X_{0})+\smallint_0^T \tfrac{\beta}{e^{\alpha u}}\,du|^q\Big]\right)^{\frac{1}q}
    \Big(
    \tfrac{p}{q}\smallint_{\frac{p-q}{q}}^{\infty}\tfrac{s^{\frac{q}{p}}}{(s+1)^2}\,ds+1\Big)^{\frac{p}{q}}.
  \end{split}     \end{equation}
  This finishes the proof of Corollary~\ref{c:exp.moments.condition}.
\end{proof}

\subsection{Strong local Lipschitz continuity in the initial value}\label{sec:localLip}

In this subsection we derive strong local Lipschitz continuity in the initial value of solutions of SDEs.
We do not assume that
the coefficients of the SDE satisfy a global monotonicity condition
since this condition is not satisfied for most example SDEs from applications; cf., e.g.,
Cox et al.~\cite[Chapters 4,5]{CoxHutzenthalerJentzen2013}.
Establishing such a strong local Lipschitz continuity is nontrivial since there exist even SDEs with
globally bounded and smooth coefficients which do not have this property due to a loss of regularity phenomenon; see
Hairer et al.~\cite{HairerHutzenthalerJentzen2015}.
The marginal local Lipschitz estimate~\eqref{eq:l.Lipschitz.marginal} below
improves existing results
in~\cite{CoxHutzenthalerJentzen2013,
FangImkellerZhang2007,
Li1994,%
ScheutzowSchulze2017,%
Zhang2010%
}.
To the best of our knowledge,
the uniform local Lipschitz estimate~\eqref{eq:l.Lipschitz.uniform} below is new.

\begin{sett} \label{s:exists:C0}
  Assume Setting~\ref{sett:applications},
let $ Y \colon [0,T] \times \Omega \to \mathcal{O} $
be an adapted stochastic process
with continuous sample paths
which satisfies that $\P$-a.s.\ it holds that
  $
    \smallint_0^{ \tau } \| \mu( s, Y_s ) \|_H
    + \| \sigma( s, Y_s ) \|_{\HS(U,H)}^2
    \, ds < \infty
  $
and
which satisfies that for all $t\in [0,T]$ it holds $\P$-a.s.~that 
\begin{align} \label{eq:def:Y}
  Y_{\min\{t,\tau\}}& = 
  Y_0
  + \int_0^{ t } \1_{[0,\tau]}(r)\mu(r,Y_{r} ) \, dr
  +
  \int_0^{ t } \1_{[0,\tau]}(r)\sigma(r,Y_{r} ) \, dW_r,
\end{align}
let $ \alpha_0,\alpha_1, \beta_0,\beta_1\in [0,\infty)$,
$ 
  V_0 
$,
$
  V_1  \in C^{ 2 }( O , [0,\infty) ) 
$,
let
$ 
  \bar{V} \colon [0,T] \times \mathcal{O} \to [0,\infty) 
$
be a Borel measurable function which satisfies that
$\P$-a.s.~it holds that
$
	\int_0^{\tau} | \bar{V}(r, X_{r}) |+| \bar{V}(r, Y_{r}) | d r <\infty
$
and that
for all
$ i \in \{ 0, 1 \} $,
$t\in[0,T]$,
$x\in \mathcal{O}$
it holds that 
\begin{equation}
\label{eq:multiple_exp_est3}
\begin{split}
&  \langle \mu(t,x),(\nabla V_i)(x)\rangle_H+\tfrac{1}{2}\textup{trace}\Big(\sigma(t,x)[\sigma(t,x)]^{*}(\textup{Hess}V_i)(x)\Big)
\\&
  +
  \tfrac{ 
    1
  }{ 
    2 
    e^{ 
      \alpha_{ i } t 
    }
  }
    \|
      \sigma( t, x )^* ( \nabla V_{ i } )( x )
    \|_U^2
  +
  \mathbbm{1}_{
    \{ 1 \}
  }(i)
  \cdot
  \bar{V}(t,x)
\leq
  \alpha_{ i } V_{ i }(x)
  +
  \beta_{ i },
\end{split}
\end{equation}
let 
$
  \constFun \colon [0,T] \to [0,\infty]
$
be a Borel measurable function which satisfies that $\int_0^T \constFun(r) \, dr <\infty$,
let $p\in[2,\infty)$, $q,q_0,q_1\in(0,\infty)$ satisfy that $\tfrac{1}{q_0}+\tfrac{1}{q_1}=\tfrac{1}{q}$,
and assume that for all $t\in[0,T]$, $x,y\in \mathcal{O}$ it holds that
\begin{equation}  \begin{split}\label{eq:ass.41}
  &\big\langle x-y,\mu(t,x)-\mu(t,y)\big\rangle_H+\tfrac{1}{2}\big\|\sigma(t,x)-\sigma(t,y)\big\|^2_{\HS(U,H)}
  +\tfrac{p-2}{2}
    \tfrac{\big\|\big\langle x-y,\sigma(t,x)-\sigma(t,y)\big\rangle_H\big\|_{\HS(U,\R)}^2}{\|x-y\|_H^2}
  \\&
  \leq \|x-y\|^2_H\cdot\Big(\constFun(t)
  +\tfrac{V_0(x)+V_0(y)}{2q_0 T e^{\alpha_0t}}
  +\tfrac{\bar{V}(t,x)+\bar{V}(t,y)}{2q_1 e^{\alpha_1t}}
  \Big).
\end{split}     \end{equation}
\end{sett}

\begin{lemma}[Strong local Lipschitz continuity in the initial value] \label{l:localLipschitz.initial}
Assume Setting~\ref{s:exists:C0} and let $x,y\in \mathcal{O}$.
Then
\begin{enumerate}[(i)]
  \item\label{item:localLipschitz1}  
it holds for all $t\in(0,T]$ that
  \begin{equation}  \begin{split}\label{eq:l.Lipschitz.marginal}
    \Big\|X_{\min\{t,\tau\}}-Y_{\min\{t,\tau\}}\Big\|_{L^{\frac{pq}{p+q}}(\P;H)}
&\leq \Big\|X_0-Y_0\Big\|_{L^p(\P;H)}
  \exp\!\left(
        \int_0^t  
        \constFun(r)
        +
        \tfrac{
          \beta_{ 0 } \, ( 1 - \frac{ r }{ t } )
        }{
          q_{0} e^{ \alpha_{ 0 } r }
        }
        + 
        \tfrac{ \beta_{ 1 } }{ 	q_{ 1 } e^{ \alpha_{ 1 } r } }
        \,
        dr
      \right)
  \\&\qquad\cdot
  \prod_{i=0}^1
  \left(\E\left[\exp\left(V_i(X_0)\right)\right]\right)^{\!\nicefrac{1}{2q_i}}
  \prod_{i=0}^1
  \left(\E\left[\exp\left(V_i(Y_0)\right)\right]\right)^{\!\nicefrac{1}{2q_i}}
  \end{split}     \end{equation}
  and
  \item\label{item:localLipschitz2}  
it holds for all $\delta\in(0,1)$ that
  \begin{equation}  \begin{split}\label{eq:l.Lipschitz.uniform}
 &   \Big\|\sup_{t\in[0,\tau]}\|X_t-Y_t\|_H\Big\|_{L^{\frac{pq\delta}{p\delta+q}}(\P;\R)}
\leq
\Big\|X_0-Y_0\Big\|_{L^{p\delta}(\P;H)}
  \exp\!\left(
        \int_0^T  
        \constFun(r)
        +
        \tfrac{
          \beta_{ 0 } 
          \, ( 1 - \frac{ r }{ T } )
        }{
          q_{0} e^{ \alpha_{ 0 } r }
        }
        + 
        \tfrac{ \beta_{ 1 } }{ 	q_{ 1 } e^{ \alpha_{ 1 } r } }
        \,
        dr
      \right)
  \\&\qquad\qquad\cdot
     \left(\tfrac{1}{\delta}\smallint_{\frac{1-\delta}{\delta}}^{\infty}\tfrac{s^{\delta}}{(s+1)^2}\,ds+1\right)^{\!\frac{1}{2\delta }}
  \prod_{i=0}^1
  \Big(\E\left[\exp\left(V_i(X_0)\right)\right]\Big)^{\nicefrac{1}{2q_i}}
  \prod_{i=0}^1
  \Big(\E\left[\exp\left(V_i(Y_0)\right)\right]\Big)^{\nicefrac{1}{2q_i}}
  \! .
  \end{split}     \end{equation}
\end{enumerate}
\end{lemma}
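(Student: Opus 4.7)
The plan is to apply the stochastic Gronwall-Lyapunov inequality (Corollary~\ref{c:moments:hilbert.m}) to the difference process $D_t := X_t - Y_t$ and then to control the resulting exponential factor using H\"older's inequality together with Corollary~\ref{c:exp.moments} applied separately to the Lyapunov functions $V_0$ and $V_1$.

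First, note that on $[0,\tau]$ the difference $D$ is an It\^o process with drift $\mu(s,X_s)-\mu(s,Y_s)$ and diffusion $\sigma(s,X_s)-\sigma(s,Y_s)$, and that hypothesis~\eqref{eq:ass.41} is precisely the one-sided affine-linear growth condition~\eqref{eq:lin.growth.c} applied to $D$, with $\beta \equiv 0$ and random coefficient
\[
  \alpha_s^\star := \constFun(s) + \tfrac{V_0(X_s) + V_0(Y_s)}{2 q_0 T e^{\alpha_0 s}} + \tfrac{\bar V(s,X_s) + \bar V(s,Y_s)}{2 q_1 e^{\alpha_1 s}}.
\]
Applying Corollary~\ref{c:moments:hilbert.m}(\ref{item:lingrowth1}) with stopping time $t \wedge \tau$ and dual exponents $q_2 = q$, $q_1 = \tfrac{pq}{p+q}$ reduces item~\eqref{item:localLipschitz1} to estimating $\|\exp(\smallint_0^{t \wedge \tau}\alpha^\star_u\,du)\|_{L^q(\P;\R)}$, and applying Corollary~\ref{c:moments:hilbert.m}(\ref{item:lingrowth2}) with $q_3 = p\delta < p$ reduces item~\eqref{item:localLipschitz2} to the analogous estimate with $t \wedge \tau$ replaced by $\tau$, multiplied by the explicit constant appearing in~\eqref{eq:c:moments.hilbert.m.sup}.

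I would then bound the exponential moment by H\"older's inequality with $\tfrac{1}{q} = \tfrac{1}{q_0} + \tfrac{1}{q_1}$, splitting the $\constFun$-, $V_0$- and $\bar V$-contributions, followed by Cauchy-Schwarz inside each of the latter two to decouple the $X$- and $Y$-factors. The $\bar V$-contribution is controlled by Corollary~\ref{c:exp.moments}(\ref{item:expmom1}) applied with $U = V_1$, $\bar U = \bar V - \beta_1$, $\alpha = \alpha_1$: its hypothesis~\eqref{eq:condition.exp.moment} is exactly~\eqref{eq:multiple_exp_est3} for $i=1$, and after dropping the non-negative boundary term $\exp(V_1(X_{t\wedge\tau})/e^{\alpha_1(t\wedge\tau)})$ one obtains $\E[\exp(\smallint_0^{t \wedge \tau}\bar V(s,X_s)/e^{\alpha_1 s}\,ds)] \leq \exp(\smallint_0^t \beta_1/e^{\alpha_1 s}\,ds)\cdot \E[\exp V_1(X_0)]$, and analogously for $Y$. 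For the $V_0$-contribution I would introduce the time-dependent auxiliary Lyapunov function $\tilde U(s,x) := \tfrac{t-s}{t}V_0(x)$ together with $\tilde{\bar U}(s,x) := V_0(x)/t - (t-s)\beta_0/t$ and parameter $\tilde\alpha := \alpha_0$ on the time horizon $[0,t]$, and verify hypothesis~\eqref{eq:condition.exp.moment} by direct computation. The key algebraic cancellation is that the quadratic gradient term produces the factor $\tfrac{(t-s)^2}{t^2} - \tfrac{t-s}{t} = -\tfrac{s(t-s)}{t^2} \leq 0$, which exactly absorbs the diffusion contribution from~\eqref{eq:multiple_exp_est3} for $i=0$. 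Dropping the non-negative boundary term and using $1/T \leq 1/t$ then yields $\E[\exp(\smallint_0^{t \wedge \tau} V_0(X_s)/(T e^{\alpha_0 s})\,ds)] \leq \exp(\smallint_0^t (1 - s/t)\beta_0/e^{\alpha_0 s}\,ds) \cdot \E[\exp V_0(X_0)]$, and similarly for $Y$.

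Assembling these three estimates with the prescribed H\"older weights $q_0^{-1}$ and $q_1^{-1}$ and combining with $\|D_0\|_{L^p}$ (respectively $\|D_0\|_{L^{p\delta}}$) yields~\eqref{eq:l.Lipschitz.marginal} and~\eqref{eq:l.Lipschitz.uniform}, the latter obtained by repeating the argument with $t$ replaced by $T$ and $\tilde U(s,x) = \tfrac{T-s}{T} V_0(x)$. The main technical obstacle is the choice of auxiliary Lyapunov function $\tilde U$ used to handle the $V_0$-factor: a direct Jensen's-inequality argument applied to $\exp(\smallint V_0(X_s)/(Te^{\alpha_0 s})\,ds)$ points in the wrong direction and only yields a bound with $\beta_0$ multiplied by $1$ instead of $(1 - r/t)$, so the sharper weight can only be obtained through the time-linear choice $\tilde U(s,x) = \tfrac{t-s}{t} V_0(x)$, which is engineered precisely to produce the negative cross-term that absorbs the extra diffusion contribution.
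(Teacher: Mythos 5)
Your proof is correct. The opening reduction -- applying Corollary~\ref{c:moments:hilbert.m} to the difference process $X-Y$ with the random coefficient $\alpha^\star$ and $\beta\equiv 0$, then controlling the resulting $L^q$-norm of $\exp(\int_0^{\tau\wedge t}\alpha^\star_u\,du)$ by H\"older with $\tfrac1q=\tfrac1{q_0}+\tfrac1{q_1}$ followed by Cauchy--Schwarz to decouple the $X$- and $Y$-factors -- coincides with the paper, as does your treatment of the $\bar V$-factor via Corollary~\ref{c:exp.moments}\eqref{item:expmom1} with $U=V_1$, $\bar U=\bar V-\beta_1$, $\alpha=\alpha_1$ and discarding the nonnegative boundary term.

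Where you genuinely diverge from the paper is the $V_0$-factor. The paper inserts the running $\beta_0$-correction $-\int_0^r\beta_0/(q_0 t e^{\alpha_0 u})\,du$ into the exponent (at the cost of the factor $\exp(\int_0^t\beta_0(1-r/t)/(q_0 e^{\alpha_0 r})\,dr)$ outside), then applies Jensen's inequality with respect to the normalized Lebesgue measure on $[0,t]$ together with Tonelli's theorem to reduce to the marginal exponential moment estimate at each fixed time $r$ with the \emph{constant-in-time} test function $U=V_0$ and $\bar U=-\beta_0$. You instead bypass the Jensen/Tonelli step entirely by engineering the \emph{time-dependent} Lyapunov function $\tilde U(s,x)=\tfrac{t-s}{t}V_0(x)$ with $\tilde{\bar U}(s,x)=V_0(x)/t-(t-s)\beta_0/t$, verify condition~\eqref{eq:condition.exp.moment} by a direct computation in which the sign $\tfrac{(t-s)^2}{t^2}-\tfrac{t-s}{t}=-\tfrac{s(t-s)}{t^2}\leq 0$ arising from $\partial_s\tilde U$ and the gradient-squared term absorbs the extra diffusion contribution, and then invoke Corollary~\ref{c:exp.moments}\eqref{item:expmom1} a single time, dropping the nonnegative boundary term. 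Both routes give the same constant; yours trades a Jensen/Tonelli averaging step for a bespoke test function.

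One aside in your write-up is inaccurate: you assert that a Jensen-based argument "only yields a bound with $\beta_0$ multiplied by $1$ instead of $(1-r/t)$" and that the time-linear $\tilde U$ is the only way to obtain the sharper weight. In fact the paper does use Jensen and does recover the $(1-r/t)$ weight, precisely by moving the $\beta_0$-correction inside the exponent first (equivalently by taking $\bar U=-\beta_0$ rather than $\bar U=0$ in the marginal estimate). So your alternative is clean but not forced. This mischaracterization of the other route does not affect the correctness of your own argument.
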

\begin{proof}[Proof of Lemma~\ref{l:localLipschitz.initial}]
  It follows from~\eqref{eq:appl.X} and \eqref{eq:def:Y}
  that for all $t\in[0,T]$ it holds $\P$-a.s.\ that
  \begin{equation}  \begin{split}\label{eq:ItoXZ}
    &X_{\min\{t,\tau\}}- Y_{\min\{t,\tau\}}
    \\&=x-y+\int_0^t\1_{[0,\tau]}(r)\left( \mu(r,X_r)-\mu(r,Y_r)\right)\,dr
    +\int_0^t \1_{[0,\tau]}(r)\left(\sigma(r,X_r)-\sigma(r,Y_r)\right)\,dW_r.
  \end{split}     \end{equation}
  Assumption~\eqref{eq:ass.41} implies for all $t\in[0,T]$ that
  \begin{equation}  \begin{split}\label{eq:XZ.OSALG}
    &\langle X_t-Y_t,\mu(t,X_t)-\mu(t,Y_t)\rangle_H
    +\tfrac{1}{2}\Big\|\sigma(t,X_t)-\sigma(t,Y_t)\Big\|_{HS(U,H)}^2
    \\&\quad
    +\tfrac{p-2}{2}
     \tfrac{\|\langle X_t-Y_t,\sigma(t,X_t)-\sigma(t,Y_t)\rangle_{H}\|_{\HS(U,\R)}^2}
           {\| X_t-Y_t\|_H^2}
    \\&
    \leq \Big\|X_t-Y_t\Big\|_H^2\cdot
    \left(\constFun(t)+\tfrac{V_0(X_t)+V_0(Y_t)}{2q_0Te^{\alpha_0t}}
    +\tfrac{\bar{V}(t,X_t)+\bar{V}(t,Y_t)}{2q_1e^{\alpha_1t}}
    \right).
  \end{split}     \end{equation}
  This, \eqref{eq:ItoXZ}, and item (i) in Corollary~\ref{c:moments:hilbert.m}
  (applied for every $s\in(0,T]$ with 
  $T=s$, $\tau=\min\{s,\tau\}$, $X_t=X_t-Y_t$,
  $a_t=\mu(t,X_t)-\mu(t,Y_t)$,
  $b_t=\sigma(t,X_t)-\sigma(t,Y_t)$,
  $\alpha_t=
    \constFun(t)+\tfrac{V_0(X_t)+V_0(Y_t)}{2q_0Te^{\alpha_0t}}
    +\tfrac{\bar{V}(t,X_t)+\bar{V}(t,Y_t)}{2q_1e^{\alpha_1t}}$,
  $\beta_t=0$,
  $q_1=\tfrac{pq}{p+q}$,
  $q_2=q$
  for all $t\in[0,s]$
  in the notation of Corollary~\ref{c:moments:hilbert.m})
  imply for all $t\in(0,T]$ that
  \begin{equation}  \begin{split}\label{eq:local.Lipschitz.proof}
    &\Big\|X_{\min\{t,\tau\}}-Y_{\min\{t,\tau\}}\Big\|_{L^{\frac{pq}{p+q}}(\P;H)}
\\&
\leq \Big\|X_0-Y_0\Big\|_{L^p(\P;H)}
    \left\|
      \exp\!\left(
 \int_{0}^{{\min\{t,\tau\}}}
  \constFun(r)
  +
    \tfrac{ 
      V_{ 0 }( X_r ) 
      +
      V_{ 0 }( Y_r ) 
    }{ 2 q_{ 0 } T e^{ \alpha_{ 0} r } } 
    +
    \tfrac{ 
     \bar{V}( r, X_r ) 
     +
     \bar{V}( r, Y_r ) 
    }{ 
      2 q_{ 1} 
      e^{ \alpha_{ 1} r }
    }
  \,dr
  \right)
    \right\|_{
      L^q( \P; \R )
    }
\\&
\leq \Big\|X_0-Y_0\Big\|_{L^p(\P;H)}
    \left\|
      \exp\!\left(
 \int_{0}^{{\min\{t,\tau\}}}
  \constFun(r)
  +
    \tfrac{ 
      V_{ 0 }( X_r ) 
      +
      V_{ 0 }( Y_r ) 
    }{ 2 q_{ 0 } t e^{ \alpha_{ 0} r } } 
    +
    \tfrac{ 
     \bar{V}( r, X_r ) 
     +
     \bar{V}( r, Y_r ) 
    }{ 
      2 q_{ 1} 
      e^{ \alpha_{ 1} r }
    }
  \,dr
  \right)
    \right\|_{
      L^q( \P; \R )
    }.
  \end{split}     \end{equation}
H{\"o}lder's inequality together with
$\tfrac{ 1 }{ q } =2\tfrac{ 1 }{ 2q_{ 0} }+2\tfrac{ 1 }{ 2q_{ 1} }$,
the fact that $\beta_0,\beta_1\geq0$,
the fact that
$\forall t\in(0,T]\colon
\int_0^{t} \tfrac{\beta_0(1-\frac{r}{t})}{q_0 e^{\alpha_0 r }} \, dr
=
\int_0^{t} \int_0^r \tfrac{\beta_0}{q_0te^{\alpha_0 u}} \, du \, dr
\geq 
\int_0^{\min\{t,\tau\}} \int_0^r \tfrac{\beta_0}{q_0te^{\alpha_0 u}} \, du \, dr
$,
Jensen's inequality,
Tonelli's theorem,
nonnegativity of $V_1$,
\eqref{eq:multiple_exp_est3},
and
Corollary \ref{c:exp.moments}
(applied for every $r\in(0,T]$ with 
$\tau = \min\{r,\tau\}$,
$U(s,x)=V_0(x)$,
$\bar{U}(s,x)= -\beta_0$,
$\alpha=\alpha_0$,
$X = X$ (resp.\ $X=Y$) 
for all $s\in (0,T]$, $x\in\mathcal{O}$
and applied for every $t\in(0,T]$ with
$\tau=\min\{t,\tau\}$,
$U(s,x)=V_1(x)$,
$\bar{U}(s,x)=\bar{V}(s,x)-\beta_1$,
$\alpha=\alpha_1$,
$X = X$ (resp.\ $X=Y$)
for all $s\in(0,T]$, $x\in\mathcal{O}$
in the notation of
Corollary \ref{c:exp.moments.condition})
show for all $t\in(0,T]$ that
\begin{equation}\label{eq:estimate.exp}
\begin{split}
&
    \left\|
      \exp\!\left(
 \int_0^{\min\{t,\tau\}}
    \tfrac{ 
      V_{ 0 }( X_r ) 
      +
      V_{ 0 }( Y_r ) 
    }{ 2 q_{ 0 } t e^{ \alpha_{ 0} r } } 
    +
    \tfrac{ 
     \bar{V}(r, X_r ) 
     +
     \bar{V}(r, Y_r ) 
    }{ 
      2 q_{ 1} 
      e^{ \alpha_{ 1} r }
    }
  \,dr
  \right)
    \right\|_{
      L^q( \P; \R )
    }
  \exp\left(
    -\int_0^{t}\sum_{i=0}^1\tfrac{\beta_i(1-\frac{r}{t})^{1-i}}{q_ie^{\alpha_ir}}\,dr\right)
    \\
    &
\leq
    \left\|
      \exp\!\left(
 \int_0^{{\min\{t,\tau\}}}
    \tfrac{ 
      V_{ 0 }( X_r ) 
    }{  2q_0 t e^{ \alpha_{ 0} r } } 
    -\int_0^r\tfrac{\beta_0}{2 q_0 t e^{\alpha_0 u}}\,du
    \,dr
    \right)
    \right\|_{
      L^{2q_0}( \P; \R )
    }
 \left\|
   \exp\!\left(
   \int_0^{{\min\{t,\tau\}}} \!\!
    \tfrac{ 
     \bar{V}(r, X_r )-\beta_1 
    }{ 
      2 q_1 e^{ \alpha_{ 1} r }
    }
  \,dr
  \right)
    \right\|_{
      L^{2q_1}( \P; \R )
    }
 \\&\quad\cdot
    \left\|
      \exp\!\left(
 \int_0^{{\min\{t,\tau\}}}
    \tfrac{ 
      V_{ 0 }( Y_r ) 
    }{  2q_0 t e^{ \alpha_{ 0} r } } 
    -\int_0^r\tfrac{\beta_0}{2 q_0 t e^{\alpha_0 u}}\,du
    \,dr
    \right)
    \right\|_{
      L^{2q_0}( \P; \R )
    }
 \left\|
   \exp\!\left(
   \int_0^{{\min\{t,\tau\}}} \!\!
    \tfrac{ 
     \bar{V}(r, Y_r )-\beta_1 
    }{ 
      2 q_1 e^{ \alpha_{ 1} r }
    }
  \,dr
  \right)
    \right\|_{
      L^{2q_1}( \P; \R )
    }
    \\
    &
\leq
\left(
\tfrac{1}{t}\int_0^t
    \E\!\left[
      \exp\!\left(
    \tfrac{ 
      V_{ 0 }( X_{\min\{r,\tau\}} ) 
    }{   e^{ \alpha_{ 0} {\min\{r,\tau\}} } } 
    -\int_0^{\min\{r,\tau\}} \tfrac{\beta_0}{e^{\alpha_0 u}}\,du
    \right)
    \right]
    \,dr
    \right)^{\!\!\nicefrac{1}{2q_0}}
 \\ & \quad \cdot 
   \left(\E\!\left[
   \exp\!\left(
    \tfrac{ 
       V_{ 1 }( X_{\min\{t,\tau\}} ) 
    }{ e^{ \alpha_{ 1} {\min\{t,\tau\}} } } 
    +
   \int_0^{{\min\{t,\tau\}}}
    \tfrac{ 
     \bar{V}(r, X_r ) -\beta_1
    }{ 
      e^{ \alpha_{ 1} r } 
    }
  \,dr
  \right)
  \right]\right)^{\!\!\nicefrac{1}{2q_1}}
    \\
    &
\quad\cdot
\left(
\tfrac{1}{t}\int_0^t
    \E\!\left[
      \exp\!\left(
    \tfrac{ 
      V_{ 0 }( Y_{\min\{r,\tau\}} ) 
    }{   e^{ \alpha_{ 0} {\min\{r,\tau\}} } } 
    -\int_0^{\min\{r,\tau\}} \tfrac{\beta_0}{e^{\alpha_0 u}}\,du
    \right)
    \right]
    \right)^{\!\!\nicefrac{1}{2q_0}}
 \\ & \quad \cdot 
   \left(\E\!\left[
   \exp\!\left(
    \tfrac{ 
       V_{ 1 }( Y_{\min\{t,\tau\}} ) 
    }{ e^{ \alpha_{ 1} {\min\{t,\tau\}} } } 
    +
   \int_0^{{\min\{t,\tau\}}}
    \tfrac{ 
     \bar{V}(r, Y_r ) -\beta_1
    }{ 
      e^{ \alpha_{ 1} r } 
    }
  \,dr
  \right)
  \right]\right)^{\!\!\nicefrac{1}{2q_1}}
\\ &
\leq
  \prod_{i=0}^1
  \left(\E\left[\exp\left(V_i(X_0)\right)\right]\right)^{\!\nicefrac{1}{2q_i}}
  \prod_{i=0}^1
  \left(\E\left[\exp\left(V_i(Y_0)\right)\right]\right)^{\!\nicefrac{1}{2q_i}}
  \!\! .
\end{split}
\end{equation}
This and inequality~\eqref{eq:local.Lipschitz.proof}
yield for all $t\in(0,T]$ that
  \begin{equation}  \begin{split}
    \Big\|X_{\min\{t,\tau\}}-Y_{\min\{t,\tau\}}\Big\|_{L^{\frac{pq}{p+q}}(\P;H)}
&\leq \Big\|X_0-Y_0\Big\|_{L^p(\P;H)}
  \exp\!\left(
        \int_0^t  
        \constFun(r)
        +
        \tfrac{
          \beta_{ 0 }
          \, ( 1 - \frac{ r }{ t } )
        }{
          q_{0} e^{ \alpha_{ 0 } r }
        }
        + 
        \tfrac{ \beta_{ 1 } }{ 	q_{ 1 } e^{ \alpha_{ 1 } r } }
        \,
        dr
      \right)
  \\&\qquad\cdot
  \prod_{i=0}^1
  \left(\E\left[\exp\left(V_i(X_0)\right)\right]\right)^{\!\nicefrac{1}{2q_i}}
  \prod_{i=0}^1
  \left(\E\left[\exp\left(V_i(Y_0)\right)\right]\right)^{\!\nicefrac{1}{2q_i}}
  \! .
  \end{split}     \end{equation}
  This proves item~\eqref{item:localLipschitz1}.
  Next, \eqref{eq:ItoXZ}, \eqref{eq:XZ.OSALG},
  item (ii) in Corollary~\ref{c:moments:hilbert.m}
  (applied for every $\delta\in(0,1)$ with 
  $\tau=\tau$,
  $X_t=X_t-Y_t$,
  $a_t=\mu(t,X_t)-\mu(t,Y_t)$,
  $b_t=\sigma(t,X_t)-\sigma(t,Y_t)$,
  $\alpha_t=
    \constFun(t)+\tfrac{V_0(X_t)+V_0(Y_t)}{2q_0Te^{\alpha_0t}}
    +\tfrac{\bar{V}(t,X_t)+\bar{V}(t,Y_t)}{2q_1e^{\alpha_1t}}$,
  $\beta_t=0$,
  $q_1=\tfrac{pq\delta}{p\delta+q}$,
  $q_2=q$,
  $q_3=\delta p$
  for all $t\in[0,T]$
  in the notation of Corollary~\ref{c:moments:hilbert.m}),
  and \eqref{eq:estimate.exp}
  imply for all $\delta\in(0,1)$ that
  \begin{equation}  \begin{split}
 &   \Big\|\sup_{t\in[0,\tau]}\|X_t-Y_t\|_H\Big\|_{L^{\frac{pq\delta}{p\delta+q}}(\P;\R)}
\leq
  \left(\tfrac{1}{\delta}\smallint_{\frac{1-\delta}{\delta}}^{\infty}\tfrac{s^{\delta}}{(s+1)^2}\,ds+1\right)^{\!\frac{1}{2\delta }}
\Big\|X_0-Y_0\Big\|_{L^{p\delta}(\P;H)}
\\&\qquad\qquad\cdot
    \left\|
      \exp\!\left(
 \int_{0}^{T}
  \constFun(r)
  +
    \tfrac{ 
      V_{ 0 }( X_r ) 
      +
      V_{ 0 }( Y_r ) 
    }{ 2 q_{ 0 } T e^{ \alpha_{ 0} r } } 
    +
    \tfrac{ 
     \bar{V}( r, X_r ) 
     +
     \bar{V}( r, Y_r ) 
    }{ 
      2 q_{ 1} 
      e^{ \alpha_{ 1} r }
    }
  \,dr
  \right)
    \right\|_{
      L^q( \P; \R )
    }
\\&
\leq
     \left(\tfrac{1}{\delta}\smallint_{\frac{1-\delta}{\delta}}^{\infty}\tfrac{s^{\delta}}{(s+1)^2}\,ds+1\right)^{\!\frac{1}{2\delta }}
\Big\|X_0-Y_0\Big\|_{L^{p\delta}(\P;H)}
  \exp\!\left(
        \int_0^T  
        \constFun(r)
        +
        \tfrac{
          \beta_{ 0 } \, ( 1 - \frac{ r }{ T } )
        }{
          q_{0} e^{ \alpha_{ 0 } r }
        }
        + 
        \tfrac{ \beta_{ 1 } }{ 	q_{ 1 } e^{ \alpha_{ 1 } r } }
        \,
        dr
      \right)
  \\&\qquad\qquad\cdot
  \prod_{i=0}^1
  \left(\E\left[\exp\left(V_i(X_0)\right)\right]\right)^{\!\nicefrac{1}{2q_i}}
  \prod_{i=0}^1
  \left(\E\left[\exp\left(V_i(Y_0)\right)\right]\right)^{\!\nicefrac{1}{2q_i}}
  \! .
  \end{split}     \end{equation}
  This proves item~\eqref{item:localLipschitz2} and completes the proof of Lemma~\ref{l:localLipschitz.initial}.
\end{proof}

The following lemma, Lemma~\ref{l:temp.reg}, is essentially well-known
and is included for the convenience of the reader.

\begin{lemma}[Temporal regularity] \label{l:temp.reg}
Assume Setting~\ref{s:exists:C0},
let
$\gamma\in\big[\frac{1}{p},\infty\big)$, $c\in[0,\infty)$
satisfy
for all $t\in[0,T]$,
$ x\in \mathcal{O}$
that
\begin{align}\label{eq:growth.coeff}
\begin{split}
 \max\left\{ \|\mu(t,x)\|_H , \|\sigma(t,x)\|_{\HS(U,H)} \right\}
 \leq  c(1+V_0(x))^{\gamma},
\end{split}
\end{align}
and let 
$s\in[0,T]$.
%
%
Then it holds that
\begin{align} \label{lem:moments:X:eq2}
\begin{split}
& \left\| \sup_{t\in[\min\{s,\tau\},\tau]}\|X_{t} -X_{\min\{s,\tau\}}\|_H \right\|_{L^{p}(\P;\R)} 
\\&
  \leq c
    e^{\alpha_0\gamma T}\left\|p\gamma+V_0(X_0)+\int_0^T\tfrac{\beta_0}{e^{\alpha_0 u}}\,du\right\|_{L^{p\gamma}(\P;\R)}^{\gamma}
    \big(\sqrt{T}+p\big)\sqrt{|T-s|}.
\end{split}
\end{align} 
\end{lemma}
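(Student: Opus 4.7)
\bigskip

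\noindent \textbf{Proof sketch for Lemma~\ref{l:temp.reg}.} The plan is to split $X_{t\wedge\tau}-X_{s\wedge\tau}$ into its drift and diffusion parts, bound each in $L^p$ using Minkowski's integral inequality and the Burkholder--Davis--Gundy inequality respectively, and then use Corollary~\ref{c:exp.moments.condition} (applied to the Lyapunov function $V_0$) to convert pointwise $L^{p\gamma}$-bounds on $V_0(X_{r\wedge\tau})$ into the single data-side factor appearing in~\eqref{lem:moments:X:eq2}.

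First I would note that equation~\eqref{eq:appl.X} implies for all $r\in[s,T]$ that $\P$-a.s.
\begin{equation*}
X_{r\wedge\tau}-X_{s\wedge\tau}
=\int_{s\wedge\tau}^{r\wedge\tau}\mu(u,X_u)\,du+\int_{s\wedge\tau}^{r\wedge\tau}\sigma(u,X_u)\,dW_u.
\end{equation*}
The triangle inequality in $L^p(\P;\R)$ then reduces the task to bounding the $L^p$-norms of
$\sup_{r\in[s,T]}\|\int_{s\wedge\tau}^{r\wedge\tau}\mu(u,X_u)\,du\|_H$ and
$\sup_{r\in[s,T]}\|\int_{s\wedge\tau}^{r\wedge\tau}\sigma(u,X_u)\,dW_u\|_H$ separately.

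For the drift term I would pull the supremum inside and apply Minkowski's integral inequality together with the growth bound~\eqref{eq:growth.coeff}, yielding
\begin{equation*}
\Big\|\sup_{r\in[s,T]}\Big\|\smallint_{s\wedge\tau}^{r\wedge\tau}\mu(u,X_u)\,du\Big\|_H\Big\|_{L^p(\P;\R)}
\leq c\int_{s}^{T}\big\|1+V_0(X_{u\wedge\tau})\big\|_{L^{p\gamma}(\P;\R)}^{\gamma}\,du.
\end{equation*}
For the diffusion term I would invoke a version of the Burkholder--Davis--Gundy inequality (with a constant $\lesssim p$ valid for $p\in[2,\infty)$), followed by Minkowski's inequality in $L^{p/2}$, to obtain
\begin{equation*}
\Big\|\sup_{r\in[s,T]}\Big\|\smallint_{s\wedge\tau}^{r\wedge\tau}\sigma(u,X_u)\,dW_u\Big\|_H\Big\|_{L^p(\P;\R)}
\leq p\,c\,\Big(\!\int_{s}^{T}\big\|1+V_0(X_{u\wedge\tau})\big\|_{L^{p\gamma}(\P;\R)}^{2\gamma}\,du\Big)^{\!\!\tfrac{1}{2}}.
\end{equation*}

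The crucial uniform estimate is next: for every $u\in[s,T]$ I would apply Corollary~\ref{c:exp.moments.condition}~(i) with the stopping time $u\wedge\tau$ in place of $\tau$, with $U(t,x)=V_0(x)$, $\alpha=\alpha_0$, $\beta=\beta_0$, and with $p$ there replaced by $p\gamma\geq 1$; this is legitimate because the $i=0$ case of~\eqref{eq:multiple_exp_est3} is precisely condition~\eqref{eq:condition.exp.moment.condition} for $V_0$. Combining the resulting bound with the pointwise estimate $1+V_0(X_{u\wedge\tau})\leq e^{\alpha_0 T}\bigl(p\gamma+e^{-\alpha_0(u\wedge\tau)}V_0(X_{u\wedge\tau})\bigr)$ gives
\begin{equation*}
\sup_{u\in[0,T]}\big\|1+V_0(X_{u\wedge\tau})\big\|_{L^{p\gamma}(\P;\R)}
\leq e^{\alpha_0 T}\,\Big\|p\gamma+V_0(X_0)+\smallint_0^T\tfrac{\beta_0}{e^{\alpha_0 v}}\,dv\Big\|_{L^{p\gamma}(\P;\R)}.
\end{equation*}
Inserting this uniform bound into the drift estimate produces the factor $c(T-s)e^{\alpha_0\gamma T}\|\cdots\|_{L^{p\gamma}}^{\gamma}$ and into the diffusion estimate produces $cp\sqrt{T-s}\,e^{\alpha_0\gamma T}\|\cdots\|_{L^{p\gamma}}^{\gamma}$. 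Using $T-s\leq\sqrt{T}\sqrt{T-s}$ and summing yields exactly the right-hand side of~\eqref{lem:moments:X:eq2}.

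The main obstacle is really only bookkeeping: identifying the right constant in the BDG inequality (so that $p$, rather than $\sqrt{p}$ or some universal constant, appears in the $(\sqrt{T}+p)$ factor), and keeping the $e^{\alpha_0\gamma T}$ prefactor from the exponential moment step outside the temporal integration so that it multiplies the final $\sqrt{T-s}$ cleanly rather than being absorbed into a Gronwall-type iteration.
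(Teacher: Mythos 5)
Your proposal is correct and follows essentially the same route as the paper's proof: split $X_{t\wedge\tau}-X_{s\wedge\tau}$ into drift and diffusion by the triangle inequality, bound the drift via Minkowski's integral inequality and the diffusion via a Burkholder--Davis--Gundy estimate, insert the growth bound~\eqref{eq:growth.coeff}, control the resulting $L^{p\gamma}$-moments of $1+V_0(X_{u\wedge\tau})$ uniformly in $u$ by invoking Corollary~\ref{c:exp.moments.condition}~(i) with $\alpha=\alpha_0$, $\beta=\beta_0$, $U=V_0$, and finish with $T-s\leq\sqrt{T}\sqrt{T-s}$. The only cosmetic difference is that the paper cites the Da Prato--Zabczyk BDG constant $\sqrt{p^3/(2(p-1))}$ explicitly and only bounds it by $p$ at the end, whereas you insert $p$ directly; both are valid since $p^3/(2(p-1))\le p^2$ for $p\ge 2$.
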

\begin{proof}[Proof of Lemma~\ref{l:temp.reg}]
Equation \eqref{eq:appl.X},
the triangle inequality,
the Burkholder-Davis-Gundy type inequality in Da Prato \& Zabczyk \cite[Lemma 7.2 and Lemma 7.7]{dz92}, 
\eqref{eq:growth.coeff},
the fact that $p\gamma\geq 1$,
the fact that $\beta_0\geq 0$,
and
Corollary~\ref{c:exp.moments.condition}
(applied for every $r\in[s,T]$ with $U(t,x)=V_0(x)$,
$\alpha=\alpha_0$,
$\beta=\beta_0$
for all $t\in[s,T]$, $x\in O$
in the notation of 
Corollary~\ref{c:exp.moments.condition})
ensure that
\begin{align}
\begin{split}
& \left\| \sup_{t\in[\min\{s,\tau\},\tau]}\|X_{t} -X_{s}\|_H \right\|_{L^{p}(\P;\R)} 
\\&
	\leq
   \left\| \sup_{t\in[s,T]}\left\|\smallint_{s}^{t}\1_{[0,\tau]}(r) 	\mu(r,X_{r} ) \, dr\right\|_H \right\|_{L^{p}(\P;\R)} 
	+  \left\|\sup_{t\in[s,T]}\left\|\smallint_{s}^{t}\1_{[0,\tau]}(r) \sigma(r,X_{r} ) \, dW_r\right\|_H \right\|_{L^{p}(\P;\R)} 
\\&
	\leq
    \int_{s}^{T} \left\|\1_{[0,\tau]}(r)	\mu(r,X_{r} ) \right\|_{L^{p}(\P;H)}  \, dr
	+  \left( \tfrac{p^3}{2(p-1)}\smallint_{s}^{T}\Big\|\1_{[0,\tau]}(r) \sigma(r,X_{r} ) \Big\|_{L^{p}(\P;\HS(U,H))}^2\,dr\right)^{\!\frac{1}{2}}
\\&
	\leq
    c\smallint_{s}^{T}\left(\E\Big[\big(1+V_0(X_{\min\{r,\tau\}} )\big)^{p\gamma}\Big] \right)^{\!\frac{1}{p}}  \, dr
	+ c \left(\tfrac{p^3}{2(p-1)} \smallint_{s}^{T}
  \left(\E\Big[\big(1+V_0(X_{\min\{r,\tau\}} )\big)^{p\gamma}\Big] \right)^{\!\frac{2}{p}}\,dr\right)^{\!\frac{1}{2}}
\\&
	\leq
    c\sup_{r\in[s,T]}\left(e^{\alpha_0\gamma pT}
    \E\Big[\big(p\gamma+e^{-\alpha_0 \min\{r,\gamma\}}V_0(X_{\min\{r,\tau\}} )\big)^{p\gamma}\Big] \right)^{\!\frac{1}{p}}
    \left(T-s+\sqrt{(T-s) \tfrac{p^3}{2(p-1)}} \right)
\\&
	\leq
   c \Big(e^{\alpha_0 p\gamma T}\E\Big[|p\gamma+V_0(X_0)+\int_0^T\tfrac{\beta_0}{e^{\alpha_0u}}\,du|^{p\gamma}\Big]
    \Big)^{\frac{1}{p}}
    \Big(T-s+\sqrt{(T-s)\tfrac{p^3}{2(p-1)}}\Big)
 \\&
  \leq
   c e^{\alpha_0\gamma T}\Big\|p\gamma+V_0(X_0)+\int_0^T\tfrac{\beta_0}{e^{\alpha_0u}}\,du\Big\|_{L^{p\gamma}(\P;\R)}^{\gamma}
    \big(\sqrt{T}+p\big)\sqrt{|T-s|}.
\end{split}
\end{align}
The proof of Lemma~\ref{l:temp.reg} is thus completed.
\end{proof}

\begin{lemma}[Strong local H\"older estimate] \label{lem:Hoelder}
Assume Setting~\ref{s:exists:C0},
let
$\gamma\in[\tfrac{1}{p},\infty)$, $c\in[0,\infty)$
satisfy
for all $t\in[0,T]$,
$ x\in \mathcal{O}$
that
\begin{align}
\begin{split}
 \max\left\{ \|\mu(t,x)\|_H , \|\sigma(t,x)\|_{\HS(U,H)} \right\}
 \leq  c(1+V_0(x))^{\gamma},
\end{split}
\end{align}
assume that $\tfrac{pq}{p+q}\in[2,\infty)\cap[\tfrac{1}{\gamma},\infty)$
and
let
$t_1,t_2\in [0,T]$, $x_1,x_2\in \mathcal{O}$.
Then it holds that 
\begin{equation}  \begin{split}
  \|X_{t_1}-Y_{t_2}\|_{L^{\frac{pq}{p+q}}(\P;H)}
&
\leq
    \sqrt{|t_1-t_2|}c e^{\alpha_0\gamma T}
    \left\|\tfrac{pq}{p+q}\gamma+V_0(X_0)+\int_0^T\tfrac{\beta_0}{e^{\alpha_0 s}}\,ds\right\|_{L^{p\gamma}(\P;\R)}^{\gamma}
    \big(\sqrt{T}+\tfrac{pq}{p+q}\big)
\\&\qquad
+
 \Big\|X_0-Y_0\Big\|_{L^p(\P;H)}
  \exp\!\left(
        \int_0^{T}
        \constFun(r)
        +
        \tfrac{
          \beta_{ 0 }
        }{
          q_{0} e^{ \alpha_{ 0 } r }
        }
        + 
        \tfrac{ \beta_{ 1 } }{ 	q_{ 1 } e^{ \alpha_{ 1 } r } }
        \,
        dr
      \right)
  \\&\qquad\cdot
  \prod_{i=0}^1
  \left(\E\left[\exp\left(V_i(X_0)\right)\right]\right)^{\!\nicefrac{1}{2q_i}}
  \prod_{i=0}^1
  \left(\E\left[\exp\left(V_i(Y_0)\right)\right]\right)^{\!\nicefrac{1}{2q_i}}
  \! .
\end{split}     \end{equation}
\end{lemma}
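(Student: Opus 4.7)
The strategy is to combine the local Lipschitz estimate of Lemma~\ref{l:localLipschitz.initial} with the temporal regularity of Lemma~\ref{l:temp.reg} via the triangle inequality. After relabelling I may assume $t_1\leq t_2$ (and if $t_2=0$ the claim reduces to $\|X_0-Y_0\|_{L^{pq/(p+q)}(\P;H)}\leq\|X_0-Y_0\|_{L^p(\P;H)}$, which is monotonicity of $L^r$-norms on a probability space; interpret $X_{t_i},Y_{t_i}$ as $X_{\min\{t_i,\tau\}},Y_{\min\{t_i,\tau\}}$ which is the only case the hypotheses constrain). Decompose
\begin{equation*}
\|X_{t_1}-Y_{t_2}\|_{L^{\frac{pq}{p+q}}(\P;H)} \leq \|X_{t_1}-X_{t_2}\|_{L^{\frac{pq}{p+q}}(\P;H)} + \|X_{t_2}-Y_{t_2}\|_{L^{\frac{pq}{p+q}}(\P;H)}.
\end{equation*}
The first summand will produce the $\sqrt{|t_1-t_2|}$ temporal factor; the second reproduces the spatial Lipschitz part.

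For the spatial (second) summand I apply Lemma~\ref{l:localLipschitz.initial}~(i) at time $t_2\in(0,T]$. The exponential factor it yields integrates up to $t_2$ and carries the decay $(1-r/t_2)$ in front of $\beta_0$; since $\beta_0,\beta_1\geq 0$, $1-r/t_2\in[0,1]$ for $r\in[0,t_2]$, and $t_2\leq T$, this exponential is dominated by the exponential with integration up to $T$ and with $(1-r/T)$ replaced by $1$, i.e.\ exactly the factor $\exp(\smallint_0^T\phi(r)+\tfrac{\beta_0}{q_0e^{\alpha_0r}}+\tfrac{\beta_1}{q_1e^{\alpha_1r}}\,dr)$ appearing in the claim. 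The product of $L^{2q_i}$-moments of $\exp(V_i(X_0))$ and $\exp(V_i(Y_0))$ carries through unchanged.

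For the temporal (first) summand I invoke a slight adaptation of Lemma~\ref{l:temp.reg} with the exponent $p$ there replaced by $\tfrac{pq}{p+q}$. This substitution is legitimate precisely because the hypothesis $\tfrac{pq}{p+q}\in[2,\infty)\cap[\tfrac{1}{\gamma},\infty)$ guarantees simultaneously that $\tfrac{pq}{p+q}\geq 2$ (so the Burkholder--Davis--Gundy constant of Da Prato--Zabczyk at exponent $\tfrac{pq}{p+q}$ is finite) and that $\tfrac{pq}{p+q}\gamma\geq 1$ (so that Corollary~\ref{c:exp.moments.condition} applies to bound the exponential $V_0$-moment of $X$ in the needed $L^{\frac{pq}{p+q}\gamma}$-norm). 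The only modification to the proof of Lemma~\ref{l:temp.reg} is to restrict the Bochner and stochastic integrals arising in the Burkholder--Davis--Gundy step to $[\min\{t_1,\tau\},\min\{t_2,\tau\}]$ rather than extending them to $\tau$, which tightens the output factor from $\sqrt{|T-s|}$ to $\sqrt{|t_2-t_1|}$. The prefactor $(\sqrt{T}+\tfrac{pq}{p+q})$ then arises from the bound $(t_2-t_1)+\sqrt{(t_2-t_1)\cdot\tfrac{[pq/(p+q)]^3}{2[pq/(p+q)-1]}}\leq(\sqrt{T}+\tfrac{pq}{p+q})\sqrt{|t_1-t_2|}$ (using $t_2-t_1\leq T$), and the replacement of the sharper $L^{\frac{pq}{p+q}\gamma}$-norm by the stated $L^{p\gamma}$-norm is by monotonicity of $L^r$-norms on a probability space.

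The main obstacle, such as there is one, is merely bookkeeping: tracking how the constants in Lemma~\ref{l:temp.reg} transform when the outer exponent is switched from $p$ to $\tfrac{pq}{p+q}$, and re-checking that the exponential moment bound from Corollary~\ref{c:exp.moments.condition} still closes. The one conceptual observation is that the hypothesis $\tfrac{pq}{p+q}\in[2,\infty)\cap[\tfrac{1}{\gamma},\infty)$ is calibrated precisely to enable this exponent substitution; no further ingredient is needed.
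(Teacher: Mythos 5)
Your proof is correct and follows essentially the same route as the paper: a triangle-inequality split of $X_{t_1}-Y_{t_2}$ into a temporal increment $X_{t_1}-X_{t_2}$ and a spatial increment $X_{t_2}-Y_{t_2}$, handling the first via Lemma~\ref{l:temp.reg} and the second via Lemma~\ref{l:localLipschitz.initial}~(i), then absorbing the $(1-r/t_2)$ weight and the smaller integration range and $L$-exponent into the cruder stated bound. The one place you work harder than necessary is the temporal summand: you propose to ``slightly adapt'' the proof of Lemma~\ref{l:temp.reg} by restricting the Bochner/stochastic integrals to $[\min\{t_1,\tau\},\min\{t_2,\tau\}]$, whereas the paper simply reparametrizes the lemma as stated --- taking $T=\max\{t_1,t_2\}$, $\tau=\max\{t_1,t_2\}$, $s=\min\{t_1,t_2\}$, and $p=\tfrac{pq}{p+q}$ --- so that the output factor $\sqrt{|T-s|}$ already equals $\sqrt{|t_1-t_2|}$ with no re-derivation. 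Your identification of why $\tfrac{pq}{p+q}\in[2,\infty)\cap[\tfrac{1}{\gamma},\infty)$ is needed (to clear the Da~Prato--Zabczyk BDG threshold and to satisfy the $\gamma\geq 1/p$ hypothesis of Lemma~\ref{l:temp.reg} after the exponent swap) is exactly right, as is the use of $L^r$-monotonicity to pass to the $L^{p\gamma}$-norm; your separate treatment of $t_1=t_2=0$ corresponds to the paper's ``WLOG $t_1+t_2>0$''.
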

\begin{proof}[Proof of Lemma~\ref{lem:Hoelder}]
Without loss of generality we assume that $t_1+t_2>0$.
The triangle inequality,
Lemma~\ref{l:temp.reg} 
(applied with $T=\max\{t_1,t_2\}$,
$\tau=\max\{t_1,t_2\}$,
$p=\tfrac{pq}{p+q}$,
$s=\min\{t_1,t_2\}$
in the notation of 
Lemma~\ref{l:temp.reg}),
and
Lemma~\ref{l:localLipschitz.initial}
(applied with $\tau=T$, $x=x_1$, $y=x_2$, $t=t_2$
in the notation of
Lemma~\ref{l:localLipschitz.initial})
yield that
\begin{equation}  \begin{split}
  &\|X_{t_1}-Y_{t_2}\|_{L^{\frac{pq}{p+q}}(\P;H)}
  \leq
  \|X_{t_1}-X_{t_2}\|_{L^{\frac{pq}{p+q}}(\P;H)}
  +
  \|X_{t_2}-Y_{t_2}\|_{L^{\frac{pq}{p+q}}(\P;H)}
\\&
\leq
    \sqrt{|t_1-t_2|}c e^{\alpha_0\gamma T}
    \left\|\tfrac{pq}{p+q}\gamma+V_0(X_0)+\int_0^T\tfrac{\beta_0}{e^{\alpha_0 s}}\,ds\right\|_{L^{p\gamma}(\P;\R)}^{\gamma}
    \big(\sqrt{T}+\tfrac{pq}{p+q}\big)
\\&\qquad
+
 \Big\|X_0-Y_0\Big\|_{L^p(\P;H)}
  \exp\!\left(
        \int_0^{T}
        \constFun(r)
        +
        \tfrac{
          \beta_{ 0 }
        }{
          q_{0} e^{ \alpha_{ 0 } r }
        }
        + 
        \tfrac{ \beta_{ 1 } }{ 	q_{ 1 } e^{ \alpha_{ 1 } r } }
        \,
        dr
      \right)
  \\&\qquad\cdot
  \prod_{i=0}^1
  \left(\E\left[\exp\left(V_i(X_0)\right)\right]\right)^{\!\nicefrac{1}{2q_i}}
  \prod_{i=0}^1
  \left(\E\left[\exp\left(V_i(Y_0)\right)\right]\right)^{\!\nicefrac{1}{2q_i}}
  \! .
\end{split}     \end{equation}
This completes the proof of Lemma~\ref{lem:Hoelder}.
\end{proof}

\subsection{Strong completeness}\label{sec:completeness}
In this subsection we derive conditions on the coefficients
of an SDE which ensure strong completeness of the SDE.
For this we first derive
a version of the Kolmogorov-Chentsov theorem.
More precisely,
the following proposition, Proposition~\ref{p:KolChen},
provides a method which allows
to obtain a continuous version
of a mapping $X\colon D\times\Omega\to F$
from a subset $D$ of a finite-dimensional Hilbert space
to a closed subset of a Banach space
if there exist $p\in(\dim(H),\infty)$ and $\alpha\in(\tfrac{\dim(H)}{p},1]$
such that the mapping $D\ni x\mapsto X(x)\in L^p(\P;F)$
is locally bounded and locally $\alpha$-H\"older continuous.
In the case where $F=E$, $H=\R^d$, and~\eqref{eq:locallyHoelder} below holds for $n=\infty$, the proof of Proposition~\ref{p:KolChen}
is provided in Theorem 2.1 in Mittmann \& Steinwart~\cite{MS03}.
Proposition~\ref{p:KolChen} slightly generalizes
Cox et al.~\cite[Theorem 3.5]{CoxHutzenthalerJentzen2013}
and Grohs et al.~\cite[Lemma 2.19]{GrohsHornungJentzenWurstemberger2018}.
\begin{prop}[Existence of a continuous version]\label{p:KolChen}
Let
$(\Omega, \mathcal{F}, \P)$ be a probability space,  
let $( H, \left< \cdot , \cdot \right>_H, \left\| \cdot \right\|_H )$
be a finite-dimensional $\R$-Hilbert space,
let $ D  \subseteq H$ be a set,
let $ ( E, \left\| \cdot \right\|_E )$ be a Banach space,
let $ F \subseteq E $ be a closed subset,
let
$ p \in (\dim(H),\infty) $,  
$ 
  \alpha \in ( \frac{ \dim(H) }{ p } , \infty) 
$,
and let 
$ 
  X \colon D \times\Omega\to F
$
be a random field which satisfies for all $n\in\N$
that 
\begin{equation}  \begin{split}\label{eq:locallyHoelder}
  \sup\Big(&\Big\{\E\left[\|X(x)\|_E^p\right]\colon
  x\in D, \|x\|_H\leq n
  \Big\}
  \\&
  \cup
  \Big\{\tfrac{\left(\E\left[\|X(x)-X(y)\|_E^p\right]\right)^{\!\frac{1}{p}}}
  {\|x-y\|_H^{\alpha }}
  \colon x,y\in D, \|x\|_H\leq n, \|y\|_H\leq n,x\neq y \Big\}
  \cup\{0\}
  \Big)
  <\infty.
\end{split}     \end{equation}
Then there exists 
a function 
$
  \mathcal{X} \colon \overline{D} \times \Omega \to F
$
which satisfies
\begin{enumerate}[(i)]
  \item that 
      $\mathcal{X}$ is
      $\mathcal{B}(\overline{D}) \otimes \mathcal{F}/ \mathcal{B}(F)$-measurable,
  \item 
that for all $\omega\in\Omega$ it holds that
$(\overline{D}\ni x\mapsto \mathcal{X}(x,\omega)\in F) \in C(\overline{D}, F)$,
\item for all $n\in\N$, $\beta\in(0,\alpha-\tfrac{\dim(H)}{p})$
  that
  $\mathcal{X}|_{\{x\in\overline{D}\colon \|x\|_H\leq n\}}
  \in \mathcal{L}^p(\P; C_b^{\beta}(\{x\in\overline{D}\colon \|x\|_H\leq n\},F))$,
\item that for all $x\in D$ it holds $\P$-a.s.~that
$
  \mathcal{X}(x) = X(x)
$.
\end{enumerate}
\end{prop}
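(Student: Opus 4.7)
The plan is to reduce Proposition~\ref{p:KolChen} to a standard Banach-valued Kolmogorov-Chentsov theorem (as provided, e.g., in Theorem 2.1 of Mittmann \& Steinwart~\cite{MS03} or Theorem 3.5 of Cox et al.~\cite{CoxHutzenthalerJentzen2013}) applied on an increasing bounded exhaustion of $D$, and then to patch the resulting local continuous modifications into a globally defined process which remains in the closed subset $F \subseteq E$.

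First, since $H$ is a finite-dimensional $\R$-Hilbert space, it is linearly isomorphic to $\R^{\dim(H)}$ equipped with the Euclidean norm. For each $n \in \N$, set $D_n := \{x \in D : \|x\|_H \leq n\}$, a bounded subset of (a copy of) $\R^{\dim(H)}$, and fix an exponent $\beta \in (0, \alpha - \dim(H)/p)$. Assumption~\eqref{eq:locallyHoelder} states precisely that $D_n \ni x \mapsto X(x) \in L^p(\P; E)$ is bounded and $\alpha$-H\"older continuous. Applying the cited Kolmogorov-Chentsov theorem to $X|_{D_n}$, I would obtain a jointly measurable random field $\mathcal{X}^{(n)} : \overline{D_n} \times \Omega \to E$ which is continuous in $x$ for every $\omega$, modifies $X$ on $D_n$, and lies in $\mathcal{L}^p(\P; C_b^{\beta}(\overline{D_n}, E))$.

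Second, I would paste these local modifications together. By uniqueness of continuous modifications, for $m>n$ there is a $\P$-null set $N_{n,m}$ outside of which $\mathcal{X}^{(n)}(\cdot, \omega) = \mathcal{X}^{(m)}(\cdot, \omega)|_{\overline{D_n}}$. Modifying each $\mathcal{X}^{(n)}$ on the countable exceptional set $\bigcup_{n,m} N_{n,m}$ (e.g., setting it equal to an arbitrary fixed element of $F$ there) produces a coherent family yielding a single map $\mathcal{X}: \overline{D} \times \Omega \to E$ with the required continuity, measurability, and Hölder integrability on every bounded piece $\overline{D_n}$. The $F$-valuedness then follows by selecting a countable dense subset $\{x_k\}_{k\in\N} \subseteq D$ of $\overline{D}$ (available since finite-dimensional $H$ is second countable) and observing that outside a single $\P$-null set $N$ one has $\mathcal{X}(x_k, \omega) = X(x_k, \omega) \in F$ for every $k$; continuity of $\mathcal{X}(\cdot, \omega)$ combined with closedness of $F$ then forces $\mathcal{X}(\overline{D}, \omega) \subseteq F$ for all $\omega \notin N$. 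Redefining $\mathcal{X}$ to be a constant element of $F$ on $N$ preserves continuity, measurability, and the Hölder bound.

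Third, joint $\mathcal{B}(\overline{D}) \otimes \mathcal{F}/\mathcal{B}(F)$-measurability of the patched field follows automatically for a random field which is spatially continuous on the separable metric space $\overline{D}$: one approximates $\mathcal{X}$ by the jointly measurable piecewise-constant fields $\sum_k \1_{A_k^{(j)}}(x)\, \mathcal{X}(x_k, \omega)$ associated to countable Borel partitions $\{A_k^{(j)}\}_k$ of $\overline{D}$ with shrinking mesh, and passes to the pointwise limit. The main obstacle is essentially bookkeeping: juggling the countably many null sets, patching the local modifications into a truly jointly measurable object, and verifying that the continuous extension preserves membership in the closed subset $F$. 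Since $\overline{D}$ is covered by the countable family $\{\overline{D_n}\}_{n\in\N}$ and the cited Kolmogorov-Chentsov theorem already supplies the crucial analytic input --- the $L^p$-bound on H\"older seminorms over each ball --- no further substantial estimates are required to complete the proof.
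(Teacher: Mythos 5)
Your proposal is correct and follows essentially the same route as the paper: exhaust $D$ by the bounded pieces $D_n=\{x\in D:\|x\|_H\leq n\}$, apply the Kolmogorov--Chentsov theorem (Theorem~3.5 of Cox et al.) on each $\overline{D_n}$, and then patch the resulting local modifications together using a countable dense subset of $D$ and spatial continuity, handling the exceptional null set and joint measurability exactly as you do. The only minor difference is that the paper invokes a version of Kolmogorov--Chentsov which already produces $F$-valued local modifications, so the extra density-plus-closedness argument you include to restore $F$-valuedness is not needed in the paper's variant (and, incidentally, is somewhat cleaner than the paper's formula $\mathcal{X}=\mathbbm{1}_{\Omega_0}\limsup_n\mathcal{X}^n$, which literally returns $0$ on $\Omega_0^c$ and therefore tacitly requires $0\in F$).
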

\begin{proof}[Proof of Proposition~\ref{p:KolChen}]
Without loss of generality we assume that $D\neq \emptyset$
(otherwise the assertion is trivial)
and that $H\neq \{0\}$
(if $H=\{0\}$, then $D=\{0\}$ and $X$ itself satisfies (i)--(iii)).
Throughout this proof
for every $n\in\N$ let $D_n \subseteq D$
be the set which satisfies that $D_n=\{x\in D\colon\|x\|_H\leq n\}$,
let $d\in\N$ satisfy that $d=\dim(H)$,
let $\{h_1, \ldots, h_d \}\subseteq H$ be an orthonormal basis of $H$, and let $\mathcal{D} \subseteq D$ be the set
$\mathcal{D}=\{x\in D \colon (\langle x, h_i\rangle_H)_{i=1}^d \in \mathbb{Q}^d\}$. 
By assumption it holds for all $n\in\N$ that $X|_{D_n}\in C_b^{\alpha}(D_n,\mathcal{L}^p(\P;F))$.
Then Theorem~3.5 in Cox et al.~\cite{CoxHutzenthalerJentzen2013}
shows that for all $n\in \N$
there exists
$
  \mathcal{X}^n \in  
  \bigcap_{ 
    \beta \in ( 0, \alpha - \frac{ d }{ p } )
  }
  \,
  \mathcal{L}^p(
    \P; 
    C^{ \beta }_b(\overline{D_n}, F )
  )
$
such that 
for every $ x \in D_n $ 
it holds $\P$-a.s.~that
$
 \mathcal{X}^n(x) = X(x)
$.
Let $\Omega_0\subseteq\Omega$ be the set satisfying that
$\Omega_0
=\cap_{n\in\N} \cap_{x\in D_n\cap \mathcal{D}} \cap_{m\in\N\cap[n,\infty)} \{\mathcal{X}^m(x)=X(x)\}$.
Then this, the fact that $X$, $(\mathcal{X}^n)_{n\in\N}$ are random fields, and the fact that
$\N\times\mathcal{D}\times\N$ is a countable set imply that $\Omega_0\in\mathcal{F}$
and
that $\P(\Omega_0)=1$.
Continuity yields that for all $\omega\in\Omega_0$, $n,m\in\N$ with $m\geq n$ it holds that $\mathcal{X}^m(\omega)|_{\overline{D_n}}=\mathcal{X}^n(\omega)$.
Note that $\overline{D}=\cup_{n=1}^{\infty}\overline{D_n}$.
Now let $\mathcal{X}\colon\overline{D}\times\Omega\to F$ be the function satisfying for all $x\in \overline{D}$, $\omega\in\Omega$
that $\mathcal{X}(x,\omega)=\mathbbm{1}_{\Omega_0}(\omega)\limsup_{n\to\infty}\mathcal{X}^n(x,\omega)$.
Then it holds for all $n\in\N$ that $\mathcal{X}|_{\overline{D_n}\times\Omega}=\mathbbm{1}_{\Omega_0}\mathcal{X}^n\in
  \bigcap_{ 
    \beta \in ( 0, \alpha - \frac{ d }{ p } )
  }
  \,
  \mathcal{L}^p(
    \P; 
    C^{ \beta }_b(\overline{D_n}, F )
  )
$
and that for all $x\in D$ it holds $\P$-a.s.\ that $\mathcal{X}(x)=X(x)$.
The fact that 
$\overline{D}=\cup_{n=1}^{\infty}\overline{D_n}$
finally yields for all $\omega\in\Omega$ that
$\mathcal{X}(\omega)\in C(\overline{D},F)$.
Path continuity also implies that $\mathcal{X}$
is $\mathcal{B}(\overline{D})\otimes\mathcal{F}$/$\mathcal{B}(F)$-measurable.
The proof of Proposition~\ref{p:KolChen} is thus completed.
\end{proof}

We emphasize that strong completeness may fail to hold even in the
case of smooth and globally bounded coefficients;
see Li \& Scheutzow~\cite{LiScheutzow2011}.
The following theorem, Theorem~\ref{thm:strong.completeness}
essentially
generalizes
the results in
\cite{CoxHutzenthalerJentzen2013,FangImkellerZhang2007,Li1994,SchenkHoppe1996Deterministic,
ScheutzowSchulze2017,Zhang2010}.

\begin{theorem}[Strong completeness] \label{thm:strong.completeness}
Assume Setting~\ref{s:exists:C0},
assume that $\tau=T$ and  that $V_0,V_1$ are bounded on every bounded subset of $\mathcal{O}$,
let
$\gamma\in(0,\infty)$, $c\in[0,\infty)$
assume that
for all 
$t\in[0,T]$, $ x\in \mathcal{O}$
it holds that
\begin{align} \label{ass:1:moments:X}
\begin{split}
 \max\left\{ \|\mu(t,x)\|_H , \|\sigma(t,x)\|_{\HS(U,H)} \right\}
 \leq  c(1+V_0(x))^{\gamma},
\end{split}
\end{align}
assume that
$\dim(H)<\infty$,
let $ X^x \colon [0,T] \times \Omega \to \mathcal{O} $, $x\in\mathcal{O}$,
be adapted stochastic processes
with continuous sample paths
satisfying that for all $t\in [0,T]$, $x\in\mathcal{O}$
it holds $\P$-a.s.~that 
\begin{align} \label{eq:def:X2}
  X^x_{t}& = 
  x
  + \int_0^{ t } \mu(r,X^x_{r} ) \, dr
  +
  \int_0^{ t } \sigma(r,X^x_{r} ) \, dW_r,
\end{align}
and
assume that $\tfrac{pq}{p+q}\in(\dim(H),\infty)\cap[\tfrac{1}{\gamma},\infty)$.
Then there exists a 
function
$
  \mathcal{X} \colon [0,T] \times \overline{\mathcal{O}} \times \Omega \to \overline{\mathcal{O}}
$
such that
\begin{enumerate}[(i)]
\item $\mathcal{X}$ is
$\mathcal{B}([0,T] \times \overline{\mathcal{O}}) \otimes \mathcal{F} / \mathcal{B}(\overline{\mathcal{O}})$-measurable,
\item it holds for every $ \omega \in \Omega $ that
$
  ([0,T]\times\overline{\mathcal{O}}\ni(t,x)\mapsto
  \mathcal{X}_t^{x}(\omega)\in\overline{\mathcal{O}})
  \in C( [0,T] \times \overline{\mathcal{O}}, \overline{\mathcal{O}}),
$
and
  \item  
for all 
$
  x \in \mathcal{O} 
$ 
it holds $\P$-a.s.~that 
$
  (\mathcal{X}^x_{t })_{t \in [0,T]} 
  = 
  (X^x_{t})_{t \in [0,T]}
$.
\end{enumerate}
\end{theorem}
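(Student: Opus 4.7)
The plan is to view the family $\{X^x\}_{x\in\mathcal{O}}$ as a single random field $\mathbf{X}\colon\mathcal{O}\times\Omega\to F$ with values in the closed subset $F:=C([0,T],\overline{\mathcal{O}})$ of the Banach space $E:=C([0,T],H)$, where $\mathbf{X}(x,\omega)\in F$ is the sample path $[0,T]\ni t\mapsto X^x_t(\omega)$. I would then apply Proposition~\ref{p:KolChen} with $D=\mathcal{O}$, the finite-dimensional Hilbert space $H$, the Banach space $E$, the closed subset $F$, and H\"older exponent $\alpha=1$ to obtain a jointly measurable, pathwise continuous version $\boldsymbol{\mathcal{X}}\colon\overline{\mathcal{O}}\times\Omega\to F$, and finally set $\mathcal{X}^x_t(\omega):=[\boldsymbol{\mathcal{X}}(x,\omega)](t)$. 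Joint $(t,x,\omega)$-measurability and joint continuity in $(t,x)$ of the resulting $\mathcal{X}$ follow at once from the joint continuity of the evaluation map $[0,T]\times F\ni(t,f)\mapsto f(t)\in\overline{\mathcal{O}}$, and property~(iii) of the theorem is inherited from item~(iv) of Proposition~\ref{p:KolChen}.

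The crucial input is the uniform-in-time Lipschitz estimate provided by Lemma~\ref{l:localLipschitz.initial}\eqref{item:localLipschitz2} applied with the second process $Y$ also taken to be a solution of the SDE from a deterministic starting point $y\in\mathcal{O}$. For each $\delta\in(0,1)$ that lemma yields
\begin{equation*}
\Bigl\|\sup_{t\in[0,T]}\|X^x_t-X^y_t\|_H\Bigr\|_{L^{pq\delta/(p\delta+q)}(\P;\R)}
\leq C_\delta(x,y)\,\|x-y\|_H,
\end{equation*}
where $C_\delta(x,y)$ is the product of a deterministic factor with $\prod_{i=0}^1\exp(V_i(x)/(2q_i))\exp(V_i(y)/(2q_i))$; by the hypothesis that $V_0,V_1$ are bounded on bounded subsets of $\mathcal{O}$, this constant remains bounded on $\{(x,y)\in\mathcal{O}\times\mathcal{O}:\|x\|_H,\|y\|_H\le n\}$ for every $n\in\N$. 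Since $\delta\mapsto pq\delta/(p\delta+q)$ is continuous and increases to $pq/(p+q)$ as $\delta\uparrow1$, and since $pq/(p+q)>\dim(H)$ by hypothesis, I would fix $\delta<1$ close enough to $1$ so that $r:=pq\delta/(p\delta+q)>\dim(H)$; the hypothesis of Proposition~\ref{p:KolChen} is then satisfied since $\alpha p = 1\cdot r > \dim(H)$. The companion local $L^r$-boundedness $\sup_{\|x\|_H\le n}\E\|\mathbf{X}(x)\|_E^r<\infty$ is supplied by Lemma~\ref{l:temp.reg} with $s=0$ combined with the triangle inequality, using $r\le p$ and $L^p(\P;\R)\hookrightarrow L^r(\P;\R)$.

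The main obstacle I foresee is the bookkeeping in invoking Lemma~\ref{l:localLipschitz.initial}: one must verify that after specializing to deterministic initial data $X_0=x$ and $Y_0=y$, the parameters $\alpha_0,\alpha_1,\beta_0,\beta_1,q_0,q_1$ appearing in Setting~\ref{s:exists:C0} can be carried through consistently and that the resulting constant genuinely depends locally boundedly on $(x,y)$ through the exponentials of $V_0$ and $V_1$. Once this is checked, everything else---closedness of $F$ in $E$, the fact that each sample path $X^x_\cdot(\omega)$ takes values in $\overline{\mathcal{O}}$, and the extraction of $\mathcal{X}$ from $\boldsymbol{\mathcal{X}}$ via continuous evaluation---is routine and requires no further input beyond Proposition~\ref{p:KolChen}.
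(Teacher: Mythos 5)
Your proposal is correct and follows essentially the same route as the paper: both apply Proposition~\ref{p:KolChen} with $E=C([0,T],H)$, $F=C([0,T],\overline{\mathcal{O}})$, $\alpha=1$, and exponent $\tfrac{pq\delta}{p\delta+q}$ with $\delta\in(0,1)$ chosen so that this exponent exceeds $\dim(H)$, feeding in Lemma~\ref{l:temp.reg} (with $s=0$) for the local $L^r$-bound and item~(ii) of Lemma~\ref{l:localLipschitz.initial} together with boundedness of $V_0,V_1$ on bounded sets for the local Lipschitz estimate. The ``main obstacle'' you flag is indeed exactly the bookkeeping the paper carries out, and it goes through as you expect.
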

\begin{proof}[Proof of Theorem~\ref{thm:strong.completeness}]
Throughout this proof let $\delta\in(0,1)$ satisfy
$\tfrac{pq\delta}{p\delta+q}\in(\dim(H),\infty)$ and
let $D_n\subseteq H$, $n\in \N$,
be the sets which satisfy for all $n\in \N$
that $D_n=\{v\in \mathcal{O} \colon \|v\|_H \leq n\}$.
The triangle inequality,
Lemma~\ref{l:temp.reg} (applied 
for every $x\in\mathcal{O}$, $n\in\N$
with 
$s=0$
in the notation of
Lemma~\ref{l:temp.reg}),
and boundedness of $V_0$ on the bounded subsets $D_n$, $n\in\N$, of $\mathcal{O}$
show for all $n\in \N$ with $D_n\neq \emptyset$ that
\begin{align} \label{thm:strong.completeness:X}
\begin{split}
&
\sup_{x\in D_n} 
\Big\|\sup_{t\in[0,T]} \| X_{t}^{x} \|_H \Big\|_{L^{\frac{pq\delta }{p\delta +q}}(\P;\R)}
\leq
\sup_{x\in D_n} 
\Big\| \sup_{t\in[0,T]} \| X_{t}^{x}-x \|_H \Big\|_{L^{\frac{pq}{p+q}}(\P;\R)}
+
\sup_{x\in D_n} \|x\|_H
\\&\leq
    \sqrt{T}c e^{\alpha_0\gamma T}
    \Big|\tfrac{pq}{p+q}\gamma+\sup_{y\in D_n}V_0(y)\Big|^{\gamma}
    \big(\sqrt{T}+\tfrac{pq}{p+q}\big)\sqrt{T}
    +n<\infty.
\end{split}
\end{align} 
Moreover, item (ii) in Lemma~\ref{l:localLipschitz.initial}
and boundedness of $V_0$ on the bounded subsets $D_n$, $n\in\N$, of $\mathcal{O}$
imply for all $n\in \N$ with $\# D_n\in[2,\infty]$ that
\begin{equation}  \begin{split}
&
  \sup_{ \substack{ x_1,x_2 \in D_n\\
  x_1\neq x_2}}
  \frac{ 
  \left\|\sup_{t\in[0,T]}\big\| X_{t}^{x_1} - X_{t}^{x_2}\big\|_H\right\|_{L^{\frac{pq\delta}{p\delta+q}}(\P;\R)}
  }
  {\|x_1-x_2\|_H}
  \leq
     \left(\tfrac{1}{\delta}\smallint_{\frac{1-\delta}{\delta}}^{\infty}\tfrac{s^{\delta}}{(s+1)^2}\,ds+1\right)^{\!\frac{1}{2\delta }}
\\&\qquad
  \cdot\exp\!\left(
        \int_0^{T}  
        \constFun(r)
        +
        \tfrac{
          \beta_{ 0 } 
        }{
          q_{0} e^{ \alpha_{ 0 } r }
        }
        + 
        \tfrac{ \beta_{ 1 } }{ 	q_{ 1 } e^{ \alpha_{ 1 } r } }
        \,
        dr
        +
        \sum_{ i = 0 }^1 
        \tfrac{\sup_{y\in D_n}V_i(y)+\sup_{y\in D_n}V_i(y)}{2 q_{i} } 
      \right)
  <\infty.
\end{split}     \end{equation}
Proposition~\ref{p:KolChen}
(applied with 
$H = H$,
$D=\mathcal{O}$,
$E=C([0,T],H)$,
$F=C([0,T],\overline{\mathcal{O}})$,
$p=\tfrac{pq\delta}{p\delta+q}$,
$\alpha=1$
in the notation of Proposition~\ref{p:KolChen})
finally yields the assertion.
This
completes the proof of Theorem~\ref{thm:strong.completeness}.
\end{proof}

\subsection{Perturbation estimates for SDEs}\label{sec:perturbation}
Many problems can be formulated as perturbations of SDEs, e.g.:
time discretizations of SDEs, spatial discretizations of stochastic partial differential equations (SPDEs), or
small noise approximations of ordinary differential equations.
We follow here the principal perturbation
approach of Hutzenthaler \& Jentzen~\cite{HutzenthalerJentzen2014}.
The following corollary, Corollary~\ref{c:perturbation}, 
applies Theorem~\ref{thm:moments:hilbert.m}
to derive a suitable perturbation estimate.
The marginal perturbation estimate~\eqref{eq:perturbation.marginal} below
is a minor improvement of Hutzenthaler \& Jentzen~\cite[Theorem 1.2]{HutzenthalerJentzen2014}.
To the best of our knowledge, the uniform perturbation
estimate~\eqref{eq:perturbation.uniform} is new.

\begin{cor}[Perturbation estimate for SDEs]\label{c:perturbation}
  Assume Setting~\ref{sett:applications},
  let $a\colon[0,T]\times\Omega\to H$ and $b\colon[0,T]\times\Omega\to\HS(U,H)$
  be $\mathcal{B}([0,T])\otimes\mathcal{F}$-measurable and adapted stochastic processes
  which satisfy $\P$-a.s.\ that $\int_0^{\tau}\|a_s\|_H+\|b_s\|^2_{\HS(U,H)}\,ds<\infty$
  and 
  which satisfy that for all $t\in[0,T]$ it holds $\P$-a.s.\ that
  \begin{equation}  \begin{split}\label{eq:appl.Y}
    Y_{\min\{t,\tau\}}=Y_0+\int_0^t\1_{[0,\tau]}(s)a_s\,ds+\int_0^t\1_{[0,\tau]}(s)b_s\,dW_s,
  \end{split}     \end{equation}
  let $p\in[2,\infty)$, $\eps\in[0,\infty]$, $\delta\in(0,\infty]$,
  and
  assume that $\P$-a.s.\ it holds that
   $\int_0^{\tau}\max\{\langle X_s-Y_s,\mu(s,X_s)-\mu(s,Y_s)\rangle_H+\tfrac{1+\eps}{2}\|\sigma(s,X_s)-\sigma(s,Y_s)\|_{\HS(U,H)}^2
   +\tfrac{(p-2)(1+\eps)}{2}\tfrac{\|\langle X_s-Y_s,\sigma(s,X_s)-\sigma(s,Y_s)\rangle_H\|_{\HS(U,\R)}^2}{\|X_s-Y_s\|_H^2},0\}/\|X_s-Y_s\|_H^2\,ds<\infty
   $.
  Then
  \begin{enumerate}[(i)]
    \item  
     it holds for all $q_1,q_2\in(0,\infty]$ with $\tfrac{1}{q_1}=\tfrac{1}{q_2}+\tfrac{1}{p}$
    that
    \begin{equation}  \begin{split}\label{eq:perturbation.marginal}
      &\|X_{\tau}-Y_{\tau}\|_{L^{q_1}(\P;H)}
      \\&
      \leq 
      \bigg(\|X_0-Y_0\|_{L^p(\P;H)}^2
      \\&\qquad
      +2\int_0^{T} 
      \left\|\1_{[0,\tau]}(t)\big(\delta\|\mu(t,Y_t)-a_t\|_H^2
      +\tfrac{p-1}{2}(1+\tfrac{1}{\eps})\|\sigma(t,Y_t)-b_t\|_{\HS(U,H)}^2\big)
      \right\|_{L^{\frac{p}{2}}(\P;\R)}\,dt
      \bigg)^{\frac{1}{2}}
      \\&\quad
      \cdot
      \bigg\|\exp\bigg(\int_0^{\tau}
  \max\Big\{
    \tfrac{\langle X_t-Y_t,\mu(t,X_t)-\mu(t,Y_t) \rangle_{H}
  +\frac{1+\eps}{2}\left\|\sigma(t,X_t)-\sigma(t,Y_t)\right\|_{\HS(U,H)}^2
  }{\|X_t-Y_t\|_{H}^2}
  \\&\qquad\qquad\qquad\qquad\qquad
  +\tfrac{(p-2)(1+\eps)\|\langle X_t-Y_t,\sigma(t,X_t)-\sigma(t,Y_t)\rangle_{H}\|_{\HS(U,\R)}^2
  }{2\|X_t-Y_t\|_{H}^4}+\tfrac{1}{4\delta},
  0\Big\} \,dt
  \bigg)\bigg\|_{L^{q_2}(\P;\R)}
\end{split}     \end{equation}
and
\item
 it holds for all $q_1,q_2,q_3\in(0,\infty]$ with $q_3<p$ and $\tfrac{1}{q_1}=\tfrac{1}{q_2}+\tfrac{1}{q_3}$
    that
    \begin{equation}  \begin{split}\label{eq:perturbation.uniform}
      &\left\|\sup_{s\in[0,\tau]}\|X_{s}-Y_{s}\|_H\right\|_{L^{q_1}(\P;\R)}
      \\&
      \leq \left\|\left(\|X_0-Y_0\|_H^2+2\int_0^{\tau} \left|\delta\|\mu(t,Y_t)-a_t\|_H^2
      +\tfrac{p-1}{2}(1+\tfrac{1}{\eps})\|\sigma(t,Y_t)-b_t\|_{\HS(U,H)}^2\right|\,dt
      \right)^{\!\frac{1}{2}}
      \right\|_{L^{q_3}(\P;\R)}
      \\&\quad
      \cdot
     \bigg(\tfrac{p}{q_3}\smallint_{\frac{p-q_3}{q_3}}^{\infty}\tfrac{s^{\frac{q_3}{p}}}{(s+1)^2}\,ds+1\bigg)^{\frac{p}{2q_3}}
      \bigg\|\exp\bigg(\int_0^{\tau}
  \max\Big\{
    \tfrac{\langle X_t-Y_t,\mu(t,X_t)-\mu(t,Y_t) \rangle_{H}
  +\frac{1+\eps}{2}\left\|\sigma(t,X_t)-\sigma(t,Y_t)\right\|_{\HS(U,H)}^2
  }{\|X_t-Y_t\|_{H}^2}
  \\&\qquad\qquad\qquad\qquad\qquad\qquad\quad
  +\tfrac{(p-2)(1+\eps)\|\langle X_t-Y_t,\sigma(t,X_t)-\sigma(t,Y_t)\rangle_{H}\|_{\HS(U,\R)}^2
  }{2\|X_t-Y_t\|_{H}^4}+\tfrac{1}{4\delta},
  0\Big\} \,dt
  \bigg)\bigg\|_{L^{q_2}(\P;\R)}.
    \end{split}     \end{equation}
  \end{enumerate}

\end{cor}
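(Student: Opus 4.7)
The plan is to apply Corollary~\ref{c:moments:hilbert.m} to the difference process $Z_t := X_t - Y_t$. Subtracting~\eqref{eq:appl.X} from~\eqref{eq:appl.Y} we obtain $\P$-a.s.\ for all $t\in[0,T]$ that
\begin{equation*}
Z_{\min\{t,\tau\}} = Z_0 + \int_0^t \1_{[0,\tau]}(s)[\mu(s,X_s)-a_s]\,ds + \int_0^t \1_{[0,\tau]}(s)[\sigma(s,X_s)-b_s]\,dW_s,
\end{equation*}
so $Z$ fits into Setting~\ref{sett:moments.m} with drift $\mu(s,X_s)-a_s$ and diffusion $\sigma(s,X_s)-b_s$. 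It then suffices to verify condition~\eqref{eq:lin.growth.c} with the $\alpha$ and $\beta$ indicated by the statement; the two estimates then follow directly from items~\eqref{item:lingrowth1} and~\eqref{item:lingrowth2} of that corollary.

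The central step, and the one that I expect to be the main obstacle, is the verification of~\eqref{eq:lin.growth.c} for $Z$. The idea is to split $\mu(t,X_t)-a_t = [\mu(t,X_t)-\mu(t,Y_t)] + [\mu(t,Y_t)-a_t]$ and analogously for $\sigma$, and then to use Young's inequality
\begin{equation*}
\langle Z_t, \mu(t,Y_t)-a_t\rangle_H \leq \tfrac{1}{4\delta}\|Z_t\|_H^2 + \delta\|\mu(t,Y_t)-a_t\|_H^2
\end{equation*}
for the drift cross-term, and the elementary inequality $\|A+B\|^2 \leq (1+\eps)\|A\|^2 + (1+\tfrac{1}{\eps})\|B\|^2$ for the diffusion terms. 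Applying the latter both to $\|\sigma(t,X_t)-b_t\|_{\HS(U,H)}^2$ and, after splitting inside $\langle\cdot,\cdot\rangle_H$, to $\|\langle Z_t, \sigma(t,X_t)-b_t\rangle_H\|_{\HS(U,\R)}^2$, and then bounding the remainder of the cross-term via Cauchy-Schwarz ($\|\langle Z_t, \sigma(t,Y_t)-b_t\rangle_H\|_{\HS(U,\R)}^2 \leq \|Z_t\|_H^2\|\sigma(t,Y_t)-b_t\|_{\HS(U,H)}^2$), the diffusion contributions to the ``error'' collapse to
\begin{equation*}
\tfrac{1+\frac{1}{\eps}}{2}\|\sigma(t,Y_t)-b_t\|_{\HS(U,H)}^2 + \tfrac{(p-2)(1+\frac{1}{\eps})}{2}\|\sigma(t,Y_t)-b_t\|_{\HS(U,H)}^2 = \tfrac{p-1}{2}\bigl(1+\tfrac{1}{\eps}\bigr)\|\sigma(t,Y_t)-b_t\|_{\HS(U,H)}^2,
\end{equation*}
while the ``homogeneous'' contributions coming from $\mu(t,X_t)-\mu(t,Y_t)$ and $\sigma(t,X_t)-\sigma(t,Y_t)$ are collected into the bracket inside the $\max\{\cdot,0\}$ appearing in the exponential factor of the conclusion.

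With these bounds in place, set
\begin{equation*}
\alpha_t := \max\!\bigg\{\tfrac{\langle Z_t,\mu(t,X_t)-\mu(t,Y_t)\rangle_H + \frac{1+\eps}{2}\|\sigma(t,X_t)-\sigma(t,Y_t)\|_{\HS(U,H)}^2}{\|Z_t\|_H^2} + \tfrac{(p-2)(1+\eps)\|\langle Z_t,\sigma(t,X_t)-\sigma(t,Y_t)\rangle_H\|_{\HS(U,\R)}^2}{2\|Z_t\|_H^4} + \tfrac{1}{4\delta},\; 0\bigg\}
\end{equation*}
on $\{Z_t\neq 0\}$ (and $\alpha_t:=0$ otherwise, using the convention $0/0=0$), and set
\begin{equation*}
\beta_t^2 := 2\,\1_{[0,\tau]}(t)\Big(\delta\|\mu(t,Y_t)-a_t\|_H^2 + \tfrac{p-1}{2}\bigl(1+\tfrac{1}{\eps}\bigr)\|\sigma(t,Y_t)-b_t\|_{\HS(U,H)}^2\Big).
\end{equation*}
The integrability hypothesis on $\alpha$ is exactly the assumption in the statement. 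Condition~\eqref{eq:lin.growth.c} for $Z$ with these $\alpha_t,\beta_t$ follows by summing the bounds above (the maximum with $0$ is harmless since the original left-hand side is dominated by the same quantity without the maximum whenever the latter is nonnegative, while whenever it is negative the drift cross-terms still absorb into $\tfrac{1}{4\delta}\|Z_t\|_H^2 + \delta\|\mu(t,Y_t)-a_t\|_H^2$). Finally, plugging these choices into items~\eqref{item:lingrowth1} and~\eqref{item:lingrowth2} of Corollary~\ref{c:moments:hilbert.m}, pulling the exponential integrating factor to the left via H\"older, and observing that $\|\beta_s\|_{L^p(\P;\R)}^2$ is bounded by $2\|\delta\|\mu(s,Y_s)-a_s\|_H^2 + \tfrac{p-1}{2}(1+\tfrac{1}{\eps})\|\sigma(s,Y_s)-b_s\|_{\HS(U,H)}^2\|_{L^{p/2}(\P;\R)}$, yields~\eqref{eq:perturbation.marginal} and~\eqref{eq:perturbation.uniform} exactly as stated.
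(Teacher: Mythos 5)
Your proposal is correct and follows essentially the same route as the paper: apply Corollary~\ref{c:moments:hilbert.m} to the difference process $X-Y$, split the drift and diffusion increments into a ``homogeneous'' part (difference of $\mu$, resp.\ $\sigma$, along $X$ and $Y$) plus a ``perturbation'' part, use Young's inequality with weight $\delta$ for the drift cross-term and the inequality $\|A+B\|^2\leq(1+\eps)\|A\|^2+(1+\tfrac{1}{\eps})\|B\|^2$ (equivalently, Cauchy--Schwarz plus Young) for the two diffusion terms, and absorb everything into $\alpha_t$ (positive part, so it is $[0,\infty]$-valued) and $\beta_t^2=2\,\1_{[0,\tau]}(t)(\delta\|\mu(t,Y_t)-a_t\|_H^2+\tfrac{p-1}{2}(1+\tfrac{1}{\eps})\|\sigma(t,Y_t)-b_t\|_{\HS(U,H)}^2)$. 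Your justification for the $\max\{\cdot,0\}$ is a bit convoluted---all that is needed is that replacing the prefactor of $\|X_t-Y_t\|_H^2$ by its positive part can only weaken the inequality, which is automatic---but that does not affect the correctness of the argument.
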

\begin{proof}[Proof of Corollary~\ref{c:perturbation}]
  Without loss of generality we assume that
  \begin{equation}  \begin{split}\label{eq:assumeD}
    \P\Big(\int_0^{\tau}\eps\|\sigma(t,X_t)-\sigma(t,Y_t)\|_{\HS(U,H)}^2
    +\tfrac{1}{\eps}\|\sigma(t,Y_t)-b_t\|_{\HS(U,H)}^2
    +\delta\|\mu(t,Y_t)-a_t\|_H^2\,dt<\infty\Big)=1
  \end{split}     \end{equation}
  (otherwise the assertion is trivial).
  First, \eqref{eq:appl.X} and~\eqref{eq:appl.Y} imply that for all $t\in[0,T]$
  it holds $\P$-a.s.\ that
  \begin{equation}  \begin{split}\label{eq:perturbation.XY}
    X_{\min\{t,\tau\}}-Y_{\min\{t,\tau\}}
    =\int_0^t\1_{[0,\tau]}(s)(\mu(s,X_s)-a_s)\,ds
    +\int_0^t\1_{[0,\tau]}(s)(\sigma(s,X_s)-b_s)\,dW_s.
  \end{split}     \end{equation}
  Moreover, it holds for all $t\in[0,T]$ that
  \begin{equation}  \begin{split}
  &
  \tfrac{\langle X_t-Y_t,\mu(t,X_t)-a_t \rangle_{H}
  +\frac{1}{2}\|\sigma(t,X_t)-b_t\|^2_{\HS(U,H)}
  +\frac{p-2}{2}\frac{\|\langle X_t-Y_t,\sigma(t,X_t)-b_t\rangle_{H}\|_{\HS(U,\R)}^2
  }{\|X_t-Y_t\|_{H}^2}
  }{\|X_t-Y_t\|_{H}^2}
  \\&
  =
  \tfrac{\langle X_t-Y_t,\mu(t,X_t)-\mu(t,Y_t) \rangle_{H}
  +\tfrac{1}{2}\left\|\sigma(t,X_t)-\sigma(t,Y_t)\right\|_{\HS(U,H)}^2
  +\frac{p-2}{2}\frac{\|\langle X_t-Y_t,\sigma(t,X_t)-\sigma(t,Y_t)\rangle_{H}\|_{\HS(U,\R)}^2
  }{\|X_t-Y_t\|_{H}^2}
  }{\|X_t-Y_t\|_{H}^2}
  \\&\quad
  +
  \tfrac{\langle X_t-Y_t,\mu(t,Y_t)-a_t \rangle_{H}
  +\tfrac{1}{2}\left\|\sigma(t,Y_t)-b_t\right\|_{\HS(U,H)}^2
  +\langle \sigma(t,X_t)-\sigma(t,Y_t),\sigma(t,Y_t)-b_t\rangle_{\HS(U,H)}
  }{\|X_t-Y_t\|_{H}^2}
  \\&\quad
  +
  \tfrac{p-2}{2}\tfrac{
  \|\langle X_t-Y_t,\sigma(t,Y_t)-b_t\rangle_{H}\|_{\HS(U,\R)}^2
  +2\left\langle
      \langle X_t-Y_t,\sigma(t,X_t)-\sigma(t,Y_t)\rangle_{H},
      \langle X_t-Y_t,\sigma(t,Y_t)-b_t\rangle_{H}
  \right\rangle_{\HS(U,\R)}
  }{\|X_t-Y_t\|_{H}^4}.
  \end{split}     \end{equation}
  This, the Cauchy-Schwarz inequality, and Young's inequality
  yield for all $t\in[0,T]$ that
  \begin{equation}  \begin{split}
  &
  \tfrac{\langle X_t-Y_t,\mu(t,X_t)-a_t \rangle_{H}
  +\frac{1}{2}\|\sigma(t,X_t)-b_t\|^2_{\HS(U,H)}
  +\frac{p-2}{2}\frac{\|\langle X_t-Y_t,\sigma(t,X_t)-b_t\rangle_{H}\|_{\HS(U,\R)}^2
  }{\|X_t-Y_t\|_{H}^2}
  }{\|X_t-Y_t\|_{H}^2}
  \\&
  \leq
  \tfrac{\langle X_t-Y_t,\mu(t,X_t)-\mu(t,Y_t) \rangle_{H}
  +\frac{1+\eps}{2}\left\|\sigma(t,X_t)-\sigma(t,Y_t)\right\|_{\HS(U,H)}^2
  +\frac{p-2}{2}(1+\eps)\frac{\|\langle X_t-Y_t,\sigma(t,X_t)-\sigma(t,Y_t)\rangle_{H}\|_{\HS(U,\R)}^2
  }{\|X_t-Y_t\|_{H}^2}
  }{\|X_t-Y_t\|_{H}^2}+\tfrac{1}{4\delta}
  \\&\quad
  +
  \tfrac{\delta\|\mu(t,Y_t)-a_t \|_{H}^2
  +\frac{1}{2}(1+\frac{1}{\eps})\left\|\sigma(t,Y_t)-b_t\right\|_{\HS(U,H)}^2
  }{\|X_t-Y_t\|_{H}^2}
  +
  \tfrac{
  (p-2)(1+\frac{1}{\eps})\|\langle X_t-Y_t,\sigma(t,Y_t)-b_t\rangle_{H}\|_{\HS(U,\R)}^2
  }{2\|X_t-Y_t\|_{H}^4}
  \\&
  \leq
  \tfrac{\langle X_t-Y_t,\mu(t,X_t)-\mu(t,Y_t) \rangle_{H}
  +\frac{1+\eps}{2}\left\|\sigma(t,X_t)-\sigma(t,Y_t)\right\|_{\HS(U,H)}^2
  +\frac{p-2}{2}(1+\eps)\frac{\|\langle X_t-Y_t,\sigma(t,X_t)-\sigma(t,Y_t)\rangle_{H}\|_{\HS(U,\R)}^2
  }{\|X_t-Y_t\|_{H}^2}
  }{\|X_t-Y_t\|_{H}^2}+\tfrac{1}{4\delta}
  \\&\quad
  +
  \tfrac{\delta\|\mu(t,Y_t)-a_t \|_{H}^2
  +\frac{p-1}{2}(1+\frac{1}{\eps})\|\sigma(t,Y_t)-b_t\|_{\HS(U,H)}^2
  }{\|X_t-Y_t\|_{H}^2}.
  \end{split}     \end{equation}
  This, \eqref{eq:perturbation.XY}, \eqref{eq:assumeD},
  and Corollary~\ref{c:moments:hilbert.m}
  (applied
  with $X_t=X_{t}-Y_{t}$, $a_t=\mu(t,X_t)-a_t$, $b_t=\sigma(t,X_t)-b_t$,
  \begin{equation}  \begin{split}
  \alpha_t
 & =
  \max\Big\{\tfrac{\langle X_t-Y_t,\mu(t,X_t)-\mu(t,Y_t) \rangle_{H}
  +\frac{1+\eps}{2}\left\|\sigma(t,X_t)-\sigma(t,Y_t)\right\|_{\HS(U,H)}^2
  +\frac{p-2}{2}(1+\eps)\frac{\|\langle X_t-Y_t,\sigma(t,X_t)-\sigma(t,Y_t)\rangle_{H}\|_{\HS(U,\R)}^2
  }{\|X_t-Y_t\|_{H}^2}
  }{\|X_t-Y_t\|_{H}^2}
  \\&\qquad\qquad\qquad\qquad
  +\tfrac{1}{4\delta},0\Big\},
  \end{split}     \end{equation}
  $\beta_t=
  \sqrt{2}\big(\delta\|\mu(t,Y_t)-a_t \|_{H}^2
  +\frac{p-1}{2}(1+\frac{1}{\eps})\|\sigma(t,Y_t)-b_t\|_{\HS(U,H)}^2\big)^{\frac{1}{2}}
  $
  for all $t\in[0,T]$
  in the notation of
  Corollary~\ref{c:moments:hilbert.m})
  imply that
     it holds for all $q_1,q_2\in(0,\infty]$, $\delta\in(0,\infty)$ with 
     $\tfrac{1}{q_1}=\tfrac{1}{q_2}+\tfrac{1}{p}$
    that
    \begin{equation}  \begin{split}
      &\|X_{\tau}-Y_{\tau}\|_{L^{q_1}(\P;H)}
      \\&
      \leq 
      \bigg(\|X_0-Y_0\|_{L^p(\P;H)}^2
      \\&\qquad
      +2\int_0^{T} 
      \left\|\1_{[0,\tau]}(t)\big(\delta\|\mu(t,Y_t)-a_t\|_H^2
      +\tfrac{p-1}{2}\tfrac{1+\eps}{\eps}\|\sigma(t,Y_t)-b_t\|_{\HS(U,H)}^2\big)
      \right\|_{L^p(\P;\R)}^2\,dt
      \bigg)^{\frac{1}{2}}
      \\&\quad
      \cdot
      \bigg\|\exp\bigg(\int_0^{\tau}
  \max\Big\{
    \tfrac{\langle X_t-Y_t,\mu(t,X_t)-\mu(t,Y_t) \rangle_{H}
  +\frac{1+\eps}{2}\left\|\sigma(t,X_t)-\sigma(t,Y_t)\right\|_{\HS(U,H)}^2
  }{\|X_t-Y_t\|_{H}^2}
  \\&\qquad\qquad\qquad\qquad\qquad
  +\tfrac{(p-2)(1+\eps)\|\langle X_t-Y_t,\sigma(t,X_t)-\sigma(t,Y_t)\rangle_{H}\|_{\HS(U,\R)}^2
  }{2\|X_t-Y_t\|_{H}^4}+\tfrac{1}{4\delta},
  0\Big\} \,dt
  \bigg)\bigg\|_{L^{q_2}(\P;\R)}
\end{split}     \end{equation}
and it holds for all $q_1,q_2,q_3\in(0,\infty]$ with $q_3<p$ and $\tfrac{1}{q_1}=\tfrac{1}{q_2}+\tfrac{1}{q_3}$
    that
    \begin{equation}  \begin{split}
      &\left\|\sup_{s\in[0,\tau]}\|X_{s}-Y_{s}\|_H\right\|_{L^{q_1}(\P;\R)}
      \\&
      \leq \left\|\left(\|X_0-Y_0\|_H^2+2\int_0^{\tau} \delta\|\mu(t,Y_t)-a_t\|_H^2
      +\tfrac{p-1}{2}(1+\tfrac{1}{\eps})\|\sigma(t,Y_t)-b_t\|_{\HS(U,H)}^2\,dt
      \right)^{\!\frac{1}{2}}
      \right\|_{L^{q_3}(\P;\R)}
      \\&\quad
      \cdot
     \bigg(\tfrac{p}{q_3}\smallint_{\frac{p-q_3}{q_3}}^{\infty}\tfrac{s^{\frac{q_3}{p}}}{(s+1)^2}\,ds+1\bigg)^{\frac{p}{2q_3}}
      \bigg\|\exp\bigg(\int_0^{\tau}
  \max\Big\{
    \tfrac{\langle X_t-Y_t,\mu(t,X_t)-\mu(t,Y_t) \rangle_{H}
  +\frac{1+\eps}{2}\left\|\sigma(t,X_t)-\sigma(t,Y_t)\right\|_{\HS(U,H)}^2
  }{\|X_t-Y_t\|_{H}^2}
  \\&\qquad\qquad\qquad\qquad\qquad\qquad\quad
  +\tfrac{(p-2)(1+\eps)\|\langle X_t-Y_t,\sigma(t,X_t)-\sigma(t,Y_t)\rangle_{H}\|_{\HS(U,\R)}^2
  }{2\|X_t-Y_t\|_{H}^4}+\tfrac{1}{4\delta},
  0\Big\} \,dt
  \bigg)\bigg\|_{L^{q_2}(\P;\R)}.
    \end{split}     \end{equation}
  This completes the proof of Corollary~\ref{c:perturbation}.
\end{proof}
For example,
Corollary~\ref{c:perturbation} can be applied to prove
strong convergence rates for implementable approximations
of solutions of SDEs.
The classical (exponential) Euler approximations diverge
in the strong
and weak sense for most one-dimensional SDEs with
super-linearly growing coefficients
(see \cite{hjk11,HutzenthalerJentzenKloeden2013})
and also for some SPDEs (see Beccari et al.\cite{BHJKLS2019}).
It was shown in \cite{hjk12,HutzenthalerJentzen2015}
that minor modifications of the Euler method -- so called
tamed Euler methods -- avoid this divergence problem;
see also the Euler-type methods, e.g., in
\cite{BeynIsaakKruse2016,
BloemkerSchillingsWacker2018,
BrosseDurmusMoulinesSabanis2018,
ChassagneuxJacquierMihaylov2016,
ChenGanWang2018,
DiazJerez2017linear,
GuoLiuMao2018,
HanMaDing2019,
HuLiMao2018,
KumarKumar2018,
LanXia2018,
LiuMao2013,
MoraEtAl2017,
NgoLuong2017,
SabanisZhang2019,
WangGan2013,
ZhanJiangLiu2018,
ZhouEtAl2016,
ZongWuHuang2014}.
Now, analogously to Hutzenthaler \& Jentzen~\cite{HutzenthalerJentzen2014},
Corollary~\ref{c:perturbation} is a powerful tool
to establish uniform strong convergence rates
(in combination with
exponential moment estimates for suitably
tamed Euler approximations, e.g., Hutzenthaler et al.~\cite{HutzenthalerJentzenWang2018}).

\subsubsection*{Acknowledgement}
This project has been partially supported 
by the Deutsche Forschungsgesellschaft (DFG) via 
RTG 2131 {\it High-dimensional Phenomena in Probability -- Fluctuations and Discontinuity}.


\end{document}